\newcommand{\qm}{\bullet}
\renewcommand\theenumi{\@roman\c@enumi}
\renewcommand\theenumiii{\@Roman\c@enumiii}
\definecolor{draftmargin}{rgb}{0.7,0,0}
\newcommand{\looped}[1]{{#1}^\circ}
\DeclareMathOperator{\Chain}{Chain}
\def\real{\abs}
\newcommand{\ug}{{\mathbf 1}}  
\newcommand{\Poset}{\mbox{\upshape Poset}}
\DeclareMathOperator{\coind}{coind}
\DeclareMathOperator{\ind}{ind}
\DeclareMathOperator{\conn}{conn}
\DeclareMathOperator{\Ht}{ht}
\newcommand{\htt}{\Ht_{\Z_2}}
\newcommand{\Cpm}{{\mathcal C_{2m}^1}}
\newcommand{\Cm}{{\mathcal C_{2m}}}
\newcommand{\Cptx}[1]{{\mathcal C_{#1}^1}}
\begin{document}

\theoremstyle{plain}
    \newtheorem{thm}{Theorem}[section]
    \newtheorem*{thm*}{Theorem}
    \newtheorem{prop}[thm]{Proposition}
    \newtheorem{lemma}[thm]{Lemma}
    \newtheorem{lem}[thm]{Lemma}
    \newtheorem{conj}[thm]{Conjecture}
    \newtheorem{cor}[thm]{Corollary}

\theoremstyle{definition}
    \newtheorem{defn}[thm]{Definition}
    \newtheorem{nota}[thm]{Notation}

\theoremstyle{remark}
    \newtheorem{rem}[thm]{Remark}
    \newtheorem{example}[thm]{Example}
    \newtheorem{question}[thm]{Question}

\title{Topology of Hom complexes and test graphs\\
for bounding chromatic number}
\date{July 29, 2009}



\author{Anton Dochtermann and Carsten Schultz \\
\small Technische Universit\"at Berlin, MA 6-2 \\[-0.8ex]
\small Stra\ss e des 17. Juni 136, 10623 Berlin, Germany\\[-0.8ex]
\small {\tt anton.dochtermann@gmail.com, carsten@codimi.de} }

\maketitle

\begin{abstract}
The $\Hom$ complex of homomorphisms between two graphs was originally introduced to provide topological lower bounds on chromatic number.  In this paper we introduce new methods for understanding the topology of $\Hom$ complexes, mostly in the context of $\Gamma$-actions on graphs and posets (for some group $\Gamma$).  We view the $\Hom(T,\qm)$ and $\Hom(\qm,G)$ complexes as functors from graphs to posets, and introduce a functor $(\qm)^1$ from posets to graphs obtained by taking atoms as vertices.  Our main structural results establish useful interpretations of the equivariant homotopy type of $\Hom$ complexes in terms of spaces of equivariant poset maps and $\Gamma$-twisted products of spaces.  When $P \deq F(X)$ is the face poset of a simplicial complex $X$, this provides a useful way to control the topology of $\Hom$ complexes.  These constructions generalize those of the second author from \cite{Schspace} as well as the calculation of the homotopy groups of $\Hom$ complexes from \cite{DocGro}.

Our foremost application of these results is the construction of new families of \emph{test graphs} with arbitrarily large chromatic number - graphs $T$ with the property that the connectivity of $\Hom(T,G)$ provides the best possible lower bound on the chromatic number of $G$.  In particular we focus on two infinite families, which we view as higher dimensional analogues of odd cycles.  The family of \emph{spherical graphs} have connections to the notion of \emph{homomorphism duality}, whereas the family of \emph{twisted toroidal graphs} lead us to establish a weakened version of a conjecture (due to Lov\'{a}sz) relating topological lower bounds on chromatic number to maximum degree.  Other structural results allow us to show that any finite simplicial complex $X$ with a free action by the symmetric group $S_n$ can be approximated up to $S_n$-homotopy equivalence as $\Hom(K_n,G)$ for some graph $G$; this is a generalization of the results of Csorba from \cite{Cs05} for the case of $n=2$.  We conclude the paper with some discussion regarding the underlying categorical notions involved in our study.
\end{abstract}

\section{Introduction}

\subsection{Some background}

In his 1978 proof of the Kneser conjecture, Lov\'{a}sz \cite{Lov78} showed that
the chromatic number of a graph is bounded below by the connectivity
of (a complex later shown to be homotopy equivalent to) $\Hom(K_2, G)$, a space of homomorphisms from the edge $K_2$ into $G$.  Some 25 years later, Babson and Kozlov \cite{BKpro} were able to show that the
connectivity of $\Hom(C_{2r+1}, G)$ provided the next natural bound
on the chromatic number of $G$ (here $C_{2r+1}$ is an odd cycle), answering in the affirmative a conjecture of
Lov\'{a}sz.  When $G$ is a loopless graph (as is the case when we consider the
chromatic number), both $\Hom(K_2, G)$ and $\Hom(C_{2r+1}, G)$ are naturally
free ${\mathbb Z}_2$-spaces, and one can consider a list of
numerical invariants that measure the complexity of this action. The
now standard proof of the Lov\'{a}sz criterion gives the following
result.

\begin{thm}[Lov\'{a}sz] \label{thm:Lovtheorem}
 For every graph~$G$,
\begin{center}
$\ind_{{\mathbb Z}_2} \Hom(K_2, G) \leq \chi(G) - 2$.
\end{center}
\end{thm}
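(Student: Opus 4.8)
The plan is to derive the bound from two standard properties of the $\mathbb{Z}_2$-index --- its monotonicity under equivariant maps and the Borsuk--Ulam fact $\ind_{{\mathbb Z}_2}S^{k}=k$ --- together with the functoriality of $\Hom(K_2,\qm)$ in its second argument.

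First I would reduce to the complete graph. Set $n=\chi(G)$; by definition of the chromatic number there is a graph homomorphism $c\colon G\to K_n$. Applying the functor $\Hom(K_2,\qm)$ produces a continuous map $\Hom(K_2,G)\to\Hom(K_2,K_n)$, and since the $\mathbb{Z}_2$-action is induced by the flip automorphism of the first argument (hence is natural in the second), this map is $\mathbb{Z}_2$-equivariant; because $G$ and $K_n$ are loopless, both $\Hom$ complexes are free $\mathbb{Z}_2$-spaces. Monotonicity of the index then gives $\ind_{{\mathbb Z}_2}\Hom(K_2,G)\le\ind_{{\mathbb Z}_2}\Hom(K_2,K_n)$, so it remains to show $\ind_{{\mathbb Z}_2}\Hom(K_2,K_n)\le n-2$.

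Next I would bound the index of $\Hom(K_2,K_n)$ by constructing a $\mathbb{Z}_2$-map to $S^{n-2}$ equipped with the antipodal action. The cells of $\Hom(K_2,K_n)$ are indexed by ordered pairs $(A,B)$ of disjoint nonempty subsets of $[n]$, with the flip acting by $(A,B)\mapsto(B,A)$, and a point of such a cell is a pair $(\mu,\nu)$ of probability vectors in $\mathbb{R}^n$ supported on $A$ and $B$ respectively. The assignment $(\mu,\nu)\mapsto\mu-\nu$ is compatible with the face relations, hence well defined and continuous on the whole complex; it lands in the hyperplane $\{x\in\mathbb{R}^n:\sum_i x_i=0\}\cong\mathbb{R}^{n-1}$, never equals the origin since $A$ and $B$ are disjoint, and is equivariant for the antipodal action. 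Normalizing, one obtains a $\mathbb{Z}_2$-map $\Hom(K_2,K_n)\to S^{n-2}$, whence $\ind_{{\mathbb Z}_2}\Hom(K_2,K_n)\le n-2$. (In fact this map is a $\mathbb{Z}_2$-homotopy equivalence, but only the stated inequality is needed.)

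I do not expect a serious obstacle: the geometric content is concentrated in the last paragraph, and once the prodsimplicial model of $\Hom(K_2,K_n)$ is set up, checking that $(\mu,\nu)\mapsto\mu-\nu$ respects faces and is equivariant, and that looplessness makes the $\mathbb{Z}_2$-actions free, is routine. The only nonelementary input is Borsuk--Ulam, which we take as known. This is precisely the \emph{standard} proof of the Lov\'{a}sz criterion referred to above.
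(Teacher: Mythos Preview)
Your argument is correct and is precisely the ``now standard proof of the Lov\'asz criterion'' that the paper alludes to before stating the theorem; the paper does not supply its own proof, treating the result as known background. In particular, the paper later uses the $\mathbb{Z}_2$-homeomorphism $\Hom(K_2,K_n)\cong_{\mathbb{Z}_2}\mathbb{S}^{n-2}$ as a fact, which is exactly what your explicit map $(\mu,\nu)\mapsto(\mu-\nu)/\lVert\mu-\nu\rVert$ establishes.
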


Here, for a free ${\mathbb Z}_2$-space $X$,  $\ind_{{\mathbb Z}_2}
X$ is the smallest dimension of a sphere with the antipodal action
that $X$ maps into equivariantly.  Since $\conn (X) + 1 \leq
\ind_{{\mathbb Z}_2}X$, this implies the original result of
Lov\'{a}sz from \cite{Lov78}.

As a way to take advantage of the ${\mathbb Z}_2$-topology, Babson
and Kozlov introduced the use of characteristic classes to the
study of $\Hom$ complexes.  In \cite{BKpro} they proposed and partially (according
to the parity of $\chi(G)$) proved the following result
incorporating the ${\mathbb Z}_2$-action on the $\Hom$ complex.

\begin{thm} \label{thm:strong}
For every graph~$G$,
\begin{center}
$\Ht_{{\mathbb Z}_2}\Hom(C_{2r+1}, G) \leq \chi(G)-3$.
\end{center}
\end{thm}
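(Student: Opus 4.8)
The plan is to reduce to complete graphs and then extract the bound from the $\mathbb{Z}_2$-equivariant cohomology of $\Hom(C_{2r+1},K_n)$; the reductions are formal, and the last dimension is where the work lies.

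\textbf{Reduction to $K_n$.} Put $n=\chi(G)$ and fix a homomorphism $\phi\colon G\to K_n$. The functor $\Hom(C_{2r+1},-)$ sends $\phi$ to a map $\Hom(C_{2r+1},G)\to\Hom(C_{2r+1},K_n)$; since the $\mathbb{Z}_2$-action on both sides comes from the reflection $\sigma$ of $C_{2r+1}$ that fixes one vertex $v_0$ and flips the opposite edge $e=\{v_r,v_{r+1}\}$, and postcomposition commutes with this action, the induced map is $\mathbb{Z}_2$-equivariant. As $\Ht_{\mathbb{Z}_2}$ is monotone under $\mathbb{Z}_2$-maps, it suffices to prove $\Ht_{\mathbb{Z}_2}\Hom(C_{2r+1},K_n)\le n-3$. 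This action is free: a fixed point would give a homomorphism of $C_{2r+1}/\sigma$ — a path with a loop attached at the image of $e$ — into the loopless graph $K_n$, which is impossible. Consequently $\Ht_{\mathbb{Z}_2}$ of this space is the usual characteristic-class invariant, and the goal is the single statement $\varpi^{n-2}=0$ in $H^*(\Hom(C_{2r+1},K_n)/\mathbb{Z}_2;\mathbb{Z}_2)$, where $\varpi$ classifies the double cover.

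\textbf{The cheap bound.} The edge $e$ together with the swap of its endpoints is a $\mathbb{Z}_2$-equivariant subgraph $K_2\hookrightarrow C_{2r+1}$, and restriction along it gives a $\mathbb{Z}_2$-map $\rho\colon\Hom(C_{2r+1},K_n)\to\Hom(K_2,K_n)$. Since $\Hom(K_2,K_n)$ is $\mathbb{Z}_2$-homotopy equivalent to $S^{n-2}$ with the antipodal action, Theorem~\ref{thm:Lovtheorem} applied to $K_n$ together with monotonicity already gives $\Ht_{\mathbb{Z}_2}\Hom(C_{2r+1},K_n)\le n-2$. The point of Theorem~\ref{thm:strong} is to do one better, and the improvement cannot be read off from $\rho$: every coloring of $e$ extends around $C_{2r+1}$, so $\rho$ is surjective and $n-2$ is all one can get by comparison with $\Hom(K_2,K_n)$.

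\textbf{The crux.} To win the last dimension I would use the global shape of $\Hom(C_{2r+1},K_n)$, organised by writing $C_{2r+1}$ as the pushout $P\cup_{\partial}P$ of two copies of the half-path $P=v_0v_1\cdots v_rv_{r+1}$ glued along $\partial=\{v_0\}\sqcup e$. Applying $\Hom(-,K_n)$ turns this into a $\sigma$-twisted fibre product of $\Hom(P,K_n)$ with itself over $\Hom(\partial,K_n)=V(K_n)\times\Hom(K_2,K_n)$, where $\mathbb{Z}_2$ acts trivially on $V(K_n)$ (because $v_0$ is $\sigma$-fixed) and antipodally on $\Hom(K_2,K_n)\simeq S^{n-2}$ — precisely a $\mathbb{Z}_2$-twisted product of the kind whose Borel construction is accessible. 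Since a path is combinatorially very simple, $\Hom(P,K_n)$ and its two restriction maps $\Hom(P,K_n)\to V(K_n)$ and $\Hom(P,K_n)\to\Hom(K_2,K_n)$ can be described directly, and a Mayer--Vietoris / Gysin computation for the associated Borel constructions should show that the twisting induced by the cycle closing up along $e$ forces the vanishing $\varpi^{n-2}=0$ that a single edge cannot see. (The original route is Babson and Kozlov's filtration of $\Hom(C_{2r+1},K_n)$ and its spectral sequence, computing the relevant graded piece of $H^*$ of the Borel construction directly.)

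\textbf{Main obstacle.} Everything but the crux is formal. The crux is the genuine difficulty: the crude edge restriction loses by exactly one, and recovering the final dimension requires controlling a nontrivial graded component of the cohomology ring of $\Hom(C_{2r+1},K_n)/\mathbb{Z}_2$ (equivalently, making the twisting in the fibre-product description explicit). That this is delicate is already evident in Babson and Kozlov's argument, which established the bound only for one parity of $\chi(G)$; a uniform proof needs either a full computation of that graded component or a conceptual equivariant decomposition that makes $\varpi^{n-2}=0$ manifest independently of parity.
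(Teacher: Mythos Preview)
Your proposal is not a proof: you correctly isolate the reduction to $K_n$ and the cheap bound $n-2$, but at the ``crux'' you only indicate a direction (a fibre-product description plus a Mayer--Vietoris/Gysin argument) and explicitly concede that this is where the work lies and that Babson--Kozlov's version of precisely this computation got stuck on one parity. So as it stands there is a genuine gap --- the step that actually gains the last dimension is missing.

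The paper's route sidesteps this computation entirely, and the idea is worth knowing because it is what you almost wrote down and then discarded. Your map $\rho$ restricts to the single flipped edge $e\subset C_{2r+1}$; the right move is to use \emph{all} edges at once. Concretely, $\Hom(K_2,C_{2r+1})\simeq_{\mathbb Z_2\times\mathbb Z_2}\mathbb S^1_b$ (left action from $\Aut(K_2)$, right action from the reflection of $C_{2r+1}$), and composition of multihomomorphisms gives a $\mathbb Z_2$-map
\[
\mathbb S^1_b\times_{\mathbb Z_2}\bigl|\Hom(C_{2r+1},G)\bigr|
\;\longrightarrow_{\mathbb Z_2}\;
\bigl|\Hom(K_2,G)\bigr|.
\]
Your map $\rho$ is the restriction of this to a single fibre. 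The extra circle factor is exactly the missing dimension: \prettyref{prop:Sk} gives
$\htt\bigl(\mathbb S^1_b\times_{\mathbb Z_2}X\bigr)\ge\htt X+1$,
so monotonicity of $\htt$ under the map above yields
$\htt\Hom(C_{2r+1},G)+1\le\htt\Hom(K_2,G)\le\chi(G)-2$,
uniformly in the parity of $\chi(G)$ and without ever touching the cohomology ring of $\Hom(C_{2r+1},K_n)/\mathbb Z_2$. In short, instead of trying to show $\varpi^{n-2}=0$ inside the quotient, one transports the question back to $\Hom(K_2,G)$ at the cost of a twisted $\mathbb S^1$-factor that \prettyref{prop:Sk} converts into the desired $+1$.
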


Here, for a free ${\mathbb Z}_2$-space $X$, $\Ht_{{\mathbb Z}_2}(X)$
is the highest nonvanishing power of the first Stiefel-Whitney class
of the bundle ${\mathbb R} \times_{{\mathbb Z}_2} X \rightarrow X/{\mathbb Z}_2$ (called the \emph{height} or sometimes the \emph{cohomological index} of $X$). Again, since $\conn(X) + 1 \leq \Ht_{{\mathbb
Z}_2}$X, this implies the connectivity bound that they did succeed
in proving completely.

The first complete proof of \prettyref{thm:strong} was given by the second author in \cite{Sch}.  More recently, in \cite{Schspace}, the
same author was able to prove the following statement which not only
implies \prettyref{thm:strong} but also provides insight into the
structure of the $\Hom$ complexes and suggests extensions that are
the theme to this paper.

\begin{thm}[Schultz] \label{thm:limitSch}
For every graph~$G$,
\begin{center}
$\colim_{\text{r}} \Hom(C_{2r+1}, G) \simeq_{{\mathbb Z}_2}
\Map_{{\mathbb Z}_2}\bigl({\mathbb S}^1_b, \Hom(K_2, G)\bigr)$.
\end{center}
\end{thm}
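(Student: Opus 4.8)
The plan is to realise the functor $\Hom(C_{2r+1},\qm)$, up to $\mathbb{Z}_2$-homotopy, as a functor of ``equivariant maps out of a combinatorial circle into $\Hom(K_2,\qm)$'', and then to let $r\to\infty$ by equivariant simplicial approximation on the circle. Throughout, the relevant free $\mathbb{Z}_2$-action on $\Hom(C_{2r+1},G)$ is the one induced by the reflection automorphism $\sigma$ of $C_{2r+1}$ fixing a single vertex; I would first record that this action is free whenever $G$ is loopless (a $\sigma$-invariant homomorphism would have to send the two endpoints of the edge of $C_{2r+1}$ opposite the fixed vertex to a single vertex of $G$), and fix once and for all a $\sigma$-equivariant coherent family of folds $C_{2R+1}\to C_{2r+1}$ for $R\ge r$ to serve as the structure maps of the colimit, so that everything in sight is $\mathbb{Z}_2$-equivariant.

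For fixed $r$ I would produce a natural $\mathbb{Z}_2$-homotopy equivalence between $\Hom(C_{2r+1},G)$ and a space of equivariant poset maps into $\Hom(K_2,G)$. Writing $S_{2r+1}$ for the boundary of the $(2r+1)$-gon as a simplicial complex, one has $C_{2r+1}=F(S_{2r+1})^1$ in the notation of this paper (the atoms of the face poset are the vertices, joined when they span an edge). Feeding this into the structural results relating $\Hom\bigl(F(X)^1,G\bigr)$ to spaces of poset maps into $\Hom(K_2,G)$ gives such an identification non-equivariantly; to obtain the correct $\mathbb{Z}_2$ I would pass to the orientation double cover $\widetilde S_{2r+1}$ of $S_{2r+1}$ — a $(4r+2)$-gon with free antipodal action and quotient $S_{2r+1}$ — since it is exactly the two ways of orienting an edge that produce the swap $\mathbb{Z}_2$ on $\Hom(K_2,G)$. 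Concretely one can route this through the exponential adjunction $\Hom(K_2\times T,G)\cong\Hom(T,G^{K_2})$ together with the isomorphism $C_{4r+2}\cong K_2\times C_{2r+1}$ (valid because $2r+1$ is odd), realising $\Hom(C_{2r+1},G)$ as the deck-fixed part of $\Hom(C_{4r+2},G)$ and unwinding the bipartite variable of $C_{4r+2}$ into the edge variable of $\Hom(K_2,G)$; the lift of $\sigma$ to $\widetilde S_{2r+1}$ then descends to the desired free action on the mapping space. The outcome I would aim for is a natural $\mathbb{Z}_2$-homotopy equivalence
\[
\Hom(C_{2r+1},G)\ \simeq_{\mathbb{Z}_2}\ \Map_{\mathbb{Z}_2}\bigl(\widetilde S_{2r+1},\,\Hom(K_2,G)\bigr),
\]
in which the source is a combinatorial circle with $O(r)$ cells carrying its free $\mathbb{Z}_2$-action.

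Then I would take the colimit. Under the identification above the chosen folds $C_{2R+1}\to C_{2r+1}$ correspond to $\mathbb{Z}_2$-subdivisions $\widetilde S_{2R+1}\to\widetilde S_{2r+1}$ of the circle, so the tower on the right-hand side is, up to equivariant homotopy, a cofinal system of finer and finer $\mathbb{Z}_2$-CW structures on a single free circle $\mathbb{S}^1_b$. Equivariant simplicial approximation on the circle — one only ever subdivides the source $\mathbb{S}^1_b$, never the target $\Hom(K_2,G)$ — shows that every continuous $\mathbb{Z}_2$-map $\mathbb{S}^1_b\to|\Hom(K_2,G)|$ is carried by a poset map out of some $\widetilde S_{2r+1}$, and likewise for equivariant homotopies; hence the natural map
\[
\colim_{r}\,\Hom(C_{2r+1},G)\ \simeq_{\mathbb{Z}_2}\ \colim_{r}\,\Map_{\mathbb{Z}_2}\bigl(\widetilde S_{2r+1},\Hom(K_2,G)\bigr)\ \longrightarrow\ \Map_{\mathbb{Z}_2}\bigl(\mathbb{S}^1_b,\Hom(K_2,G)\bigr)
\]
is a $\mathbb{Z}_2$-weak equivalence, and a $\mathbb{Z}_2$-homotopy equivalence after replacing the colimit by the mapping telescope of the (cofibration) structure maps. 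This is the step where passing to the colimit is genuinely needed: for a single $r$ the two sides are not equivalent (already for $G=K_m$ the left side depends on $r$ while the right side does not).

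I expect the main obstacle to be the second step: pinning down the equivariant mapping-space model with the \emph{correct} $\mathbb{Z}_2$-action and doing so naturally in $r$. The action needed on the $\Hom$-side is free but is not induced by any free action on $S_{2r+1}$, so one is forced onto the double cover $\widetilde S_{2r+1}$ with its two commuting involutions (the deck transformation and a lifted reflection), and must verify carefully that the deck involution is precisely the edge-orientation $\mathbb{Z}_2$ of $\Hom(K_2,G)$ while the lifted reflection induces the free action matching $\sigma$. The other delicate point is naturality: one must arrange the structural identification to intertwine the folding maps with honest subdivisions of the combinatorial circle, rather than with maps that are merely equivariantly homotopic to subdivisions, since it is this compatibility that makes the colimit statement both meaningful and provable. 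Granting \prettyref{thm:limitSch}, one recovers \prettyref{thm:strong}, since the $\mathbb{Z}_2$-cohomological index of $\Map_{\mathbb{Z}_2}(\mathbb{S}^1_b,X)$ is controlled by that of $X$.
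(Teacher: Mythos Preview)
This theorem is not proved in the present paper; it is quoted from \cite{Schspace} as background, and the paper's own contribution is the generalisation to the new families $S_{k,m}$ and $T_{k,m}$ via \prettyref{thm:firstentry}. So there is no ``paper's proof'' to compare against directly, but your proposal can be measured against the machinery the paper develops, and there it has a genuine gap.

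Your first identification, $C_{2r+1}=F(S_{2r+1})^1$, is false: by \prettyref{def:1} the graph $P^1$ is always reflexive (every atom $x$ satisfies $x\le x$, hence $x\sim x$), so $F(S_{2r+1})^1$ is the \emph{looped} cycle $C'_{2r+1}$, not the loopless $C_{2r+1}$. More seriously, the structural result you want to invoke is \prettyref{thm:firstentry}, which identifies $\Poset_{\Z_2}(P,\Hom(K_2,G))$ with $\Hom(K_2\times_{\Z_2}P^1,G)$; for this to produce $\Hom(C_{2r+1},G)$ you would need $C_{2r+1}\cong K_2\times_{\Z_2}P^1$. That is impossible: for $K_2\times_{\Z_2}P^1$ to be loopless the $\Z_2$-action on $P^1$ must be free (a fixed looped vertex yields a loop in the quotient), and then the quotient has $\lvert V(P^1)\rvert$ vertices, an even number, while $\lvert V(C_{2r+1})\rvert=2r+1$ is odd.

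Your double-cover workaround does not repair this. Taking $P=F(\widetilde S_{2r+1})$ with the antipodal action gives $K_2\times_{\Z_2}C'_{4r+2}$, which one computes to be the circulant graph on $4r+2$ vertices with connection set $\{2r,2r+1,2r+2\}$ (for $r=1$ this is the triangular prism), not $C_{2r+1}$; it is $3$-regular and has no folds, so there is no evident equivalence $\Hom(C_{2r+1},G)\simeq\Hom(K_2\times_{\Z_2}C'_{4r+2},G)$. The alternative route you sketch through $C_{4r+2}\cong K_2\times C_{2r+1}$ and the exponential adjunction lands you in $\Hom(C_{2r+1},G^{K_2})$ with the target-side $\Z_2$-action, whose fixed points recover only $\Hom(C_{2r+1},G)$ again --- the argument becomes circular and never produces a model of the form $\Poset_{\Z_2}(\text{circle},\Hom(K_2,G))$.

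The argument that actually works (and which this paper alludes to just after stating the theorem) does not try to write $C_{2r+1}$ as $K_2\times_{\Z_2}P^1$. Instead it uses the $(\Z_2\times\Z_2)$-equivalence $\Hom(K_2,C_{2r+1})\simeq\Sphere^1_b$ together with the composition map
\[
\Hom(K_2,C_{2r+1})\times_{\Z_2}\Hom(C_{2r+1},G)\longrightarrow\Hom(K_2,G)
\]
to produce, by adjunction, a natural $\Z_2$-map $\Hom(C_{2r+1},G)\to\Map_{\Z_2}(\Sphere^1_b,\Hom(K_2,G))$; equivariant simplicial approximation on the circle then shows this becomes an equivalence in the colimit over $r$. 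Your final paragraph (the colimit/simplicial approximation step) is fine; it is the construction of the comparison map at each finite stage that needs to be replaced.
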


The direct system that defines the colimit is obtained by applying
the $\Hom(\qm,G)$ functor to the system $\cdots \rightarrow C_{2r+3}
\rightarrow C_{2r+1} \rightarrow \cdots$.  Here ${\mathbb S}^k_b$
denotes the $k$-sphere with the ${\mathbb Z}_2$-antipodal action on the
left and the ${\mathbb Z}_2$-reflection action on the right (which
in total can be considered an action of ${\mathbb Z}_2 \times
{\mathbb Z}_2$).  One then observes that $\Hom(K_2,
C_{2r+1}) \simeq_{{\mathbb Z}_2 \times {\mathbb Z}_2} {\mathbb
S}^1_b$, where the action on the first space is induced by the
nonidentity automorphism of $K_2$ and by the reflection of $C_{2r+1}$ that flips an edge.  The relevance of ${\mathbb S}^k_b$
in this context is exhibited in the following result, a proof of which can be found in \cite{Schspace}.

\begin{prop}\label{schultzprop}\label{prop:Sk}
If $X$ is a ${\mathbb Z}_2$-space and $k\ge0$, then $\Ht_{{\mathbb
Z}_2}(\mathbb{S}^k_b \times_{{\mathbb Z}_2} X) \geq \Ht_{{\mathbb
Z}_2} X + k$.
\end{prop}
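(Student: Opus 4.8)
The plan is to reduce the statement to a computation in Borel cohomology with $\mathbb{Z}_2$-coefficients, to identify the relevant cohomology ring by a Gysin sequence, and to conclude with a short piece of commutative algebra. Write $G=\mathbb{Z}_2$ and let $\Gamma=\langle a\rangle\times\langle r\rangle\cong\mathbb{Z}_2\times\mathbb{Z}_2$ be the group acting on $\mathbb{S}^k_b$, with $a$ the antipodal map and $r$ the reflection. Since $\Ht_{\mathbb{Z}_2}$ is an invariant of free $\mathbb{Z}_2$-spaces I may assume $X$ is free; put $h=\Ht_{\mathbb{Z}_2}X$ and $u=w_1\in H^1(X/\mathbb{Z}_2;\mathbb{Z}_2)$, so that $u^h\ne0$. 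The key observation is that $\mathbb{S}^k_b$ is the unit sphere $S(W)$ of the $(k+1)$-dimensional real $\Gamma$-representation $W=L_{ar}\oplus L_a^{\oplus k}$, where $L_a$ (resp.\ $L_{ar}$) is the pullback of the sign representation of $\Gamma/\langle r\rangle$ (resp.\ of $\Gamma/\langle ar\rangle$); indeed, in coordinates $\mathbb{R}^{k+1}=\mathbb{R}\oplus\mathbb{R}^{k}$ one has $a=-\mathrm{id}$ and $r=(-1)\oplus\mathrm{id}_{\mathbb{R}^k}$.

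First I would reduce to a cohomological statement. Since $X$ is free, every nontrivial element of $\Gamma$ moves the $X$-coordinate, so $\Gamma$ acts freely on $\mathbb{S}^k_b\times X$, and the residual $\langle a\rangle$-action on $Y:=\mathbb{S}^k_b\times_{\mathbb{Z}_2}X$ is again free. Writing $H^*(B\Gamma;\mathbb{Z}_2)=\mathbb{Z}_2[t_a,t_r]$ with $t_a,t_r$ dual to the two factors, one has
\[
 H^*_G(Y;\mathbb{Z}_2)\ \cong\ H^*\bigl((\mathbb{S}^k_b\times X)/\Gamma;\mathbb{Z}_2\bigr)\ \cong\ H^*_\Gamma(\mathbb{S}^k_b\times X;\mathbb{Z}_2),
\]
and this isomorphism carries the first Stiefel--Whitney class of the double cover $Y\to Y/G$ to the image of $t_a$ under $H^*(B\Gamma)\to H^*_\Gamma(\mathbb{S}^k_b\times X)$. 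So it suffices to show that $t_a^{\,h+k}\ne0$ in $H^*_\Gamma(\mathbb{S}^k_b\times X;\mathbb{Z}_2)$.

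Next I would compute this ring by means of the Gysin sequence of the equivariant $S^k$-bundle $(\mathbb{S}^k_b\times X)_{h\Gamma}\to X_{h\Gamma}$ associated with $W$. Its Euler class is $e(W)=e(L_{ar})\,e(L_a)^{k}=(t_a+t_r)\,t_a^{\,k}$ (with $e(L_a)=t_a$, $e(L_{ar})=t_a+t_r$ the $w_1$'s of these line bundles), pulled back to $H^*_\Gamma(X)$. Since $\langle a\rangle$ acts trivially on $X$ while $\langle r\rangle$ acts freely, $H^*_\Gamma(X;\mathbb{Z}_2)\cong\mathbb{Z}_2[t_a]\otimes H^*(X/\mathbb{Z}_2;\mathbb{Z}_2)$ with $t_r\mapsto u$. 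A short verification --- the variable $t_a$ is regular, and $t_a+u$ is regular in a polynomial ring over a graded ring by a top-degree coefficient argument --- shows that $(t_a+u)\,t_a^{\,k}$ is a non-zero-divisor, so the Gysin sequence collapses to a ring isomorphism
\[
 H^*_\Gamma(\mathbb{S}^k_b\times X;\mathbb{Z}_2)\ \cong\ \bigl(\mathbb{Z}_2[t_a]\otimes H^*(X/\mathbb{Z}_2;\mathbb{Z}_2)\bigr)\big/\bigl((t_a+u)\,t_a^{\,k}\bigr).
\]

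Finally I would carry out the algebra. We must show $t_a^{\,h+k}$ is not in the ideal $\bigl((t_a+u)\,t_a^{\,k}\bigr)$; cancelling the regular factor $t_a^{\,k}$, this reduces to $t_a^{\,h}\notin\bigl((t_a+u)\bigr)$. Writing a hypothetical relation $t_a^{\,h}=(t_a+u)\sum_i b_i\,t_a^{\,i}$ with $b_i\in H^*(X/\mathbb{Z}_2)$ and comparing coefficients of powers of $t_a$ forces, from the top degree, that the sum has degree $h-1$ with $b_{h-1}=1$; then inductively $b_{h-1-m}=u^{m}$; and the constant-term equation now gives $u^{h}=0$, contradicting $u^{h}\ne0$ (for $h=0$ one instead notes that $t_a+u$ is not a unit, and the case $h=\infty$ follows from the same coefficient argument). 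Therefore $t_a^{\,h+k}\ne0$ and $\Ht_{\mathbb{Z}_2}(\mathbb{S}^k_b\times_{\mathbb{Z}_2}X)\ge h+k$. I expect the main work to lie in the third step: correctly identifying the equivariant Euler class $e(W)$ and the ring $H^*_\Gamma(X)$ --- in particular tracking which degree-one class of $B\Gamma$ restricts to $u$ and which one governs $\Ht_G(Y)$ --- and verifying the non-zero-divisor property that lets the Gysin sequence split cleanly; once the ring has been pinned down, the concluding computation is essentially forced.
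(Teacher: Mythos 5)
Your proof is correct. Note that the paper itself does not prove \prettyref{prop:Sk} but cites~\cite{Schspace} for it; your Borel‑cohomology/Gysin argument is the natural route and, I believe, the one taken there. Let me record the two small points worth tightening. First, the identification of $W$: with $\mathbb{S}^k_b=S(W)$, $a=-\mathrm{id}$ and $r=(-1)\oplus\mathrm{id}_{\mathbb{R}^k}$, the first coordinate has both $a,r$ acting by $-1$ (kernel $\langle ar\rangle$) and the remaining $k$ have only $a$ acting by $-1$, so $W=L_{ar}\oplus L_a^{\oplus k}$ and $e(W)=(t_a+t_r)t_a^k$ — all correct. Second, the sentence ``every nontrivial element of $\Gamma$ moves the $X$‑coordinate'' is not literally true: $a$ acts trivially on $X$. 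What is true, and what you need, is that $a$ acts freely on $\mathbb{S}^k_b$ (antipodally), while $r$ and $ar$ act freely on $X$; so $\Gamma$ acts freely on the product and the residual $\langle a\rangle$‑action on $Y$ is free. The rest — the Künneth computation $H^*_\Gamma(X)\cong\mathbb{Z}_2[t_a]\otimes H^*(X/\mathbb{Z}_2)$ with $t_r\mapsto u$, the pulled‑back Euler class $(t_a+u)t_a^k$, the non‑zero‑divisor check (for $t_a$ it is immediate; for $t_a+u$ compare top coefficients), the resulting ring isomorphism from the Gysin sequence, and the coefficient‑matching argument showing $t_a^{h}\notin(t_a+u)$ because otherwise $u^h=0$ — is all right, including the edge cases $h=0$ and $h=\infty$. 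One could also note that $X$ must be free for both heights in the statement to be defined in the paper's sense, so your reduction to free $X$ is not a loss of generality but a necessary hypothesis.
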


\noindent One can then combine these observations to obtain the
following corollary which, when combined with \prettyref{thm:Lovtheorem}, implies \prettyref{thm:strong}.
\begin{cor}
If $G$ is a graph with at least one edge, then
\begin{center}
$\Ht_{{\mathbb Z}_2} \Hom(C_{2r+1}, G) + 1 \leq \Ht_{{\mathbb Z}_2}
\Hom(K_2, G)$.
\end{center}
\end{cor}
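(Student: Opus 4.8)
The plan is to realize $\Hom(K_2,G)$, up to an equivariant map, as a twisted product of the form ${\mathbb S}^1_b\times_{{\mathbb Z}_2}\Map_{{\mathbb Z}_2}({\mathbb S}^1_b,\Hom(K_2,G))$, so that the gain of $+1$ is supplied by \prettyref{prop:Sk}, while the left-hand side is fed into the picture through \prettyref{thm:limitSch}. More precisely, I would reduce the statement to the two inequalities
\[
\Ht_{{\mathbb Z}_2}\Hom(C_{2r+1},G)\ \le\ \Ht_{{\mathbb Z}_2}\Map_{{\mathbb Z}_2}\bigl({\mathbb S}^1_b,\Hom(K_2,G)\bigr)
\quad\text{and}\quad
\Ht_{{\mathbb Z}_2}\Map_{{\mathbb Z}_2}\bigl({\mathbb S}^1_b,Y\bigr)+1\ \le\ \Ht_{{\mathbb Z}_2}Y,
\]
the second applied to $Y=\Hom(K_2,G)$ (a nonempty free ${\mathbb Z}_2$-space, since $G$ has an edge), using throughout that $\Ht_{{\mathbb Z}_2}$ is monotone along ${\mathbb Z}_2$-maps and invariant under ${\mathbb Z}_2$-homotopy equivalence.

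The first inequality is immediate from \prettyref{thm:limitSch}: the graph $C_{2r+1}$ is a term of the direct system, so $\Hom(C_{2r+1},G)$ maps ${\mathbb Z}_2$-equivariantly into $\colim_r\Hom(C_{2r+1},G)\simeq_{{\mathbb Z}_2}\Map_{{\mathbb Z}_2}({\mathbb S}^1_b,\Hom(K_2,G))$, and one applies monotonicity of $\Ht_{{\mathbb Z}_2}$. (Equivalently, one uses the comparison map $\varphi\mapsto\Hom(K_2,\varphi)$ from $\Hom(C_{2r+1},G)$ into $\Map_{{\mathbb Z}_2}\bigl(\Hom(K_2,C_{2r+1}),\Hom(K_2,G)\bigr)$ together with the ${\mathbb Z}_2\times{\mathbb Z}_2$-equivalence $\Hom(K_2,C_{2r+1})\simeq_{{\mathbb Z}_2\times{\mathbb Z}_2}{\mathbb S}^1_b$; here the ${\mathbb Z}_2$ over which the mapping space is taken corresponds to $\operatorname{Aut}(K_2)$, i.e.\ the antipodal factor of ${\mathbb S}^1_b$, and the retained ${\mathbb Z}_2$-action, coming from the edge-flipping reflection of $C_{2r+1}$, corresponds to the reflection factor and matches the ${\mathbb Z}_2$-action on $\Hom(C_{2r+1},G)$.)

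The second inequality is the heart of the matter. Here I would use the evaluation map ${\mathbb S}^1_b\times\Map_{{\mathbb Z}_2}({\mathbb S}^1_b,Y)\to Y$, $(s,f)\mapsto f(s)$. Every $f$ in the mapping space intertwines the antipodal action on ${\mathbb S}^1_b$ with the ${\mathbb Z}_2$-action on $Y$; consequently the evaluation map is invariant under the diagonal of the reflection action on ${\mathbb S}^1_b$ and the retained action on the mapping space, and it carries the residual antipodal action on the ${\mathbb S}^1_b$-factor to the ${\mathbb Z}_2$-action on $Y$. It therefore descends to a ${\mathbb Z}_2$-map
\[
e\colon\ {\mathbb S}^1_b\times_{{\mathbb Z}_2}\Map_{{\mathbb Z}_2}\bigl({\mathbb S}^1_b,Y\bigr)\ \longrightarrow\ Y,
\]
with the twisted product and the ${\mathbb Z}_2$-action on its source exactly as in \prettyref{prop:Sk}. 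Monotonicity of $\Ht_{{\mathbb Z}_2}$ gives $\Ht_{{\mathbb Z}_2}\bigl({\mathbb S}^1_b\times_{{\mathbb Z}_2}\Map_{{\mathbb Z}_2}({\mathbb S}^1_b,Y)\bigr)\le\Ht_{{\mathbb Z}_2}Y$, while \prettyref{prop:Sk} with $k=1$ gives $\Ht_{{\mathbb Z}_2}\bigl({\mathbb S}^1_b\times_{{\mathbb Z}_2}\Map_{{\mathbb Z}_2}({\mathbb S}^1_b,Y)\bigr)\ge\Ht_{{\mathbb Z}_2}\Map_{{\mathbb Z}_2}({\mathbb S}^1_b,Y)+1$. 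Chaining these proves the second inequality, and combining it with the first proves the corollary.

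The step I expect to be the main obstacle is the bookkeeping of the two commuting ${\mathbb Z}_2$-actions on ${\mathbb S}^1_b$ --- antipodal versus reflection --- and checking that in each step the correct copy is quotiented out and the correct copy retained: that the edge-flipping reflection of $C_{2r+1}$ (the action appearing in the statement) really corresponds to the retained reflection action on the mapping space, and that the evaluation map $e$ is genuinely well defined and equivariant for the residual antipodal action. Both points rest on the identification $\Hom(K_2,C_{2r+1})\simeq_{{\mathbb Z}_2\times{\mathbb Z}_2}{\mathbb S}^1_b$ and on the defining equivariance of the maps in $\Map_{{\mathbb Z}_2}({\mathbb S}^1_b,Y)$; once the conventions are fixed, the rest is a formal chase through monotonicity of the cohomological index and \prettyref{prop:Sk}.
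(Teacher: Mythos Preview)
Your proposal is correct and matches the intended argument: the paper does not spell out a proof here but simply says to ``combine these observations'' (Theorem~\ref{thm:limitSch}, the identification $\Hom(K_2,C_{2r+1})\simeq_{\Z_2\times\Z_2}\Sphere^1_b$, and Proposition~\ref{prop:Sk}), and your two inequalities do exactly that. Your parenthetical alternative---composing multihomomorphisms to get $\Sphere^1_b\times_{\Z_2}\Hom(C_{2r+1},G)\to\Hom(K_2,C_{2r+1})\times_{\Z_2}\Hom(C_{2r+1},G)\to\Hom(K_2,G)$ directly---is in fact the cleaner route (no colimit or mapping space needed) and is precisely the form the paper uses when it later proves the analogous statements explicitly (Proposition~\ref{prop:main} and Theorem~\ref{thm:myc-tg}).
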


\prettyref{thm:limitSch} is similar in spirit to the results and
constructions involved in the first author's paper \cite{DocGro}, where
it is shown that the homotopy groups of a related space
$\Hom_*(T,G)$ can be determined by certain graph theoretic closed
paths in a (pointed) graph~$G^T$. In this context the role of a closed path
(a circle) is played by $C'_m$, a cycle of length $m$ with loops on all the vertices (the looped 1-skeleton of a
triangulated circle).  One constructs a graph~$\Omega G$ which parameterizes graph homomorphisms $C^\prime_m \rightarrow G$ from cycles of arbitrary length.  In particular it is shown that the space of (pointed) maps from a circle into $\Hom_*(K_2,G)$ can be recovered as path components of $\Hom_*(K_2, \Omega G)$, which in turn can be approximated by the spaces $\Hom_*(K_2 \times C^\prime_m, G)$.

\subsection{New results}\label{sec:results}

In this paper, we generalize the constructions discussed above by
showing that one can take graphs obtained as the 1-skeleton of
topologically `desirable' spaces and apply them directly to $\Hom$
complexes.  In this way we unify existing results regarding $\Hom$
complexes as well as provide new theorems and constructions.  In our
applications, the spaces of interest will primarily be spheres with a
$({\mathbb Z}_2 \times {\mathbb Z}_2)$-action given by the
antipodal/reflection maps.  One obtains a graph by taking a looped
1-skeleton of a triangulation of the space, which can then be utilized
in the context of the $\Hom$ complex.  More generally, if $P$ is a
poset we obtain a graph $P^1$ (see \prettyref{def:1}) whose vertices are the atoms
of $P$ with adjacency $x \sim y$ if and only if there exists a $z$
such that $z \geq x$ and $z \geq y$.  In the case that $P$ is the face
poset of a triangulation of the 1-sphere, we recover the graphs $C^\prime_m$ discussed
above.  The graphs obtained as $P^1$ have loops on every vertex and
hence do not admit homomorphisms to graphs with finite chromatic
number.  However, our results show the ${\mathbb
  Z}_2$-product of such graphs with a given ${\mathbb Z}_2$-graph~$T$
(for example $T = K_2$) interacts well with the relevant
$\Hom$ complexes.  Our two main structural results are the following.
All necessary definitions are provided in the
next section.

\begin{thm}\label{thm:firstentry}
Let $\Gamma$ be a finite group, and suppose $P$ is a poset with a left $\Gamma$-action and $T$ is a graph with a right $\Gamma$-action.  Then for any graph~$G$ there is a natural homotopy equivalence
\[\Poset_{\Gamma}\bigl(P, \Hom(T,G)\bigr) \simeq \Hom\bigl(T
\times_{\Gamma} P^1, G\bigr).\]
\end{thm}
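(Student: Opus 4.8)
The plan is to establish the homotopy equivalence by first producing an adjunction at the level of sets — a natural bijection between $\Gamma$-equivariant poset maps $P \to \Hom(T,G)$ and graph homomorphisms $T \times_\Gamma P^1 \to G$ — and then upgrading this to a homotopy equivalence of the associated mapping spaces. The key observation is that $\Hom(T,G)$ is a poset whose elements are multi-homomorphisms $\eta$ assigning to each vertex $t$ of $T$ a nonempty subset $\eta(t) \subseteq V(G)$ such that adjacent vertices get ``completely connected'' subsets. Given an equivariant poset map $f \colon P \to \Hom(T,G)$, I would first restrict attention to the atoms $x$ of $P$ (the vertices of $P^1$): each $f(x)$ is an element of $\Hom(T,G)$, i.e.\ a multi-homomorphism, and one can then define a map on the product $T \times P^1$ by sending $(t,x)$ to (a vertex in) $f(x)(t)$, using the adjacency structure of $P^1$ — if $x \sim y$ in $P^1$ via some $z \geq x, y$, then monotonicity of $f$ forces $f(x), f(y) \leq f(z)$, which gives the required edge condition in $G$. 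The $\Gamma$-equivariance of $f$ is exactly what is needed to descend this map from $T \times P^1$ to the quotient $T \times_\Gamma P^1$.

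The main technical content is checking that this assignment is well-defined in both directions and is actually a bijection. In one direction, given $\phi \colon T \times_\Gamma P^1 \to G$, I would build $f \colon P \to \Hom(T,G)$ not just on atoms but on all of $P$: for an arbitrary $p \in P$, set $f(p)(t) = \{\, \phi(t,x) : x \text{ an atom}, \ x \leq p \,\}$. One must verify this is a legitimate element of $\Hom(T,G)$ — i.e.\ that for an edge $t \sim t'$ in $T$ and any atoms $x \leq p$, $x' \leq p$, the vertices $\phi(t,x)$ and $\phi(t',x')$ are adjacent in $G$ — and here the crucial point is that $x$ and $x'$ are comparable-below $p$ hence $x \sim x'$ in $P^1$ (witnessed by $p$), so $(t,x) \sim (t',x')$ in $T \times P^1$ (using the product graph structure, since also $t \sim t'$), and $\phi$ carries this edge to an edge of $G$. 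Monotonicity of $f$ in $p$ is immediate from the formula. I would then check the two composites are identity: this is where one uses that $P$ has enough atoms and that poset maps into $\Hom(T,G)$ are determined by their values on atoms together with the monotonicity constraint, plus the observation that $f(z)$ for $z$ covering two atoms $x,y$ records precisely the ``join'' data needed.

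To pass from a bijection of sets to a homotopy equivalence of spaces, I would note that $\Poset_\Gamma(P, \Hom(T,G))$ is the topological space (realization of the poset, or the order complex) of equivariant poset maps, and likewise $\Hom(T \times_\Gamma P^1, G)$ carries its standard poset/CW topology; the bijection above is in fact an isomorphism of posets once both sides are ordered appropriately (pointwise order on maps corresponds to the order on multi-homomorphisms), so it induces a homeomorphism of the order complexes, which is stronger than needed. Naturality in $G$ follows because every construction above is defined by pushing forward along the graph homomorphism $G \to G'$, and the formulas manifestly commute with postcomposition. Alternatively, if one prefers to work with the homotopy-theoretic mapping space rather than the poset of maps, one invokes the general principle (used in \cite{Schspace}) that for finite posets the realization of the internal hom computes the derived mapping space; I would cite that rather than reprove it.

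The step I expect to be the main obstacle is verifying that the formula $f(p)(t) = \{\phi(t,x) : x \leq p \text{ an atom}\}$ genuinely lands in $\Hom(T,G)$ for \emph{all} $p$, not just atoms — in particular that distinct atoms below a common $p$ are always adjacent in $P^1$, which is true but relies on the precise definition of $P^1$ (adjacency $x \sim y$ iff some $z \geq x,y$) and on $P$ being such that this works (one may need $P$ to have no issues like atoms below $p$ with no common upper bound other than $p$ — but $p$ itself is the common upper bound, so this is fine). A subtler point is the behavior of the product graph $T \times P^1$ on loops: since $P^1$ has a loop at every vertex, $(t,x) \sim (t',x)$ whenever $t \sim t'$, and $(t,x) \sim (t,y)$ whenever $x \sim y$; one must make sure the $\Gamma$-twisted product $T \times_\Gamma P^1$ is formed correctly (as the quotient of the categorical product graph by the diagonal $\Gamma$-action) and that the equivariance bookkeeping matches up — this is routine but is where sign/side conventions (left vs.\ right actions) must be handled with care.
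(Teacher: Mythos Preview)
Your overall strategy---build explicit poset maps in both directions and show they give a homotopy equivalence---is sound, and in fact close to what the paper does.  But the central claim in your write-up is wrong: the correspondence is \emph{not} a bijection, let alone a poset isomorphism.  Concretely, with your maps
\[
f\longmapsto \alpha_f,\qquad \alpha_f([t,x])=f(x)(t)\quad(x\text{ an atom of }P),
\]
\[
\alpha\longmapsto f_\alpha,\qquad f_\alpha(p)(t)=\bigcup_{\substack{x\le p\\ x\text{ atom}}}\alpha([t,x]),
\]
you do get $\alpha_{f_\alpha}=\alpha$, but the other composite only satisfies
\[
f_{\alpha_f}(p)(t)=\bigcup_{\substack{x\le p\\ x\text{ atom}}} f(x)(t)\ \subseteq\ f(p)(t),
\]
i.e.\ $f_{\alpha_f}\le f$, not $f_{\alpha_f}=f$.  (Take $P=\{a,b<c\}$, $T=\mathbf 1$, $G$ the reflexive~$K_3$, $f(a)=\{1\}$, $f(b)=\{2\}$, $f(c)=\{1,2,3\}$: then $f_{\alpha_f}(c)=\{1,2\}\ne f(c)$.)  So ``check the two composites are identity'' fails, and the claimed homeomorphism of order complexes does not hold.

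The fix is that you do not need a bijection: having $\alpha_{f_\alpha}=\alpha$ and $f_{\alpha_f}\le f$ exhibits $\Hom(T\times_\Gamma P^1,G)$ as the image of a closure map on $\Poset_\Gamma(P,\Hom(T,G))$, hence as a strong deformation retract.  That is exactly the mechanism the paper uses.  The paper factors the equivalence through the exponential graph~$G^T$ in three closure-map steps,
\[
\Hom(T\times_\Gamma P^1,G)\ \simeq\ \Hom_\Gamma(P^1,G^T)\ \simeq\ \Poset_\Gamma\bigl(P,\Hom(\mathbf 1,G^T)\bigr)\ \simeq\ \Poset_\Gamma\bigl(P,\Hom(T,G)\bigr),
\]
whereas your direct maps compose these into one step; once you replace ``bijection'' by ``closure map'' your argument is essentially the composite of theirs.  (A minor related wobble: your parenthetical ``(a vertex in) $f(x)(t)$'' suggests $\alpha_f$ should be a graph homomorphism; it should be a multi-homomorphism with $\alpha_f([t,x])=f(x)(t)$ as a \emph{set}.)
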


\begin{thm}\label{thm:secondentry}
Let $\Gamma$ be a finite group, $G$ a graph, $T$ a graph with a right
$\Gamma$-action, and $P$ a poset with a free left $\Gamma$-action.  Let $d$ be the minimal diameter of a spanning tree in $G$.
If the induced action on the graph $P^1$ is
$\left(\max\set{5,d+2}\right)$-discontinuous
we have a natural homotopy equivalence
\[\real{\Hom(G, T\times_\Gamma P^1)} \simeq \real{\Hom(G, T)}\times_\Gamma
\real{\Hom(G, P^1)}.\]
\end{thm}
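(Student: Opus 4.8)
My approach would be to exhibit $H:=T\times_\Gamma P^1$ as the base of a $\Gamma$-Galois graph covering, to lift that covering to $\Hom$ complexes, and then to use that $\Hom(G,-)$ respects categorical products. Concretely: form $\tilde H:=T\times P^1$ with the diagonal $\Gamma$-action (the left action on $P^1$ being converted to a right one in the usual way), so that $H=\tilde H/\Gamma$ with quotient homomorphism $p\colon\tilde H\to H$. One may assume $G$ connected (the spanning-tree hypothesis presupposes this) and with at least one edge, since otherwise all three $\Hom$ complexes are contractible. Note too that the $\Gamma$-action on $\tilde H$ refines the one on $P^1$ through the projection, hence is $n$-discontinuous whenever the action on $P^1$ is.

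Two preliminaries. (a) $\Hom(G,-)$ turns products into products up to natural equivalence: the poset endomorphism $r$ of $\Hom(G,\tilde H)$ given by $r(\eta)(v)=\pi_T(\eta(v))\times\pi_{P^1}(\eta(v))$ satisfies $r(\eta)\geq\eta$ and retracts $\Hom(G,\tilde H)$ onto the subcomplex of product-type multihomomorphisms, which is exactly $\Hom(G,T)\times\Hom(G,P^1)$; the standard lemma on such poset maps makes the inclusion a $\Gamma$-equivariant homotopy equivalence, so $\real{\Hom(G,\tilde H)}\simeq_\Gamma\real{\Hom(G,T)}\times\real{\Hom(G,P^1)}$. (b) Since $G$ has an edge $\{u,v\}$ and the action on $P^1$ is $n$-discontinuous with $n\geq5$, no nontrivial $g$ stabilizes a cell $\eta$ of $\Hom(G,P^1)$: then $x\in\eta(u)$ and $gx\in\eta(u)$ would both be $P^1$-adjacent to every element of the nonempty set $\eta(v)$, forcing $d_{P^1}(x,gx)\leq2$. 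Hence $\Gamma$ acts freely on $\real{\Hom(G,P^1)}$ and on $\real{\Hom(G,\tilde H)}$, so the symbols $\times_\Gamma$ in the statement are honest orbit spaces; the same estimate, needing only $n\geq3$, shows $p$ is injective on $\eta(v)$ for every cell $\eta$ and every vertex $v$.

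The crux is a lifting lemma: $p_*\colon\Hom(G,\tilde H)\to\Hom(G,H)$ is a $\Gamma$-covering, meaning $\Gamma$ acts freely on the source and, for each cell $\eta$, $p_*$ restricts to an order-isomorphism from the cells comparable with $\eta$ onto the cells comparable with $p_*\eta$. Granting this, $\real{p_*}$ is a locally trivial covering map with deck group $\Gamma$, so $\real{\Hom(G,H)}\cong\real{\Hom(G,\tilde H)}/\Gamma$, and combining with (a) gives, naturally in $G$,
\[\real{\Hom(G,T\times_\Gamma P^1)}\ \simeq\ \bigl(\real{\Hom(G,T)}\times\real{\Hom(G,P^1)}\bigr)\big/\Gamma\ =\ \real{\Hom(G,T)}\times_\Gamma\real{\Hom(G,P^1)}.\]
To prove the lemma I would fix a spanning tree $S$ of $G$ of diameter $d$, first check (using $n\geq3$) that $p$ is a covering of graphs — a local bijection on neighbourhoods, since a non-injectivity on the edges at a vertex would place two points of a $\Gamma$-orbit at distance at most $2$ — and observe that the comparable-cell statement is immediate for sub-multihomomorphisms (they lift uniquely since $p$ is injective on each $\eta(v)$) and reduces, for super-multihomomorphisms, to the assertion that every cell $\bar\eta$ of $\Hom(G,H)$ has exactly $|\Gamma|$ lifts, forming one $\Gamma$-orbit. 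This last assertion I would prove by propagating a single chosen vertex-lift $x_0$ outward along $S$: the lifted value at a vertex $w$ is to consist of the lifts of the elements of $\bar\eta(w)$ that are $\tilde H$-adjacent to all of the value already built at the parent of $w$ (with the values at the root and one chosen child bootstrapped together from $x_0$). The discontinuity enters in exactly two places. It makes this well defined: two lifts $z_1,z_2$ of one vertex of $H$, each adjacent respectively to a member $q_1$, $q_2$ of a common already-built value, with $q_1,q_2$ sharing a lifted neighbour $x$, are joined by the length-four path $z_1\sim q_1\sim x\sim q_2\sim z_2$, hence coincide when $n\geq5$ — and by construction consecutive values are completely adjacent across each tree edge, so this is the pattern governing well-definedness at every step. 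And it forces the propagated lift $\eta$ to satisfy the non-tree edge conditions automatically: for a non-tree edge $\{u,v\}$ with $d_S(u,v)=\ell\leq d$ and any $x\in\eta(u)$, $y\in\eta(v)$, chaining through the constructed values along the $S$-path from $u$ to $v$ joins $y$ to the unique lift $y^*$ of $p(y)$ adjacent to $x$ by a path of length $\ell+1\leq d+1$ in $\tilde H$, so $y^*=y$ when $n\geq d+2$.

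I expect the lifting lemma to be the main obstacle: getting the bookkeeping of the propagation right (which vertex-lifts are forced and why the forced choices are globally consistent), handling the base vertex cleanly, and checking that everything is uniform over all cells of $\Hom(G,H)$ and natural in $G$. It is exactly here that the full hypothesis $n\geq\max\{5,d+2\}$ is consumed, the two constants being forced by the length-four configuration inside a single cell and the length-$(d+1)$ configuration across a non-tree edge. By contrast, the product decomposition of (a) and the descent to orbit spaces should be routine once the freeness of the $\Gamma$-actions has been secured.
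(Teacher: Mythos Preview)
Your proposal is correct and is essentially the paper's proof: the paper establishes the poset isomorphism $\Hom(G,\tilde H)/\Gamma\cong\Hom(G,H)$ via exactly your spanning-tree lifting argument, consuming the length-$4$ bound for consistency within a cell and the length-$\le d{+}1$ bound for the non-tree edges (its Proposition~5.5), uses the same closure-map retraction onto product-type multihomomorphisms (its Lemma~5.7, your~(a)), and then passes to realizations via a lemma that $\real{\,\cdot\,}$ commutes with quotients by strongly regular free actions (its Lemma~5.4). Your covering-space language is a mild repackaging of the same content, and the two places where the discontinuity hypothesis $n\ge\max\{5,d+2\}$ is spent are identical in both arguments.
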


\begin{rem}
For the notion of $m$-discontinuity see \prettyref{def:discont}.  If
the action on~$P$ is free, then by \prettyref{lem:discont-subdiv} the action on the
poset $\Chain^{k} P$ is $2^k$-discontinuous.  We will usually apply \prettyref{thm:secondentry} in situations where we have $\real{\Hom\bigl(G, (\Chain^k P)^1 \bigr)}\homot\real{\Chain^k P}\homeo \real P$.
For example, \prettyref{cor:ldismantlable} gives sufficient conditions.
\end{rem}

\begin{rem}\label{rem:homotopy}
When we refer to a homotopy equivalence between posets $P$ and~$Q$ as
in \prettyref{thm:firstentry}, we will mean a homotopy equivalence
$\real P\to\real Q$ such that the homotopy equivalence as well as its
homotopy inverse are induced by poset maps.  In
\prettyref{sec:enrich} we suggest a category~$\mathcal P_0$ such
that $P\homot Q$ could refer to an isomorphism in that category.
\end{rem}

Loosely speaking \prettyref{thm:firstentry} says that if $P$ is a
topological space with a $\Gamma$-action (in the form of its face
poset), one can describe the space of $\Gamma$-equivariant maps from
$P$ into the complex $\Hom(T,G)$ in terms of the space of graph
homomorphisms from the graph $T \times_\Gamma P^1$ into $G$.  This provides a basic link between equivariant topology and the existence of graph
homomorphisms and explains our interest in the graphs $T \times_\Gamma
P^1$.  \prettyref{thm:secondentry} then allows us to study the space of
graph homomomorphisms to such a graph, and also describes the
(equivariant) topology of certain fiber bundles involving the spaces
$\Hom(G,\qm)$ in terms of $\Hom$ complexes from $G$ into these twisted graph
products.

\prettyref{thm:firstentry} and \prettyref{thm:secondentry} lead us to a number of applications.  We provide the details for these in \prettyref{sec:testgraphs}, \prettyref{sec:furtherapps} and \prettyref{sec:univ}, but wish to briefly describe the ideas here.  Our foremost application of \prettyref{thm:firstentry} in \prettyref{sec:testgraphs} will be related to the construction of new \textit{test graphs}.  Following \cite{Kchr}, we say that a graph~$T$ is a
\textit{homotopy test graph} if for all graphs $G$ we have
\[\chi(G) \geq \conn\bigl(\Hom(T,G)\bigr) + \chi(T) + 1.\]
Here $\conn(X)$ is the topological connectivity of the space $X$.  A graph~$T$ with a ${\mathbb Z}_2$-action that flips an edge is called a \textit{Stiefel-Whitney test graph} if for all $G$ with $\Hom(T,G) \neq \emptyset$ we have
\[\chi(G) \geq \Ht_{{\mathbb Z}_2} \bigl(\Hom(T,G)\bigr) + \chi(T).\]
In this language the results of Lov\'{a}sz, Babson-Kozlov, and the second author say that the edge $K_2$ and the odd cycles $C_{2r+1}$ are Stiefel-Whitney test graphs.  We point out that the constant $\chi(T)$ is best possible since if we take $T = G$ we get that $\Hom(G,G)$ non-empty and hence ($-1$)-connected.

To build new test graphs, we take $P$ in \prettyref{thm:firstentry} to be the face poset of a (properly subdivided) $k$-sphere.  We then recover the space of equivariant maps from the $k$-sphere ${\mathbb S}^k$ into the complex $\Hom(T,G)$ as a colimit of the complexes $\Hom(T \times_{{\mathbb Z}_2} P^1, G)$
(details are below). As a consequence we see that if $T$ is a Stiefel-Whitney test graph with certain additional properties (satisfied for example by $K_2$ and $C_{2r+1}$), then so is $T \times_{{\mathbb Z}_2} P^1$; in addition, certain topological invariants (e.g. connectivity) of $\Hom(T,G)$ are closely related to those of $\Hom(T \times_{{\mathbb Z}_2} P^1, G)$.  As discussed above, the odd cycles $C_{2r+1}$ form a directed family of test graphs with the property that the topology of $\Hom(C_{2r+1}, G)$ can be related to that of $\Hom(K_2, G)$.  We view our results as a generalization of this phenomenon, with the graphs $K_2 \times_{{\mathbb Z}_2} P^1$ serving as `higher-dimensional' analogues of the odd cycles.

In particular this gives us a general inductive procedure for constructing new test graphs of arbitrary chromatic number: one starts with a test graph $T$ and repeatedly applies the construction $\qm \times_{{\mathbb Z}_2} P^1$, for $P$ the face poset of a properly divided $k$-sphere.  In this paper we focus our attention on two new infinite families of test graphs, each parameterized by a pair $(k,m)$.  The parameter $k$ is related to the chromatic number of the test graph, whereas $m$ is a measure of its `fineness'.  Details are provided in \prettyref{sec:testgraphs} but we wish to give a brief description of these families here.

The collection of \emph{spherical graphs}, denoted $S_{k,m}$, are obtained as follows.  We let $X^k_m$ denote the $m$th barycentric subdivision of the boundary of the regular $(k+1)$-dimensional cross polytope, and let $F(X^k_m)$ denote its face poset.  We then define $S_{k,m} \deq K_2 \times_{{\mathbb Z}_2} \bigl(F(X^k_m)\bigr)^1$ to be the graph obtained by taking the twisted product of $K_2$ with the reflexive graph of the 1-skeleton of $X^k_m$.  For each $k$, the map of posets $F(X^k_{m+1}) \rightarrow F(X^k_{m})$ induces graph homomorphisms $S_{k,m+1} \rightarrow S_{k,m}$.  We will see in \prettyref{sec:testgraphs} that for each $k \geq 0$ the graphs $S_{k,m}$ are Stiefel-Whitney test graphs with chromatic number $k+3$, and in addition they will play a role in a generalized notion of \emph{homomorphism duality} discussed in \prettyref{sec:duality}.

We obtain the \emph{twisted toroidal graphs}, denoted $T_{k,m}$, by repeatedly taking twisted products with graphs obtained from subdivisions of a circle.  In this case the relevant posets have a simple combinatorial description, and to emphasize this we introduce some new notation.  For $m \geq 2$ we let ${\mathcal C}_{2m}$ denote the face poset of a $2m$-gon; it will be these posets that we use for $P$ in \prettyref{thm:firstentry}.  The graphs $\Cpm$ form a linear direct system which, for each $k \geq 0$, again leads to a collection of Stiefel-Whitney test graphs with chromatic number $k+3$.  The graphs $T_{k,m}$ have the property that their maximum degree is independent of $m$ (analogous to the fact that all odd cycles have maximum degree 2), and this leads to partial progress towards a conjecture of Lov\'{a}sz regarding bounds on chromatic number in terms of connectivity of test graphs of bounded degree.

We conclude \prettyref{sec:testgraphs} with a study of a family of graphs obtained from the \emph{generalized Mycielski} construction.  In particular we use our methods to show that the graphs obtained this way provide another family of test graphs with arbitrarily large chromatic number.

In \prettyref{sec:univ} we discuss our primary application of \prettyref{thm:secondentry}, namely the notion of $S_n$-universality for $\Hom(K_n,\qm)$ complexes.
In \cite{Cs05} Csorba shows that any finite simplicial complex with a free ${\mathbb
  Z}_2$-action can be approximated up to ${\mathbb Z}_2$-homotopy
equivalence as a complex $\Hom(K_2,G)$ for an appropriate choice of
graph~$G$ (see also \cite{Ziv05} for an independent proof of this).
We describe how his construction fits into our set-up, and we generalize
his result to establish the following.

\begin{thm}\label{thm:univintro}
[\prettyref{thm:univn}]
Let $X$ be a finite simplicial complex with a free $S_n$-action for $n \geq 2$.
Then there exists a loopless graph $G$ and $S_n$-homotopy equivalence
\[|\Hom(K_n, G)| \simeq_{S_n} |X|,\]
where $S_n$ acts on the left hand side as the automorphism group of~$K_n$.
\end{thm}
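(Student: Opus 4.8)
The plan is to reduce to Csorba's construction for $n=2$ by a two-stage strategy: first realize $|X|$ (up to $S_n$-homotopy) as $\Hom(G,H)$ for a suitable auxiliary graph $H$ with a right $S_n$-action coming from automorphisms of $K_n$, and then use \prettyref{thm:secondentry} to transfer the twisted-product structure into the first coordinate of a $\Hom$ complex. More precisely, I would start by choosing a finite $S_n$-simplicial complex $Y$ (a barycentric-type subdivision of $X$) so that $|Y|\simeq_{S_n}|X|$ and the $S_n$-action on $Y$ is free and ``sufficiently discontinuous'' on the 1-skeleton of its face poset in the sense of \prettyref{def:discont} — say $(\max\set{5,d+2})$-discontinuous for the $d$ arising from the graph $G$ produced below. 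Such $Y$ exists because iterated barycentric subdivision makes a free simplicial action arbitrarily discontinuous (this is the content of the remark after \prettyref{thm:secondentry} together with \prettyref{lem:discont-subdiv}). Set $P\deq F(Y)$, so $|P^1|\simeq|Y|\simeq_{S_n}|X|$ after passing to the reflexive 1-skeleton, with $S_n$ acting on $P^1$ freely and discontinuously.

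Next I would invoke \prettyref{thm:firstentry} with $\Gamma=S_n$, $T=K_n$ (right $S_n$-action by relabeling vertices), and the poset $P$ above: this gives
\[\Poset_{S_n}\bigl(P,\Hom(K_n,G)\bigr)\simeq\Hom\bigl(K_n\times_{S_n}P^1,G\bigr)\]
naturally in $G$. The key point is that when $\Hom(K_n,G)$ is ``large and connected enough'' relative to $P$ — which we can arrange by building $G$ appropriately — the space of equivariant poset maps $\Poset_{S_n}(P,\Hom(K_n,G))$ captures exactly the $S_n$-homotopy type of the equivariant function space $\Map_{S_n}(|P^1|,|\Hom(K_n,G)|)\simeq_{S_n}\Map_{S_n}(|X|,|\Hom(K_n,G)|)$. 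So the problem becomes: produce a loopless graph $G$ together with an $S_n$-homotopy equivalence $|\Hom(K_n,G)|\simeq_{S_n}|X|$. This is the self-referential heart of the statement, and it is where Csorba's $n=2$ argument enters as the base case and the model: one attaches to $K_n$ a carefully chosen gadget graph built from $X$ (vertices indexed by simplices of a subdivision of $X$, edges encoding the combinatorics of cross-polytope faces and the $S_n$-symmetry), so that the resulting $\Hom(K_n,G)$ deformation retracts $S_n$-equivariantly onto a complex combinatorially isomorphic to (a subdivision of) $X$. I would carry this out by mimicking Csorba's ``spider'' or ``neighborhood complex'' construction, replacing the single non-identity $\mathbb Z_2$-symmetry by the full action of $S_n$ on the color classes, and checking freeness of the resulting action from freeness of the $S_n$-action on $X$.

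The main obstacle is precisely this last construction: verifying that the gadget graph $G$ one writes down is loopless, that $\Hom(K_n,G)$ is $S_n$-homotopy equivalent to $|X|$ (rather than merely to a space containing a copy of $|X|$), and that the action is free — the subtlety being that free $S_n$-actions on simplicial complexes are more rigid than free $\mathbb Z_2$-actions (stabilizers of simplices must be trivial for \emph{all} of $S_n$, not just for one involution), so the combinatorial encoding must be done with care. A secondary technical point is the discontinuity bookkeeping: the subdivision level of $Y$ needed in \prettyref{thm:secondentry} depends on $d=$ the minimal diameter of a spanning tree in $G$, but $G$ itself is constructed from $X$ (hence, a priori, from $Y$); I would resolve this circularity by noting that one can first fix a graph $G_0$ realizing a fixed reference complex and bound $d$ in terms of $n$ and the number of vertices of $X$ alone, independently of the subdivision level, and only then choose $Y$ finely enough. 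Once $G$ and $Y$ are in hand, combining \prettyref{thm:firstentry}, \prettyref{thm:secondentry}, and the base-case realization yields $|\Hom(K_n,G)|\simeq_{S_n}|X|$, which is the assertion of \prettyref{thm:univn}.
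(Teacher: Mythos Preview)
Your proposal has a genuine circularity. You invoke \prettyref{thm:firstentry}, but that theorem computes $\Hom(K_n\times_{S_n}P^1,G)$ --- the twisted product sits in the \emph{source} --- as a space of equivariant poset maps \emph{into} $\Hom(K_n,G)$. This tells you nothing about producing a graph~$G$ with $\Hom(K_n,G)\simeq_{S_n}|X|$. You recognize this yourself (``the problem becomes: produce a loopless graph $G$\dots''), and then propose to mimic Csorba's direct combinatorial construction. But that is precisely the statement you are trying to prove; everything before that point is scaffolding that is never used. The discontinuity bookkeeping and the appeal to \prettyref{thm:secondentry} are likewise never actually applied to anything concrete.

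The paper's argument avoids this by putting the twisted product in the \emph{target}. One sets $G\deq K_n\times_{S_n}P^1$ with $P=\Chain^3(FX)$ and computes $\Hom(K_n,G)$ directly. The machinery behind \prettyref{thm:secondentry} (specifically \prettyref{prop:bundle0} and \prettyref{lem:bundle0'}, with the $8$-discontinuity from \prettyref{lem:discont-subdiv} handling the path-length hypotheses) gives
\[\Hom(K_n,\,K_n\times_{S_n}P^1)\;\simeq\;\Hom(K_n,K_n)\times_{S_n}\Hom(K_n,P^1).\]
Since $\Hom(K_n,K_n)\cong S_n$ as a discrete poset, the right side collapses to $\Hom(K_n,P^1)$ with a conjugation $S_n$-action. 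The remaining step --- and this is the ingredient your proposal is missing entirely --- is \prettyref{cor:Kn1} together with \prettyref{prop:GPfine}: because $P^1=(\Chain^3 FX)^1$ is a \emph{fine} graph, the map $\Hom(\ug,P^1)\to\Hom(K_n,P^1)$ is an equivariant homotopy equivalence, and $\Hom(\ug,P^1)\cong\Chain^3(FX)$ recovers $|X|$. No ad hoc gadget graph in the spirit of Csorba is needed; the construction is entirely functorial.
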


\noindent
In our set-up the desired graph is constructed as $G \deq K_n \times_{S_n} P^1$, where
$P$ is a the face poset of the given complex $X$, sufficiently
subdivided.  When $n=2$, we show how this recovers the construction of
Csorba.

The rest of the paper is organized as follows.  In \prettyref{sec:defs} we review relevant definitions and notation.  In \prettyref{sec:testgraphs} we describe explicitly our methods for the construction of new test graphs, and in particular the spherical and twisted toroidal graphs mentioned above.  In \prettyref{sec:furtherapps} we discuss other applications of these results in the context of homomomorphism duality and graph-theoretic interpretations of $\Hom$ complexes, as well as the $S_n$-universality of $\Hom$ complexes.  \prettyref{sec:proofs} is devoted to the proofs of the main theorems as well as some technical lemmas.  We conclude in \prettyref{sec:enrich} with some comments regarding the categorical content of our constructions, in particular in the context of \emph{enriched} category theory.

\noindent \textbf{Acknowledgments.}  The authors would like to thank
Eric Babson for useful conversations, as well as the anonymous referee for helpful comments and corrections.  The first author was supported by the Deutscher Akademischer Austausch Dienst (DAAD) and by a postdoctoral fellowship from the Alexander von Humboldt Foundation.  Both authors would like to thank the organizers of the MSRI Program on Computational Applications of Algebraic Topology in Fall 2006, where many of these ideas were developed.

\section{Definitions and conventions}\label{sec:defs}
In this section we provide a brief overview of some notions from the theory of graphs, $\Hom$ complexes, and general ${\mathbb Z}_2$-spaces.  We refer to ~\cite{BKcom} and ~\cite{Kchr} for a more thorough introduction to the subject.

For us a \textit{graph} $G$ is a finite set of vertices $V(G)$ with a
symmetric adjacency relation $E(G) \subset V(G) \times V(G)$;
hence our graphs are undirected without multiple edges, but
possibly with loops.  If $v$ and $w$ are vertices of $G$ such that
$(v,w) \in E(G)$ then we will often say that $v$ and $w$ are
\textit{adjacent} and denote this $v \sim w$.  A \textit{graph homomorphism}
$f:G \rightarrow H$ is a vertex set map $V(G) \rightarrow V(H)$ that
preserves adjacency: if $v \sim w$ in $G$ then $f(v) \sim f(w)$ in
$H$.  The \emph{complete graph} $K_n$ has vertices $\{1, 2, \dots, n\}$ and all possible non-loop edges.  Given a graph $G$, we define $\chi(G)$, the \emph{chromatic number} of $G$, to be the minimum $n$ such that there exists a graph
homomorphism $G \rightarrow K_n$.

\begin{defn}
Let $G$ and $H$ be graphs.  The categorical \emph{product} $G \times H$ is
the graph with vertex set $V(G) \times V(H)$ and with adjacency
given by $(v,w) \sim (v^\prime, w^\prime)$ if $v \sim v^\prime$ and
$w \sim w^\prime$.  The \textit{exponential} graph~$H^G$ is the graph on the
vertex set $V(H)^{V(G)}$ with adjacency $f \sim f^\prime$ if $f(v)
\sim f^\prime(w)$ for all $v \sim w$ in $G$.
\end{defn}

A graph is called \emph{reflexive} if the adjacency relation is
reflexive, i.e. if all of the vertices have loops. A graph is called \emph{loopless} if there are no loops on any of the vertices.  The graph~${\bf
1}$ is defined to be the (reflexive) graph with a single looped
vertex.  Note that there are natural isomorphisms $G \times {\bf 1} \rightarrow G$ and $G \rightarrow G^{\bf 1}$.

\begin{defn}
Let $G$ be a graph with a given equivalence relation $R$ on its vertices.  The \textit{quotient graph} $G/R$ is the graph with vertices $V(G)/R$ and with adjacency $[v] \sim [w]$ if there exists $v^\prime \in [v]$ and $w^\prime \in [w]$ such that $v^\prime \sim w^\prime$ in $G$.  \
\end{defn}

For our applications, the equivalence relation will most often be given by the orbits of some group action.  Recall that if $\Gamma$ is a group, and $X$ and $Y$ are spaces with (respectively) a right and a left $\Gamma$-action, then $\Gamma$ acts diagonally on the product $X \times Y$ according to $\gamma \cdot (x,y) \deq (x \gamma^{-1}, \gamma y)$.
The space $X \times_{\Gamma} Y$ is then defined to be the orbit space under this action, so that $X \times_{\Gamma} Y \deq (X \times Y) /
\sim$, where $(x \gamma,y) \sim (x, \gamma y)$.  Similarly, we have the following construction for graphs.

\begin{defn}
Let $G$ be a graph with a left $\Gamma$-action and $H$ a graph with a right $\Gamma$-action.  Define $G \times_{\Gamma} H$ to be the graph with vertices given by the orbits of the diagonal $\Gamma$-action on $G \times H$, with adjacency given
by $[(g,h)] \sim [(g^\prime, h^\prime)]$ if there exists
representatives in $G \times H$ with $(g,g^\prime) \sim (h,
h^\prime)$.
\end{defn}

\begin{defn}\label{def:discont}
Let $G$ be a graph with a left $\Gamma$-action for some group
$\Gamma$.  For an integer $d>0$, we say that the action is
\emph{$d$-discontinuous} if for each vertex $v \in G$, the
neighborhood $N_{d-1}(v)$ of radius~${d-1}$ around~$v$ has the property that
$\gamma v \notin N_{d-1}(v)$ for all nonidentity $\gamma \in
\Gamma$.
\end{defn}

We next come to the construction of the $\Hom$ complex.  We point out that our definition is slightly different from the one given in ~\cite{BKcom} in the sense that the $\Hom$ complex we define here is the face poset of the polyhedral complex given in ~\cite{BKcom}.  Since the geometric realization of the face poset of a regular cell complex is homeomorphic to the complex itself, the underlying spaces of both $\Hom$ complexes are the same.

\begin{defn}
Let $G$ and $H$ be graphs.  We define $\Hom(G,H)$ to be the poset whose elements are all set maps
$\alpha:V(G) \rightarrow 2^{V(H)} \backslash \{\emptyset\}$ with the
condition that if $g \sim g^\prime$ in $G$ then $h \sim h^\prime$
for all $h \in \alpha(g)$ and $h^\prime \in \alpha(h)$.  The partial
order is given by $\alpha \leq \beta$ if $\alpha(g) \subset
\alpha(g^\prime)$ for all $g \in V(G)$.
\end{defn}

For any graph $T$, $\Hom(T,\qm)$ is a functor from graphs to posets.  We will also need the following construction as a way
to obtain a (reflexive) graph from a poset.

\begin{defn}\label{def:1}
Let $P$ be a poset.  We define $P^1$ to be the reflexive graph with
vertices given by the atoms of $P$, and with adjacency given by $x
\sim y$ if there exists $z \in P$ with $z \geq x$ and $z \geq y$.
\end{defn}

Note that the atoms of $\Hom(G,H)$ are precisely the homomorphisms
$f:G \rightarrow H$.  We will sometimes refer to (arbitrary) elements
of $\Hom(G,H)$ as \textit{multihomomorphisms}.  The poset $\Hom(G,H)$
is ranked according to $\rank(\alpha) = \displaystyle\sum_{v \in V(G)}
\bigl( |\alpha(v)| - 1\bigr)$, for $\alpha \in \Hom(G,H)$.  We will
often speak about topological properties of the $\Hom$ complexes, and
in this context we will be referring to the (geometric realization of
the) order complex of the poset $\Hom(G,H)$.  We will use the notation
$|\Hom(G,H)|$ to emphasize the distinction but will also use simply
$\Hom(G,H)$ when the context is clear.

The $\Hom$ complex is functorial in both entries, and in particular $\Hom(T,G)$ carries an action by $\Aut(T)$, the automorphism group of the graph $T$.  If $T$ has an involution that flips an edge, this then induces a \emph{free} ${\mathbb Z}_2$-action on the space $\Hom(T,G)$ for any loopless graph $G$ (see for instance \cite{Kchr}).  The examples discussed in the introduction arise from taking $T$ to be an edge $K_2$ with the nonidentity involution, or to be an odd cycle $C_{2r+1}$ on the vertices $\{0, \dots, 2r\}$ with the reflection given by $i \mapsto 2-i \pmod {2r+1}$.

If $X$ is a space with a (free) ${\mathbb Z}_2$-action, there are several invariants used to measure the complexity of the action.  We collect some of these notions in the next definition.

\begin{defn}  Let $X$ be a space with a free ${\mathbb Z}_2$-action, and let ${\mathbb S}^n$ denote the $n$-sphere endowed with the antipodal action.  We define the \emph{index} and \emph{coindex} of $X$ as follows:
\begin{align*}
\ind X &\deq \min\{m: X \rightarrow_{\Z_2} {\mathbb S}^m \}\\
\coind X &\deq \max\{n: {\mathbb S}^n \rightarrow_{\Z_2} X\},
\end{align*}
where $\rightarrow_{\Z_2}$ denotes a ${\mathbb Z}_2$-equivariant map.  The \emph{height} of $X$, denoted $\Ht_{\Z_2} X$, is defined to be the highest nonvanishing power of the first Stiefel-Whitney class of the bundle ${\mathbb R} \times_{{\mathbb Z}_2} X \rightarrow X/{\mathbb Z}_2$.
\end{defn}
\noindent
We refer to \cite{M03} for further discussion of these invariants, and especially their use in combinatorial applications.  One can check that if $X$ a free ${\mathbb Z}_2$-space, these values are related in the following way:
\[\conn X + 1 \leq \coind X \leq \Ht_{{\mathbb Z}_2} X \leq \ind X.\]

Finally, we collect a couple notions from the theory of posets.

\begin{defn}
Let $P$ be a poset.  Define $\Chain P$ to be the poset whose elements are the nonempty chains $x_1 \leq \cdots \leq x_n$ of $P$, with the relation given by containment.
\end{defn}

\begin{defn}
Let $P$ and $Q$ be posets.  Then $\Poset(P,Q)$ is the poset of all
order preserving maps $f\colon P \rightarrow Q$, with the relation $f \leq g$ if
$f(x) \leq g(x)$ for all $x\in P$.  If $P$ and $Q$ are both equipped with actions by some group $\Gamma$, we let $\Poset_\Gamma(P,Q)$ denote the subposet of $\Poset(P,Q)$ consisting of all equivariant poset maps.
\end{defn}

\section{Constructing new test graphs}\label{sec:testgraphs}
In this section we provide details regarding our primary application of \prettyref{thm:firstentry}, namely the construction of new test graphs for topological bounds on
chromatic number.  We begin with a brief discussion regarding the definition and history of such graphs, as well as our general approach to their construction.

Recall from \cite{Kchr} that a graph~$T$
is called a \textit{homotopy test graph} if for every graph~$G$, we
have the following inequality:
\[\chi(G) > \chi(T) + \conn\bigl(\Hom(T,G)\bigr).\]
The results of Lov\'{a}sz and Babson, Kozlov imply that the complete graphs $K_n$, $n\ge2$, and
the odd cycles $C_{2r+1}$ are homotopy test graphs.  For some time
it was an open question whether \textit{all} graphs were homotopy
test graphs, but Hoory and Linial showed that this was not the case
in \cite{HLcounter} by constructing a graph~$H$ with $\chi(H) = 5$
such that $\Hom(H, K_5)$ is connected.  In fact there are very few
graphs that are known to be test graphs (see \cite{Schspace} and
\cite{Zivcom} for some discussion regarding this).

Now suppose $T$ is graph with a ${\mathbb Z}_2$-action that flips an
edge.  Also from \cite{Kchr}, we say that a graph~$T$ is a
\textit{Stiefel-Whitney test graph} if for all $G$ with $\Hom(T,G) \neq \emptyset$ we have
\begin{equation}\label{eq:testgraph}
\chi(G) \geq \chi(T) + \Ht_{{\mathbb Z}_2}\bigl(\Hom(T,G)\bigr).
\end{equation}
\noindent
We point out that it is enough to restrict graphs in the second coordinate to the set of all complete graphs $K_n$ with $n \geq \chi(T)$.  Indeed, we have $G \rightarrow K_{\chi(G)}$ and $\Ht_{{\mathbb Z}_2}\bigl(\Hom(T,G)\bigr) \leq \Ht_{{\mathbb Z}_2}\bigl(\Hom(T,H)\bigr)$ whenever $G \rightarrow H$.  Hence for any $G$ we get
\[\Ht_{{\mathbb Z}_2}\bigl(\Hom(T,G)\bigr) \leq \Ht_{{\mathbb Z}_2}\bigl(\Hom(T,K_{\chi(G)})\bigr) \leq \chi(K_{\chi(G)}) - \chi(T) = \chi(G) - \chi(T).\]
Also, in \cite{Kchr} Kozlov insists on equality in the formulation involving complete graphs, but for our purposes the inequality will suffice.  Note that the existence of an equivariant coloring $T\to_{\Z_2}K_{\chi(T)}$ (as defined in \prettyref{lem:colorTkm}) will in fact imply such an equality, since such a coloring induces an equivariant map
$\Sphere^{n-\chi(T)}\homeo_{\Z_2}\Hom(K_2, K_{n-\chi(T)+2})
\to_{\Z_2}\Hom(K_{\chi(T)}, K_n)\to_{\Z_2}\Hom(T,K_n)$ which implies
\[\htt\left(\Hom(T,K_n)\right)\ge
\coind_{\Z_2}\left(\Hom(T,K_n)\right)\ge n-\chi(T).\]

Also note that every Stiefel-Whitney test graph (in our sense) is also a homotopy test graph since $\conn(X) + 1 \leq \Ht_{{\mathbb Z}_2}(X)$ for a ${\mathbb Z}_2$-space $X$.  Hence if $T$ is a Stiefel-Whitney test graph and $G$ is a graph with $\Hom(T,G) \neq \emptyset$ we have
\[\chi(T) \leq \chi(G) - \Ht_{{\mathbb Z}_2}\bigl(\Hom(T,G)\bigr) \leq \chi(G) - \conn \bigl(\Hom(T,G)\bigr) - 1.\]

We next describe our method for constructing new Stiefel-Whitney test
graphs.  As mentioned above, we will obtain these graphs by possibly
repeated applications of the $\qm \times_{{\mathbb Z}_2} P^1$
construction, where $P$ is a symmetric triangulation of ${\mathbb
  S}^k_b$.  The basic result which makes this possible is the
following.

\begin{prop}\label{prop:main}
Let $T$ be a graph with a right ${\mathbb Z}_2$-action and let $G$ be a graph.  For $k\ge0$, let $P$ be the face poset of a
$(\Z_2\times\Z_2)$-triangulation of~$\Sphere^k_b$.
If $\Hom(T \times_{\Z_2} P^1, G) \neq \emptyset$ then we have
\[\Ht_{{\mathbb Z}_2}\bigl(\Hom(T \times_{{\mathbb Z_2}} P^1,G)\bigr) + k \leq \Ht_{{\mathbb Z}_2}
\bigl(\Hom(T,G)\bigr).\]
Hence if $T$ is a Stiefel-Whitney test graph, we have $\chi(T
\times_{\Z_2} P^1)\ge \chi(T) +k$, and for any graph~$G$ such that $\Hom(T
\times_{\Z_2} P^1, G)\ne\emptyset$ we get
\[\Ht_{{\mathbb Z}_2}\bigl(\Hom(T\times_{\Z_2} P^1, G)\bigr)+k+\chi(T) \leq \Ht_{{\mathbb Z}_2} \bigl(\Hom(T,G)\bigr) +\chi(T)\le\chi(G).\]
\end{prop}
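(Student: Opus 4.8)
**The plan is to deduce Proposition~\ref{prop:main} from Theorem~\ref{thm:firstentry} together with Proposition~\ref{prop:Sk} (the height bound for $\mathbb{S}^k_b$-twisted products).** The key observation is that the assignment $m \mapsto \Hom(T \times_{\mathbb{Z}_2} P^1, G)$ can be compared with spaces of equivariant maps out of $\mathbb{S}^k_b$ via Theorem~\ref{thm:firstentry}: taking $\Gamma = \mathbb{Z}_2$, the theorem gives a natural homotopy equivalence
\[
\Poset_{\mathbb{Z}_2}\bigl(P, \Hom(T,G)\bigr) \simeq \Hom\bigl(T \times_{\mathbb{Z}_2} P^1, G\bigr),
\]
where $P$ is the face poset of the chosen $(\mathbb{Z}_2 \times \mathbb{Z}_2)$-triangulation of $\mathbb{S}^k_b$. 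Here the first $\mathbb{Z}_2$ (the one we quotient by) is the antipodal action, while the second commuting $\mathbb{Z}_2$ (the reflection, together with the edge-flip on $T$) survives as an action on both sides of the equivalence; it is this residual action that makes $\Hom(T \times_{\mathbb{Z}_2} P^1, G)$ a free $\mathbb{Z}_2$-space when $G$ is loopless and lets us speak of its height.

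First I would set up the equivariant bookkeeping carefully: identify $|\mathbf{P}| = |{\Chain} P|$ with $\mathbb{S}^k_b$ as a $\mathbb{Z}_2 \times \mathbb{Z}_2$-space, note that $\Poset_{\mathbb{Z}_2}(P, \Hom(T,G))$ is, up to homotopy, the space $\Map_{\mathbb{Z}_2}(\mathbb{S}^k_b, \Hom(T,G))$ of antipodally-equivariant maps (this is the standard fact that equivariant poset maps model equivariant continuous maps, and is the mechanism already used in Theorem~\ref{thm:limitSch}), and track the residual reflection $\mathbb{Z}_2$-action throughout. Next I would invoke the general adjunction-type estimate: for a free $\mathbb{Z}_2$-space $Y$ with $\Map_{\mathbb{Z}_2}(\mathbb{S}^k_b, Y) \neq \emptyset$ (which is exactly the nonemptiness hypothesis $\Hom(T \times_{\mathbb{Z}_2} P^1, G) \neq \emptyset$ after transporting along the equivalence), one has
\[
\Ht_{\mathbb{Z}_2}\bigl(\Map_{\mathbb{Z}_2}(\mathbb{S}^k_b, Y)\bigr) + k \leq \Ht_{\mathbb{Z}_2}(Y).
\]
This inequality is the adjoint form of Proposition~\ref{prop:Sk}: given an equivariant map $\Map_{\mathbb{Z}_2}(\mathbb{S}^k_b, Y) \to \mathbb{S}^N$ realizing the height on the left, one produces from it an equivariant map $\mathbb{S}^k_b \times_{\mathbb{Z}_2} \Map_{\mathbb{Z}_2}(\mathbb{S}^k_b, Y) \to Y$ (the evaluation) and combines with Proposition~\ref{prop:Sk} applied to $X = \Map_{\mathbb{Z}_2}(\mathbb{S}^k_b, Y)$; chasing Stiefel--Whitney classes through $\mathbb{R} \times_{\mathbb{Z}_2}(\mathbb{S}^k_b \times_{\mathbb{Z}_2} \Map_{\mathbb{Z}_2}(\mathbb{S}^k_b, Y)) \to Y/\mathbb{Z}_2$ yields the bound. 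Applying this with $Y = \Hom(T,G)$ and transporting back along the natural homotopy equivalence of Theorem~\ref{thm:firstentry} gives exactly the first displayed inequality of the proposition.

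For the second assertion, I would argue as follows. Suppose $T$ is a Stiefel--Whitney test graph. To show $\chi(T \times_{\mathbb{Z}_2} P^1) \geq \chi(T) + k$, apply the first inequality with $G = K_n$ for $n = \chi(T \times_{\mathbb{Z}_2} P^1)$; since then $\Hom(T \times_{\mathbb{Z}_2} P^1, K_n)$ contains an actual homomorphism (hence is nonempty and $(-1)$-connected, so $\Ht_{\mathbb{Z}_2} \geq 0$), we get $k \leq \Ht_{\mathbb{Z}_2}\Hom(T, K_n) \leq \chi(K_n) - \chi(T) = n - \chi(T)$, using that $T$ is a Stiefel--Whitney test graph for the middle step. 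Rearranging gives $n \geq \chi(T) + k$, which is the claim. The final chain of inequalities in the statement is then immediate: the first inequality of the proposition gives $\Ht_{\mathbb{Z}_2}\Hom(T \times_{\mathbb{Z}_2} P^1, G) + k \leq \Ht_{\mathbb{Z}_2}\Hom(T,G)$, adding $\chi(T)$ to both sides and applying the Stiefel--Whitney test graph inequality~\eqref{eq:testgraph} for $T$ on the right gives $\Ht_{\mathbb{Z}_2}\Hom(T,G) + \chi(T) \leq \chi(G)$, and combining these yields the displayed bound.

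**The main obstacle I anticipate is the equivariant homotopy-theoretic step connecting $\Poset_{\mathbb{Z}_2}(P, \Hom(T,G))$ to $\Map_{\mathbb{Z}_2}(\mathbb{S}^k_b, \Hom(T,G))$ and the careful handling of the two commuting $\mathbb{Z}_2$-actions.** One must be sure that the equivariant poset mapping space genuinely computes the equivariant continuous mapping space — this requires the triangulation to be a $\mathbb{Z}_2$-triangulation fine enough that equivariant simplicial/poset approximation works, which is precisely why the hypothesis specifies a $(\mathbb{Z}_2 \times \mathbb{Z}_2)$-triangulation and why barycentric subdivisions enter. One must also verify that the residual reflection action is the one making both sides free $\mathbb{Z}_2$-spaces in a compatible way, so that "$\Ht_{\mathbb{Z}_2}$" means the same thing on both sides of the Theorem~\ref{thm:firstentry} equivalence. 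Everything downstream — the adjoint form of Proposition~\ref{prop:Sk} and the arithmetic with test-graph inequalities — is then routine, so the proof effectively reduces to citing Theorem~\ref{thm:firstentry} in the correct equivariant form and performing the Stiefel--Whitney class computation that dualizes Proposition~\ref{prop:Sk}.
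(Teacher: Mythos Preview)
Your overall strategy is right and matches the paper's: use \prettyref{thm:firstentry} to relate $\Hom(T\times_{\Z_2}P^1,G)$ to equivariant maps out of~$P$, produce an evaluation map into $\Hom(T,G)$, and apply \prettyref{prop:Sk}. The arithmetic for the second half (chromatic number, test-graph inequality) is also fine; the paper just sets $G=T\times_{\Z_2}P^1$ rather than $G=K_n$, but your version works equally well.

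However, the step you flag as the ``main obstacle'' is a detour you do not need, and as stated it is not correct. You claim that $\Poset_{\Z_2}\bigl(P,\Hom(T,G)\bigr)$ is, up to homotopy, the space $\Map_{\Z_2}\bigl(\Sphere^k_b,\real{\Hom(T,G)}\bigr)$ of continuous equivariant maps. For a \emph{fixed} triangulation~$P$ this is generally false: the poset mapping space only models the continuous mapping space after passing to a colimit over subdivisions (this is exactly why \prettyref{thm:limitSch} is stated as a colimit). So the homotopy equivalence you invoke does not hold at the level of generality needed here.

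The paper sidesteps this entirely. From \prettyref{thm:firstentry} one only needs the existence of a $\Z_2$-equivariant poset map
\[
\Hom(T\times_{\Z_2}P^1,G)\;\to_{\Z_2}\;\Poset_{\Z_2}\bigl(P,\Hom(T,G)\bigr),
\]
not the full equivalence with a continuous mapping space. Since every element of the right-hand side is a poset map $P\to\Hom(T,G)$, geometric realization together with $\real P\homeo_{\Z_2\times\Z_2}\Sphere^k_b$ yields directly a continuous $\Z_2$-map
\[
\Sphere^k_b\times_{\Z_2}\real{\Hom(T\times_{\Z_2}P^1,G)}\;\to_{\Z_2}\;\real{\Hom(T,G)}.
\]
Now \prettyref{prop:Sk} applies with $X=\real{\Hom(T\times_{\Z_2}P^1,G)}$ itself, giving the height inequality immediately. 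In other words, your evaluation map should be built at the poset level and then realized, rather than first passing to $\Map_{\Z_2}(\Sphere^k_b,\,\cdot\,)$; once you do this, the obstacle you anticipated disappears.
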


\begin{proof}
\prettyref{thm:firstentry} yields an equivalence and hence in particular
(recalling our convention laid down in \prettyref{rem:homotopy}) a poset map
\[\Hom(T \times_{\Z_2} P^1, G)\to_{\Z_2}
\Poset_{\Z_2}(P,\Hom(T, G)).
\]
Since $\Sphere^k_b\homeo_{\Z_2\times\Z_2}\real P$, we obtain a continuous map
\[
\Sphere^k_b\times_{\Z_2}\real{\Hom(T\times_{\Z_2} P^1, G)}
\to_{\Z_2}\real{\Hom(T,G)}.\]
The first inequality now follows from \prettyref{prop:Sk}.
If $T$ is assumed to be a Stiefel-Whitney test graph
then the last inequality follows
from \prettyref{eq:testgraph}.
Finally, setting $G=T\times_{\Z_2}P^1$ we obtain
\[0 + k + \chi(T) \leq \Ht_{{\mathbb Z}_2}\bigl(\Hom(T \times_{{\mathbb Z_2}} P^1,T \times_{{\mathbb Z_2}} P^1)\bigr) + k + \chi(T) \leq \chi(T \times_{{\mathbb Z_2}} P^1),\]
\noindent
so that $\chi(T \times_{{\mathbb Z_2}} P^1) \geq k + \chi(T)$.
\end{proof}

In \prettyref{sec:spherical} and \prettyref{sec:toroidal} we use \prettyref{prop:main} to construct infinite families of test graphs, with special attention paid to the \emph{spherical} and \emph{twisted toroidal} graphs.

As the reader may have noticed in the proof of \prettyref{prop:main},
the full strength of \prettyref{thm:firstentry} is not needed to
establish the desired bounds on $\Ht_{{\mathbb Z}_2} \Hom(T\times_{\Z_2}P^1,G)$.
In fact, to establish upper bounds
on $\htt\Hom(T', G)$ for some $\Z_2$-graph~$T'$,
it is enough to construct $\Z_2$-maps
\[\Sphere^k_b\times_{\Z_2}\real{\Hom(T',G)}\to_{\Z_2}\real{\Hom(T,G)}\]
where $T$ is a $\Z_2$-graph (often $T=K_2$) for which upper bounds on
$\htt\Hom(T,G)$ are known.  \prettyref{prop:Sk} then yields
\[\htt\Hom(T', G)+k\le
\htt(\Sphere^k_b\times_{\Z_2}\real{\Hom(T',G)})
\le\htt\Hom(T,G).\]

This method was used in \cite{Schspace} in the context of odd cycles
in the first coordinate of the $\Hom$ complexes.  In \prettyref{sec:mycielski} we will employ this method to show that the
\emph{generalized Mycielski} graphs provide another family of test
graphs with arbitrarily high chromatic number.  The full strength of \prettyref{thm:firstentry} will be used in the context of spherical and twisted toroidal
graphs to establish the existence of graph homomorphisms from these graphs, as in \prettyref{prop:coind} and \prettyref{cor:mapsfrom}.

\subsection{Spherical graphs}\label{sec:spherical}
The most natural application of \prettyref{prop:main} comes from setting $T \deq K_2$ to obtain the graph $K_2 \times_{\Z_2}P^1$, where $P$ is the face poset of a (${\mathbb Z}_2 \times {\mathbb Z}_2$)-triangulation of ${\mathbb S}^k_b$.  As we have seen, the graph $K_2$ is a Stiefel-Whitney test graph, and hence from \prettyref{prop:main} we get
\begin{equation}\label{eq:snk-test}
\htt\Hom(K_2\times_{\Z_2} P^1, G)+k+2\le\htt\Hom(K_2,G)+2\le\chi(G).
\end{equation}

It now follows that the graph $K_2\times_{\Z_2} P^1$ is a test graph if its chromatic number
is $k+2$ and \prettyref{prop:main} also tells us that
$\chi(K_2\times_{\Z_2} P^1)\ge k+2$.
On the other hand, $K_2\times_{\Z_2} P^1$
need not be $(k+2)$-colorable in general (for example a
triangulation of $\Sphere^1_b$ as a $4$-gon yields $K_4$), but it is if
the triangulation $P$ is fine enough.  We describe concrete colorings in
\prettyref{prop:col-n-compl}, but it also follows more abstractly
from the following result,
which also gives a graph theoretical interpretation of the
coindex of the space $\Hom(T,G)$,
 for $T$ a graph with an involution that flips an edge.

\begin{prop}\label{prop:coind}
Let $T$ be a graph with an involution that flips an edge.  For each $k$,
suppose $\{X^k_m\}_{m \geq 0}$ is a sequence of symmetrically (with
respect to the antipodal action) triangulated $k$-spheres such the
maximal diameter of a simplex of $X^k_m$ tends to zero when $m$ tends
to infinity (e.g. take $X^k_m$ to be the $m$th barycentric subdivision
of the boundary of the regular $k+1$-dimensional cross polytope).  Let
$G^k_m \deq \bigl(F(X^k_m)\bigr)^1$ be the reflexive graph given by the
1-skeleton of $X^k_m$.  Then
\begin{align*}
\coind_{\Z_2}\Hom(T, H)
&=\max\set{k\colon\text{There are $m\ge0$ and $T \times_{{\mathbb Z}_2} G^k_m\to H$}}
\\
&=\max\set{k\colon\text{For almost all $m$ there is
$T \times_{{\mathbb Z}_2} G^k_m\to H$}}.
\end{align*}
\end{prop}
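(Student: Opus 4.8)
The plan is to prove the chain
$\coind_{\Z_2}\Hom(T,H)\le b\le a\le\coind_{\Z_2}\Hom(T,H)$,
where $a$ and $b$ denote the first and the second maximum on the right-hand side of the statement. The inequality $b\le a$ is immediate, since any $k$ for which almost all $m$ admit a homomorphism in particular has \emph{some} $m$ that does. For the other two inequalities the basic tool is \prettyref{thm:firstentry} applied with $\Gamma=\Z_2$ and $P=F(X^k_m)$, so that $P^1=G^k_m$: it provides a homotopy equivalence $\real{\Poset_{\Z_2}(F(X^k_m),\Hom(T,H))}\simeq\real{\Hom(T\times_{\Z_2}G^k_m,H)}$, and in particular the left-hand poset is nonempty exactly when the right-hand one is. Since a nonempty finite poset has a minimal element and the minimal elements of a $\Hom$-poset are precisely the graph homomorphisms, this says that a homomorphism $T\times_{\Z_2}G^k_m\to H$ exists if and only if there is an equivariant poset map $F(X^k_m)\to\Hom(T,H)$. (We assume throughout that $H$ is loopless, so that $\Hom(T,H)$ is a free $\Z_2$-space under the involution of $T$; if $\Hom(T,H)=\emptyset$ all three quantities degenerate and there is nothing to prove, and finiteness of $\Hom(T,H)$ makes all the maxima well defined.)

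For $a\le\coind_{\Z_2}\Hom(T,H)$, suppose $T\times_{\Z_2}G^k_m\to H$ for some $m$. By the reduction above there is an equivariant poset map $F(X^k_m)\to\Hom(T,H)$, and realizing it gives a $\Z_2$-map $\real{F(X^k_m)}\to\real{\Hom(T,H)}$. Now $\real{F(X^k_m)}$ is the realization of the order complex of the face poset of $X^k_m$ — that is, of the barycentric subdivision of $X^k_m$ — and the canonical ``barycentre'' homeomorphism identifies it $\Z_2$-equivariantly with $\real{X^k_m}=\Sphere^k$ carrying the free antipodal action. Hence $\Sphere^k\to_{\Z_2}\real{\Hom(T,H)}$, so $\coind_{\Z_2}\Hom(T,H)\ge k$; taking the supremum over all admissible $k$ yields $a\le\coind_{\Z_2}\Hom(T,H)$.

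For $\coind_{\Z_2}\Hom(T,H)\le b$, set $k:=\coind_{\Z_2}\Hom(T,H)$ and fix a $\Z_2$-map $f\colon\Sphere^k\to\real{\Hom(T,H)}$, viewing $\Sphere^k=\real{X^k_m}$ for every $m$. Pull back along $f$ the open cover of $\real{\Hom(T,H)}$ by open stars of the vertices of the order complex $\Delta(\Hom(T,H))$ and choose a Lebesgue number for it; since the maximal simplex diameter of $X^k_m$ tends to $0$, for all sufficiently large $m$ every closed vertex star of $X^k_m$ is carried by $f$ into some open star. Assigning to each vertex of $X^k_m$ such a star defines a simplicial map $g_m\colon X^k_m\to\Delta(\Hom(T,H))$, which can be chosen $\Z_2$-equivariant because the free $\Z_2$-action on $X^k_m$ allows us to make the choice on one vertex from each orbit and extend it equivariantly. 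Finally, $\sigma\mapsto\max g_m(\sigma)$ is a well-defined, monotone, $\Z_2$-equivariant map $F(X^k_m)\to\Hom(T,H)$: well defined because the image of a simplex under a simplicial map into an order complex is a chain, monotone because $\sigma\subseteq\tau$ forces $g_m(\sigma)\subseteq g_m(\tau)$, and equivariant because the $\Z_2$-action on $\Hom(T,H)$ is by poset automorphisms and hence commutes with taking maxima of chains. By the reduction in the first paragraph, a homomorphism $T\times_{\Z_2}G^k_m\to H$ exists for all sufficiently large $m$, so $k$ lies in the set defining $b$ and $b\ge k$. This closes the chain of inequalities, showing that all three expressions are equal.

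The one genuinely delicate step is the equivariant simplicial approximation in the third paragraph: one must verify that the choice of images of vertices can be carried out compatibly with the $\Z_2$-action (which works precisely because that action is free, so orbits are honest antipodal pairs) and that the passage from the simplicial map $g_m$ back to a \emph{poset} map via maxima of chains is legitimate and $\Z_2$-equivariant. These checks are of the standard simplicial-approximation type and run parallel to constructions in~\cite{Schspace}; everything else is a formal consequence of \prettyref{thm:firstentry}.
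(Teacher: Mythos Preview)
Your proof is correct and follows essentially the same approach as the paper's own proof: both directions use \prettyref{thm:firstentry} to translate between graph homomorphisms $T\times_{\Z_2}G^k_m\to H$ and equivariant poset maps $F(X^k_m)\to\Hom(T,H)$, then realize such a poset map to get $\Sphere^k\to_{\Z_2}\real{\Hom(T,H)}$ in one direction and invoke equivariant simplicial approximation (followed by $\sigma\mapsto\max g_m(\sigma)$) in the other. Your exposition is somewhat more explicit than the paper's---you spell out the Lebesgue-number argument, the freeness needed to make the approximation equivariant, and the verification that the max-of-a-chain map is a monotone equivariant poset map---but the underlying argument is the same.
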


\begin{proof}
If there is a graph homomorphism $T \times_{{\mathbb Z}_2} G^k_m\to H$
then by \prettyref{thm:firstentry}
\[\emptyset\ne\Hom(T \times_{{\mathbb Z}_2} G^k_m,H)\homot
\Poset_{\Z_2}(F(X^k_m), \Hom(T, H))\] and hence there is an equivariant
map
\[\Sphere^k\homeo_{\Z_2}\real{F(X^k_m)}\to_{\Z_2}\real{\Hom(T, H)}\]
which means that $\coind_{\Z_2}\Hom(T, H)\ge k$.

On the other hand, if $\coind_{\Z_2}\Hom(T, H)\ge k$, then by
simplicial approximation there is an equivariant simplicial map from
$X^k_m$ to the barycentric subdivision of~$\Hom(T, H)$, whenever the
simplices of $X^k_m$ are small enough, and therefore for almost
all~$m$.  Such a simplicial map induces a poset map
$F(X^k_m)\to\Hom(T, H)$ by sending a simplex to the maximum of the
images of its vertices.  This shows that $\emptyset\ne
\Poset_{\Z_2}(F(X^k_m), \Hom(T, H))\homot\Hom(T \times_{{\mathbb Z}_2} G^k_m,H)$.
\end{proof}

\begin{cor}\label{cor:Snk-test}
Let $k\ge0$ and $G^k_m$ be as above.  Then there is an $M\ge0$ such that
for all $m\ge M$ the equality $\chi(K_2\times_{\Z_2} G^k_m)=k+2$ holds and
hence $K_2\times_{\Z_2} G^k_m$ is a Stiefel-Whitney test graph.
\end{cor}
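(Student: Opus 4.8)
The plan is to pin down the chromatic number $\chi(K_2\times_{\Z_2}G^k_m)$ exactly for all large~$m$, after which the Stiefel-Whitney test graph property drops out of~\eqref{eq:snk-test}. The bound $\chi(K_2\times_{\Z_2}G^k_m)\ge k+2$ is already available for every~$m$: $K_2$ is a Stiefel-Whitney test graph with $\chi(K_2)=2$, and each $X^k_m$ carries, besides the antipodal involution, a commuting reflection (the $X^k_m$ being subdivisions of the cross-polytope boundary), so that $F(X^k_m)$ is the face poset of a $(\Z_2\times\Z_2)$-triangulation of $\Sphere^k_b$; this is exactly the chromatic bound in \prettyref{prop:main} (equivalently the first inequality of~\eqref{eq:snk-test} applied with $G=K_2\times_{\Z_2}G^k_m$). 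One can also see this bound without reference to the second $\Z_2$: a homomorphism $K_2\times_{\Z_2}G^k_m\to K_{k+1}$ would, by \prettyref{thm:firstentry}, make $\Poset_{\Z_2}(F(X^k_m),\Hom(K_2,K_{k+1}))$ nonempty and hence yield a $\Z_2$-map $\Sphere^k\homeo_{\Z_2}\real{F(X^k_m)}\to_{\Z_2}\real{\Hom(K_2,K_{k+1})}\homeo_{\Z_2}\Sphere^{k-1}$, which is impossible by Borsuk--Ulam.

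For the reverse inequality I would feed $T=K_2$ and $H=K_{k+2}$ into \prettyref{prop:coind}. The classical $\Z_2$-homeomorphism $\Hom(K_2,K_{k+2})\homeo_{\Z_2}\Sphere^k$ gives $\coind_{\Z_2}\Hom(K_2,K_{k+2})=k$, so $k$ itself lies in the index set on the right-hand side of \prettyref{prop:coind}: for all but finitely many~$m$ there is a graph homomorphism $K_2\times_{\Z_2}G^k_m\to K_{k+2}$. Choosing $M$ larger than every exceptional value of~$m$, we obtain $\chi(K_2\times_{\Z_2}G^k_m)\le k+2$, hence $\chi(K_2\times_{\Z_2}G^k_m)=k+2$, for all $m\ge M$. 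The homomorphism into the loopless graph $K_{k+2}$ also certifies that $K_2\times_{\Z_2}G^k_m$ is loopless, so that $\Hom(K_2\times_{\Z_2}G^k_m,\qm)$ carries a free $\Z_2$-action and its height is defined.

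To finish, fix $m\ge M$ and any graph~$G$ with $\Hom(K_2\times_{\Z_2}G^k_m,G)\ne\emptyset$. Then~\eqref{eq:snk-test} reads $\htt\Hom(K_2\times_{\Z_2}G^k_m,G)+(k+2)\le\chi(G)$, and substituting $k+2=\chi(K_2\times_{\Z_2}G^k_m)$ turns this into precisely the defining inequality~\eqref{eq:testgraph} of a Stiefel-Whitney test graph. Thus for all $m\ge M$ the graph $K_2\times_{\Z_2}G^k_m$ is a Stiefel-Whitney test graph with $\chi(K_2\times_{\Z_2}G^k_m)=k+2$.

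The only step with genuine content is the upper bound on the chromatic number, i.e.\ producing the colorings $K_2\times_{\Z_2}G^k_m\to K_{k+2}$ for large~$m$, and this is exactly what \prettyref{prop:coind} delivers: its proof converts the $\Z_2$-map $\Sphere^k\to\real{\Hom(K_2,K_{k+2})}$ supplied by $\coind_{\Z_2}\Hom(K_2,K_{k+2})=k$ into an equivariant simplicial map out of a sufficiently fine symmetric triangulation, and then, via \prettyref{thm:firstentry}, into the required graph homomorphism. Everything else is bookkeeping with inequalities already in place, so I expect this coindex-plus-simplicial-approximation step to be the crux.
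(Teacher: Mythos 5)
Your argument is correct and follows essentially the same route as the paper: the upper bound $\chi(K_2\times_{\Z_2}G^k_m)\le k+2$ for almost all $m$ comes from \prettyref{prop:coind} with $T=K_2$, $H=K_{k+2}$ and the fact $\coind_{\Z_2}\Hom(K_2,K_{k+2})=\coind_{\Z_2}\Sphere^k=k$, while the lower bound and the test-graph conclusion are read off from~\eqref{eq:snk-test}. Your alternative Borsuk--Ulam derivation of the lower bound and the observation that the $K_{k+2}$-coloring certifies looplessness are harmless additions that do not change the substance of the argument.
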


\begin{proof}
We only have to show $\lim_k \chi(K_2\times_{\Z_2}
G^n_k)\le n+2$, and this follows from the preceding proposition and
$\coind_{\Z_2}\Hom(K_2, K_{n+2})=\coind_{\Z_2}\Sphere^n=n$.
\end{proof}

\begin{prop}\label{prop:col-n-compl}
Let $X$ be the second barycentric subdivision of a regular
$n$-dimensional cell complex with a free cellular $\Z_2$-action, and let $F(X)$ denote its face poset.  Then
$\chi(K_2\times_{\Z_2} F(X)^1)\le n+2$.
\end{prop}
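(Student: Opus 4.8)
The plan is to produce an explicit proper $(n+2)$-coloring of the graph $K_2 \times_{\Z_2} F(X)^1$, exploiting the fact that $X$ is a \emph{second} barycentric subdivision. Recall that the vertices of $K_2 \times_{\Z_2} F(X)^1$ are orbits $[(i, x)]$ with $i \in \{0,1\}$ and $x$ an atom of $F(X)$, i.e. a vertex of the complex $X$; since the $\Z_2$-action on $F(X)^1$ is free (it comes from a free cellular action on a cell complex, and barycentric subdivision preserves freeness), each such orbit has a unique representative, so we may as well think of vertices as pairs $(i,x)$ with the identification $(i,x) \sim_{\Z_2} (1-i, \tau x)$, where $\tau$ denotes the free involution. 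An edge joins $[(i,x)]$ to $[(j,y)]$ when, after choosing representatives, $i \sim j$ in $K_2$ (forcing $i \neq j$) and $x \sim y$ in $F(X)^1$ (i.e. $x$ and $y$ span a face of $X$, or $x = y$).

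The key structural input is that $X = \sd^2 Y$ for a regular cell complex $Y$. After one barycentric subdivision, the vertices of $\sd Y$ are the cells of $Y$, and $\sd Y$ carries the canonical vertex coloring by dimension: color a vertex of $\sd Y$ by $\dim(\sigma) \in \{0,1,\dots,n\}$ where $\sigma$ is the corresponding cell of $Y$. This is a proper coloring of the $1$-skeleton of $\sd Y$ because any two vertices spanning an edge correspond to cells $\sigma \subsetneq \sigma'$ of different dimensions. Now the vertices of $X = \sd(\sd Y)$ are the simplices (chains of cells of $Y$) of $\sd Y$, and one gets the standard map from these to $\{0,1,\dots,n\}$ by sending a chain to, say, the dimension-color of its minimal element (equivalently, the length of the chain minus one is another option, but using the minimal element interacts better with the subdivision structure). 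The crucial point I would verify: on $X$, vertices $x, y$ spanning a face satisfy one of the associated chains containing the other, and one can arrange the coloring so that adjacent vertices of $F(X)^1$ either get different colors or are related by the subdivision structure in a controlled way — this is where the \emph{second} subdivision is needed, giving us "elbow room" to separate colors even along chains where the naive dimension color is constant.

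Concretely, I expect the cleanest route is: first show the $1$-skeleton of $X$ admits a proper $(n+1)$-coloring $c\colon V(X) \to \{0,\dots,n\}$ (standard, since the $1$-skeleton of a barycentric subdivision of an $n$-dimensional complex is $(n+1)$-colorable by dimension-of-cell), and then define a coloring of $K_2 \times_{\Z_2} F(X)^1$ into $\{0,\dots,n+1\}$ of the form $(i,x) \mapsto$ some function of $i$, $c(x)$, and $c(\tau x)$ chosen to be $\Z_2$-invariant under $(i,x)\leftrightarrow(1-i,\tau x)$ and to distinguish the two "halves" coming from the $K_2$ factor. The natural candidate mimics the $n=0$ case, where $K_2\times_{\Z_2}F(X)^1$ for $X$ a subdivided $0$-sphere recovers an even cycle or $K_2$, $2$-colorable: one color class should be $\{(i,x) : \text{$c(x) < c(\tau x)$, or ($c(x)=c(\tau x)$ and some tiebreak depending on $i$)}\}$ pushed into the top color $n+1$, with the rest colored by $\min(c(x), c(\tau x))$ or similar.

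The main obstacle will be verifying propriety of this last coloring along edges of the twisted product, particularly edges of the "diagonal" type $[(i,x)] \sim [(i', x')]$ where $x \sim x'$ in $F(X)^1$ simultaneously with $\tau x \sim \tau x'$ — one must rule out that both endpoints land in the top color $n+1$, and separately that both land in the same lower color. Handling the case $c(x) = c(\tau x)$ requires the extra flexibility of the second subdivision: in $\sd^2 Y$ one can show that if $x$ and $\tau x$ have the same dimension-color, then $x$ and $\tau x$ are not adjacent and their neighbors are sufficiently "spread out" in color that the tiebreak can be made consistently. I would formalize "spread out" via the observation that a second barycentric subdivision has the property that the closed stars of $x$ and $\tau x$ are disjoint when $c(x) = c(\tau x)$ (freeness plus one subdivision already separates a vertex from its image; a second subdivision separates closed stars), so no edge of $F(X)^1$ joins the $x$-side to the $\tau x$-side, and the coloring decomposes as a disjoint union of pieces each colorable by the $(n+1)$-coloring $c$ plus one bookkeeping color for the $K_2$-factor. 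I would present the argument by this case split, with the disjoint-closed-stars lemma doing the heavy lifting.
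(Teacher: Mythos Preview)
Your proposal has a genuine gap that cannot be patched within the framework you set up.  You plan to color $[(i,x)]$ by a function of $i$, $c(x)$, and $c(\tau x)$, where $c$ is the standard $(n+1)$-coloring of the barycentric subdivision by dimension.  But $\tau$ is a simplicial automorphism, so $c(\tau x)=c(x)$ for \emph{every} vertex~$x$; consequently the ``case $c(x)=c(\tau x)$'' is not a tiebreak case but the only case.  Your function then depends only on $i$ and $c(x)$, and the $\Z_2$-invariance you correctly impose, $f(i,c(x),c(\tau x))=f(1-i,c(\tau x),c(x))$, collapses to $f(i,c(x))=f(1-i,c(x))$, forcing independence of~$i$ as well.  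So your coloring is really just $g(c(x))$ for some $g$.  Now observe that $[(0,x)]$ and $[(0,\tau x)]=[(1,x)]$ are always adjacent in $K_2\times_{\Z_2}F(X)^1$: the representatives $(0,x)$ and $(1,x)$ are adjacent in $K_2\times F(X)^1$ because $0\sim1$ in $K_2$ and $x\sim x$ via the loop in the reflexive graph~$F(X)^1$.  Both endpoints receive the color $g(c(x))$, so the coloring is never proper.  The disjoint-closed-stars observation you make is correct (and is essentially the statement that the action on $(\Chain P)^1$ is $2$-discontinuous), but it does not supply the missing asymmetry; you still need data that distinguishes $x$ from $\tau x$.

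The paper's proof breaks the symmetry at the level of the \emph{original} face poset~$P$ (so that $F(X)\cong\Chain^2 P$ and vertices of $F(X)^1$ are chains $c\in\Chain P$): one chooses a section $S\subset P$ of the free $\Z_2$-action and colors a chain~$c$ by $\max\{d(p)+1:p\in S\cap c\}$ if $S\cap c\ne\emptyset$ and by $n+2$ otherwise.  The point is that this uses the dimensions of the \emph{individual elements} of~$c$ together with the non-equivariant choice of~$S$, not merely the $\tau$-invariant quantity~$|c|$.  One then checks directly that if $\tau c\le d$ in $\Chain P$ (the adjacency condition in the twisted product) the colors differ: if $\phi(c)=n+2$ then $\tau c\subset S$, so $d\cap S\ne\emptyset$ and $\phi(d)\ne n+2$; if $\phi(c)=d(p)+1$ for some $p\in S\cap c$, then $\tau p\in d\setminus S$ forces $\phi(d)\ne d(\tau p)+1=d(p)+1$.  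If you want to salvage your approach, the minimal repair is exactly this: replace the dimension coloring by a coloring that depends on a chosen section of the $\Z_2$-action, i.e.\ incorporate genuinely non-equivariant information.
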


\begin{proof}
Let $P$ be the face poset of the complex of which $X$ is the second
barycentric subdivision.  Then $F(X)\isom\Chain^2 P$ and we identify
the vertices of $F(X)^1$ with the elements of $\Chain P$.  We now
choose one element of each orbit of the free $\Z_2$-action on~$P$ and
call the set of all chosen elements~$S$.  For a face $p\in P$ we
denote its dimension by $d(p)$.  We now define a function
\begin{align*}
\phi\colon \Chain P&\to V(K_{n+2})=\set{1,\dots,n+2},\\
c&\mapsto\begin{cases}
\max\set{d(p)+1\colon p\in S\intersect c},&S\intersect c\ne\emptyset,\\
n+2,&S\intersect c=\emptyset.
\end{cases}
\end{align*}
For $c,d\in\Chain P$ with $\tau c\le d$ we show that
$\phi(c)\ne\phi(d)$.  Assume that $\phi(c)=n+2$.  Then $\tau c\subset
S$ and $\emptyset\ne \tau c\intersect S\subset d\intersect S$ and
hence $\phi(d)\ne n+2$.  Now let $\phi(c)<n+2$.  Then there is a $p\in
S\intersect c$ with $d(p)+1=\phi(c)$.  Since $\tau p\in d\wo S$,
we have $\phi(d)\ne d(\tau p)+1=d(p)+1$.
This shows that the map
\begin{align*}
V(K_2\times_{\Z_2} F(X)^1)&\to V(K_{n+2}),\\
[(1,c)]&\mapsto \phi(c)
\end{align*}
is a graph homomorphism, since $[(1,c)]$ is a neighbor
of~$[(1,d)]=[(2,\tau d)]$ if and only if $\tau c\le d$ or $\tau d\le
c$.
\end{proof}

\prettyref{prop:coind} gives us several candidates for families of test graphs (depending on which triangulations of spheres that we choose).  We wish to fix the following as the family of \emph{spherical graphs}.

\begin{defn}[Spherical graphs] \label{def:Snk}
Let $m,k\ge0$ and $X^k_m$ to be the $m$th barycentric subdivision
of the boundary of the regular $k+1$-dimensional cross polytope.
Then we define a loopless graph $S_{k,m}$ with a right $\Z_2$-action
by
\[S_{k,m}\deq K_2\times_{\Z_2}\bigl(F(X^k_m)\bigr)^1.\]
\end{defn}

\begin{cor}\label{cor:chi-Snk}
For $k\ge0$ and $m\ge1$ the graphs $S_{k,m}$ are Stiefel-Whitney test graphs with $\chi(S_{k,m})=k+2$.
\end{cor}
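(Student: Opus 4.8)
The plan is to establish $\chi(S_{k,m})=k+2$ for all $m\ge 1$ and then read off the test-graph property from the bound already recorded in \prettyref{eq:snk-test}. The inequality $\chi(S_{k,m})\ge k+2$ requires no new work: $K_2$ is a Stiefel-Whitney test graph and $F(X^k_m)$ is the face poset of a $(\Z_2\times\Z_2)$-triangulation of $\Sphere^k_b$ (the antipodal involution of the cross polytope together with a coordinate reflection, both commuting with barycentric subdivision), so the last assertion of \prettyref{prop:main} with $T=K_2$ gives $\chi(S_{k,m})=\chi\bigl(K_2\times_{\Z_2}F(X^k_m)^1\bigr)\ge\chi(K_2)+k=k+2$; this is exactly what is noted in the discussion following \prettyref{eq:snk-test}.

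The substantive point is the reverse inequality $\chi(S_{k,m})\le k+2$ for every $m\ge 1$ --- \prettyref{cor:Snk-test} only yields this for $m$ large --- and here I would appeal to \prettyref{prop:col-n-compl}, for which I must present $X^k_m$ as the second barycentric subdivision of a regular $k$-dimensional cell complex carrying a free cellular $\Z_2$-action. The key observation is that the boundary of the cross polytope is itself a first barycentric subdivision: let $Z$ be the regular CW decomposition of $\Sphere^k$ with exactly two cells in each dimension $0,\dots,k$, whose face poset consists of $k+1$ two-element antichains stacked so that every cell lies below every cell of strictly larger dimension. A chain in this poset is precisely a set of vertices of the cross polytope containing no antipodal pair, so $\operatorname{sd}(Z)\cong\partial\diamond^{k+1}$, and the free cellular involution of $Z$ swapping the two cells of each dimension induces the antipodal involution of $\partial\diamond^{k+1}$. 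Hence for every $m\ge 1$ we have $X^k_m=\operatorname{sd}^m(\partial\diamond^{k+1})\cong\operatorname{sd}^2\bigl(\operatorname{sd}^{m-1}Z\bigr)$, with $\operatorname{sd}^{m-1}Z$ a regular $k$-dimensional cell complex carrying a free cellular $\Z_2$-action (for $m\ge 2$ it is just the simplicial complex $\operatorname{sd}^{m-2}(\partial\diamond^{k+1})$). \prettyref{prop:col-n-compl} with $n=k$ now gives $\chi(S_{k,m})\le k+2$; at the poset level all that is used is $F(X^k_m)=\Chain^2\bigl(\Chain^{m-1}F(Z)\bigr)$ together with the freeness of the $\Z_2$-action on $\Chain^{m-1}F(Z)$.

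Combining the two bounds, $\chi(S_{k,m})=k+2$ for all $m\ge 1$. For the test-graph conclusion, $S_{k,m}$ is loopless and carries the $\Z_2$-action of \prettyref{def:Snk} (induced by the coordinate reflection of $X^k_m$), which flips an edge and hence acts freely on $\Hom(S_{k,m},G)$ for every loopless $G$; \prettyref{eq:snk-test} then reads $\htt\Hom(S_{k,m},G)+\chi(S_{k,m})\le\chi(G)$ whenever $\Hom(S_{k,m},G)\ne\emptyset$, which is precisely the defining inequality \prettyref{eq:testgraph} of a Stiefel-Whitney test graph. I expect the only genuine obstacle to be verifying the equivariant homeomorphism $\operatorname{sd}(Z)\cong\partial\diamond^{k+1}$ and keeping track of the fact that \prettyref{prop:col-n-compl} is stated for second barycentric subdivisions of \emph{regular cell} complexes --- this is exactly what makes the $m=1$ case (the one missed by \prettyref{cor:Snk-test}) go through, via the cell complex $Z$ rather than any simplicial refinement; everything else is bookkeeping with results already in hand.
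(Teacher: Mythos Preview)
Your proof is correct and follows essentially the same route as the paper: the key observation is precisely that the boundary of the $(k{+}1)$-cross polytope is the barycentric subdivision of the regular CW sphere~$Z$ with two cells in each dimension, so that $X^k_m$ is a second barycentric subdivision of the free $\Z_2$-cell complex $\operatorname{sd}^{m-1}Z$ and \prettyref{prop:col-n-compl} applies for every $m\ge1$; the remaining inequalities then come from \prettyref{eq:snk-test} exactly as you indicate. Your write-up simply supplies more detail than the paper's (in particular the explicit identification $\operatorname{sd}(Z)\cong\partial\diamond^{k+1}$ via chains versus antipode-free vertex sets), but the argument is the same.
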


\begin{proof}
Since the boundary of the $(k+1)$-dimensional cross polytope is the
barycentric subdivision of a free $\Z_2$-cell complex with two cells
in each dimension from $0$ to $k$, \prettyref{prop:col-n-compl} implies
$\chi(K_2 \times_{{\mathbb Z}_2} F(X^k_m)^1)\le n+2$.  We have already seen that
the rest is a consequence of~\prettyref{eq:snk-test}.
\end{proof}

\begin{rem}\label{rem:HomK2Skm}
One might ask if for example it would have been enough to know the
connectivity of $\Hom(K_2, S_{k,m})$ to establish $\chi(S_{k,m})\ge k+2$.  Indeed, for $m\ge1$ this is sufficient, since in this case
a Theorem of Csorba (here \prettyref{thm:univ2}, see the remark there)
applies and yields
$\real{\Hom(K_2,S_{k,m})}\homot_{\Z_2}\Sphere^k$.
\end{rem}

\subsection{Twisted toroidal graphs}\label{sec:toroidal}
In this section we let $P$ be a $({\mathbb Z}_2 \times {\mathbb Z}_2$)-triangulation of a 1-sphere, and consider the case of repeatedly applying the $\qm \times_{{\mathbb Z}_2} P^1$ construction.  In this case, we have a particularly simple description of these graphs in mind.  We define $\Cm$ to be the face poset of a $2m$-gon with vertex set
$\set{0,\dots,2m-1}$, with the antipodal left action given by $i \mapsto i + m$ (mod $2m$) and the
reflection right action by $i \mapsto 2m - 1 - i$ (mod $2m$).
This yields $\real{\Cm}\homeo_{{\mathbb Z}_2 \times {\mathbb Z}_2}
{\mathbb S}^1_b$.  In \prettyref{thm:firstentry} and \prettyref{thm:secondentry} we take a
quotient of the product of graphs in the context of the ${\mathbb
  Z}_2$-action on $P^1$.  In the case $P=\Cm$, we will want
to consider the graphs of the sort $T \times_{{\mathbb Z}_2}
\Cpm$ (see \prettyref{fig:K2C3} for the case of $T = K_2$ with the
nontrivial ${\mathbb Z}_2$-action).

\begin{figure}[!ht]
\begin{center}
\input{K2C3.pspdftex}
\caption{The graphs $K_2, \Cptx6$, and $T_{1,3} \deq K_2 \times_{{\mathbb
Z}_2} \Cptx6 $.
}\label{fig:K2C3}
\end{center}
\end{figure}

Iterating this construction gives the following family of `twisted toroidal' graphs.

\begin{defn}[Twisted toroidal graphs] \label{defn:Tgraphs}
For integers $k\ge1$, $m\ge2$ we define the graph
\begin{center}
$T_{k,m} \deq K_2 \underbrace{\times_{{\mathbb Z}_2} \Cpm
\times_{{\mathbb Z}_2} \cdots \times_{{\mathbb Z}_2}
\Cpm}_{k-\textrm{times}}$.
\end{center}
\end{defn}
The example in \prettyref{fig:K2C3} is $T_{1,3}$.  We point out that each
$T_{k,m}$ is a graph without loops, since $K_2$ has no loops.  As was the case with the spherical graphs in the previous section, \prettyref{prop:main} gives us the following.

\begin{prop}~\label{prop:mainprop1}
Let $T$ be a graph with a (right) ${\mathbb Z}_2$-action and let $G$
be a graph.  Assume that
$\Hom(T \times_{{\mathbb Z_2}}
\Cpm,G)\ne\emptyset$.
Then for $m \geq 2$ we have
\[\Ht_{{\mathbb Z}_2}\bigl(\Hom(T \times_{{\mathbb Z_2}}
\Cpm,G)\bigr) + 1 \leq \Ht_{{\mathbb Z}_2}
\bigl(\Hom(T,G)\bigr).\]
In particular if $\Hom(T_{k,m}, G) \neq \emptyset$ for some $k \geq 1$ and $m \geq 2$, we have
\[\Ht_{{\mathbb Z}_2}\bigl(\Hom(T_{k,m}, G)\bigr)+k+2\le \Ht_{{\mathbb Z}_2}\bigl(\Hom(K_2,G)\bigr)+2\le\chi(G).\]
\end{prop}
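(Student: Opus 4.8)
The plan is to deduce \prettyref{prop:mainprop1} directly from \prettyref{prop:main} by specializing to the case $P = \Cm$. First I would observe that the $2m$-gon $\Cm$, equipped with the antipodal left $\Z_2$-action $i \mapsto i+m$ and the reflection right action $i \mapsto 2m-1-i$, is precisely the face poset of a $(\Z_2 \times \Z_2)$-triangulation of $\Sphere^1_b$; this is the homeomorphism $\real{\Cm} \homeo_{\Z_2 \times \Z_2} \Sphere^1_b$ already recorded in the discussion opening \prettyref{sec:toroidal}, and it is exactly the hypothesis needed to invoke \prettyref{prop:main} with $k=1$. Applying \prettyref{prop:main} to this $P$ (so that $P^1 = \Cpm$) with the given $\Z_2$-graph $T$ and graph $G$, and using the standing assumption $\Hom(T \times_{\Z_2} \Cpm, G) \ne \emptyset$, immediately yields the first displayed inequality
\[
\htt\bigl(\Hom(T \times_{\Z_2} \Cpm, G)\bigr) + 1 \le \htt\bigl(\Hom(T, G)\bigr).
\]

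For the second assertion I would argue by induction on $k$. Write $T_{j,m} = K_2 \times_{\Z_2} \Cpm \times_{\Z_2} \cdots \times_{\Z_2} \Cpm$ ($j$ factors), so $T_{j,m} = T_{j-1,m} \times_{\Z_2} \Cpm$ where the $\Z_2$-action on $T_{j-1,m}$ is the one flipping the $K_2$-edge (one should check, or cite the earlier setup, that this twisted product is associative in the appropriate equivariant sense, so that the iterated construction is unambiguous and each $T_{j,m}$ carries a well-defined free $\Z_2$-action). Given a graph $G$ with $\Hom(T_{k,m}, G) \ne \emptyset$, the graph maps $T_{k,m} \to T_{k-1,m} \to \cdots \to T_{1,m} \to K_2$ (induced by projecting away one $\Cpm$ factor at a time, together with $K_2 \times_{\Z_2} \Cpm \to K_2$) ensure that $\Hom(T_{j,m}, G) \ne \emptyset$ for every $j \le k$, so the first inequality applies at each stage with $T = T_{j-1,m}$. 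Telescoping the resulting chain
\[
\htt\bigl(\Hom(T_{j,m}, G)\bigr) + 1 \le \htt\bigl(\Hom(T_{j-1,m}, G)\bigr), \qquad 1 \le j \le k,
\]
gives $\htt(\Hom(T_{k,m},G)) + k \le \htt(\Hom(K_2, G))$. Adding $2$ to both sides and appealing to \prettyref{thm:Lovtheorem} (via $\htt \le \ind$, or directly from the Stiefel--Whitney bound $\htt\Hom(K_2,G) + 2 \le \chi(G)$ already used in the proof of \prettyref{prop:main}) yields
\[
\htt\bigl(\Hom(T_{k,m}, G)\bigr) + k + 2 \le \htt\bigl(\Hom(K_2, G)\bigr) + 2 \le \chi(G),
\]
as required.

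The only genuinely delicate point is the bookkeeping for the iterated twisted product: one must be careful that in $T_{j-1,m} \times_{\Z_2} \Cpm$ the residual $\Z_2$-action (the one used in the next application of \prettyref{prop:main}) is indeed the edge-flipping action inherited from $K_2$, and that the projection homomorphisms $T_{j,m} \to T_{j-1,m}$ are $\Z_2$-equivariant and genuinely exist (this uses that $\Cpm$ is reflexive, so that the projection $\Cpm \to \ug$ followed by $T_{j-1,m} \times_{\Z_2} \ug \isom T_{j-1,m}$ is a homomorphism). Everything else is a formal telescoping of the base inequality from \prettyref{prop:main}, so I expect this associativity/equivariance check — rather than any topology — to be the main obstacle, and it is essentially notational given the constructions in \prettyref{sec:defs}.
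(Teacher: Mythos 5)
Your overall strategy is the same as the paper's: the first inequality is exactly \prettyref{prop:main} specialized to $P=\Cm$ (so $k=1$, and $P^1=\Cpm$), and the second inequality is obtained by telescoping, which is how the paper proves \prettyref{prop:Tkmtestgraph}. But the auxiliary argument you give to establish $\Hom(T_{j,m},G)\ne\emptyset$ for all $j\le k$ contains a genuine error.

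You claim there are ``projection'' homomorphisms $T_{j,m}\to T_{j-1,m}$ obtained from the constant map $\Cpm\to\ug$ together with an isomorphism $T_{j-1,m}\times_{\Z_2}\ug\isom T_{j-1,m}$. That isomorphism is false: by definition of the twisted product, $T_{j-1,m}\times_{\Z_2}\ug$ is the orbit graph $T_{j-1,m}/\Z_2$, not $T_{j-1,m}$, so the claimed projections do not exist. And even if they did exist, they would point the wrong way for your purposes --- to deduce $\Hom(T_{j-1,m},G)\ne\emptyset$ from a homomorphism $T_{j,m}\to G$ you need a map \emph{into} $T_{j,m}$ from $T_{j-1,m}$ to precompose with, not a map out of it.

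There are two standard ways to repair this, and the rest of your telescoping then goes through unchanged. First, there is an \emph{inclusion} $T_{j-1,m}\hookrightarrow T_{j,m}$: the two looped vertices of $\Cpm$ exchanged by the antipodal left action span a subgraph that is a free $\Z_2$-set, and twisting a $\Z_2$-graph by a free transitive $\Z_2$-set returns the graph itself (this is the observation the paper uses in the proof of \prettyref{prop:Tkmgraphs}); precomposing with this inclusion gives $\Hom(T_{j,m},G)\ne\emptyset\Rightarrow\Hom(T_{j-1,m},G)\ne\emptyset$. Second, and more economically, the non-emptiness cascades automatically from the height inequality itself: $\Hom(T_{j,m},G)$ nonempty and free implies $\htt\Hom(T_{j,m},G)\ge0$, so the first inequality forces $\htt\Hom(T_{j-1,m},G)\ge1$ and in particular $\Hom(T_{j-1,m},G)\ne\emptyset$, allowing the next application of \prettyref{prop:mainprop1}. (Your description of the residual right $\Z_2$-action on $T_{j-1,m}$ as ``the one flipping the $K_2$-edge'' is also imprecise for $j\ge2$: it is the reflection action inherited from the rightmost $\Cpm$ factor. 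This does not affect the argument, since that action still flips an edge, as required.)
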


To show that the $T_{k,m}$ are indeed Stiefel-Whitney test graphs we have to show that they have the desired chromatic number.  To establish this we will show that under certain additional assumptions, the construction $T\mapsto T\times_{\Z_2}\Cpm$ raises the
chromatic number by exactly one.  These additional assumptions will be
fulfilled for example when we pass from $T_{k,m}$ to $T_{k+1,m}$.

\begin{lem}\label{lem:colorTkm}
Let $T$ be a graph with a right $\Z_2$-action and $n\ge0$, $m\ge3$.
If there is
an equivariant graph homomorphism
\[T\to_{\Z_2}K_{n+2},\]
where the ${\mathbb Z}_2$-action on $K_{n+2}$ is given by
exchanging the vertices $1$ and $2$, and leaving all other vertices
fixed, then there is an equivariant graph homomorphism
\[T \times_{{\mathbb Z}_2} \Cpm\to_{\Z_2} K_{n+3}\]
with the $\Z_2$-action on $K_{n+3}$ as that on~$K_{n+2}$.
\end{lem}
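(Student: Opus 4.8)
The plan is to construct the desired equivariant coloring $T\times_{\Z_2}\Cpm\to_{\Z_2}K_{n+3}$ explicitly, by combining a given equivariant coloring of $T$ (the ``old'' colors $\{1,\dots,n+2\}$) with a scheme for coloring the $\Cpm$-coordinate that uses the single new color $n+3$ sparingly and symmetrically. Recall that the vertices of $\Cpm=\Cm^1$ are the atoms of the face poset of the $2m$-gon, i.e. its vertices $\{0,\dots,2m-1\}$ together with its edges, and that two such atoms are adjacent in $\Cpm$ iff they are incident in the $2m$-gon (so each vertex $i$ has a loop, and each edge is adjacent to its two endpoints and to itself). The $\Z_2$-action on the $\Cm$-factor is the antipodal map $i\mapsto i+m$, and the action we are using on $T\times_{\Z_2}\Cpm$ is the diagonal reflection one; I will need to keep careful track of which $\Z_2$ is which, since a point of $T\times_{\Z_2}\Cpm$ is an orbit $[(t,x)]$ under the reflection action and the residual $\Z_2$-action is induced by the antipodal map on the $\Cm$-coordinate (equivalently by the edge-flip on $T$).

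First I would fix an equivariant homomorphism $f\colon T\to_{\Z_2}K_{n+2}$ and pick a fundamental domain for the reflection action on $\Cpm$ — concretely the atoms ``between $0$ and $m$'', namely the vertices $0,1,\dots,m$ and the edges $\{0,1\},\{1,2\},\dots,\{m-1,m\}$, with the two boundary vertices $0$ and $m$ being the reflection-fixed atoms (here $m\ge3$ guarantees there is genuine room in the interior). Then I would define a coloring $c$ on $T\times_{\Z_2}\Cpm$ as follows: on orbits $[(t,x)]$ where $x$ lies in the ``interior'' of this fundamental domain I set $c([(t,x)])=f(t)\in\{1,\dots,n+2\}$; on orbits where $x$ is one of the two reflection-fixed atoms $0$ or $m$, the edge-flip of $T$ forces $f(t)$ to jump between the swapped pair $\{1,2\}$ as we cross, so there I assign the new color $n+3$ to break the conflict — but I only do this on a thin ``slice'' around each fixed atom, alternating so that the new color is never used on two adjacent atoms of $\Cpm$ and so the assignment commutes with the antipodal map $0\leftrightarrow m$.

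The key steps, in order: (1) write down $c$ precisely as a function on $V(K_2)\times V(\Cpm)$ that is constant on reflection-orbits; (2) check $\Z_2$-equivariance, i.e. that $c$ is invariant under the antipodal map on $\Cm$ (using that $f$ commutes with the edge-flip of $T$ and that the fixed-atom slices were chosen antipodally symmetric); (3) check that $c$ is a graph homomorphism to $K_{n+3}$: adjacent atoms $[(t,x)]\sim[(t',x')]$ come either from $x,x'$ both in one fundamental domain with $t\sim t'$ in $T$ — handled by $f$ being a homomorphism — or from $x,x'$ straddling a reflection-fixed atom, where one of the two is colored $n+3$ and the other is colored in $\{1,\dots,n+2\}$, so they differ; the loops on vertices of $\Cpm$ are harmless because $K_{n+3}$'s adjacency on a single color class is vacuous only if... wait, $K_{n+3}$ has no loops, so I must ensure no atom is adjacent to itself after coloring — but the looped vertices of $\Cpm$ give $[(t,i)]\sim[(t,i)]$ only when $t\sim t$ in $T$, i.e. never, since $T\to_{\Z_2}K_{n+2}$ forces $T$ loopless; so this is fine.

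The main obstacle I expect is step (2)–(3) interacting at the reflection-fixed atoms: getting the ``new color slice'' wide enough to absorb the unavoidable $1\leftrightarrow2$ discontinuity of $f\circ(\text{crossing the fixed atom})$, yet narrow and alternating enough that $n+3$ is never repeated on adjacent atoms and the whole thing stays antipodally symmetric. This is exactly where the hypothesis $m\ge3$ is used. Once the combinatorics of that slice is pinned down the rest is a routine case check, and the conclusion is the asserted equivariant homomorphism $T\times_{\Z_2}\Cpm\to_{\Z_2}K_{n+3}$, with the $\Z_2$-action on $K_{n+3}$ still swapping $1$ and $2$ and fixing everything else (in particular fixing the new color $n+3$), as required.
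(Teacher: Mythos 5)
Your proposal heads in a direction that is quite different from the paper's proof, and unfortunately it contains both substantive confusions in the setup and a genuine gap at its core.

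First, the setup. The graph $\Cpm = \Cm^1$ has as vertices only the \emph{atoms} of the face poset of the $2m$-gon, i.e.\ the $2m$ polygon vertices $\{0,\dots,2m-1\}$ (with a loop on each, and $i\sim j$ iff $|i-j|\le 1$ mod $2m$). The edges of the polygon are not atoms and are not vertices of $\Cpm$, contrary to what you write. Second, the roles of the two $\Z_2$-actions are reversed in your account. In the twisted product $T\times_{\Z_2}\Cpm$ one quotients by the pairing of the right action on~$T$ with the \emph{left antipodal} action $i\mapsto i+m$ on $\Cpm$; the residual $\Z_2$-action on the quotient is the \emph{reflection} $i\mapsto 2m-1-i$. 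You state the opposite. Third, the reflection $i\mapsto 2m-1-i$ fixes no vertices of $\Cpm$ at all (it fixes only the two polygon edges $\{m-1,m\}$ and $\{2m-1,0\}$, which are not vertices of $\Cpm$); your claim that the vertices $0$ and $m$ are reflection-fixed is incorrect and the proposed ``fundamental domain'' is not one for the relevant action. Finally, and most importantly, the combinatorial heart of the argument --- specifying the ``new colour slice'' near the symmetry axis so that it is equivariant, never repeats $n+3$ on adjacent vertices, and actually yields a homomorphism --- is exactly what you do not carry out. You acknowledge this is the difficult step and defer it; but it is not routine, and until it is pinned down, the proof is not complete.

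For comparison, the paper sidesteps all of this by a short categorical argument. It observes that there is a $(\Z_2\times\Z_2)$-equivariant folding homomorphism $\Cpm\to\Cptx6$ for every $m\ge 3$, so it suffices to handle $m=3$; that the subgraph induced on the looped vertices of $K_3^{K_2}$ is a hexagon, giving a $(\Z_2\times\Z_2)$-map $\Cptx6\to K_3^{K_2}$; that extending by $i\mapsto i+1$ on vertices $i>2$ gives a $(\Z_2\times\Z_2)$-map $K_3^{K_2}\to K_{n+3}^{K_{n+2}}$; and that the given equivariant colouring $T\to_{\Z_2}K_{n+2}$ induces $K_{n+3}^{K_{n+2}}\to K_{n+3}^T$. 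Composing and applying the exponential adjunction converts the resulting $(\Z_2\times\Z_2)$-equivariant map $\Cpm\to K_{n+3}^T$ into the desired equivariant homomorphism $T\times_{\Z_2}\Cpm\to_{\Z_2}K_{n+3}$. Packaging the colouring through the exponential graph is what lets one avoid the delicate case analysis near the reflection axis that your approach runs into.
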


\begin{proof}
First note that for any $m \geq 3$ we have a $({\mathbb Z}_2 \times
{\mathbb Z}_2)$-equivariant homomorphism $\Cpm \rightarrow \Cptx6$
given by $0 \mapsto 0$, $i \mapsto 1$ for $0 < i < m-1$, $m-1 \mapsto
2$, $m \mapsto 3$, $j \mapsto 4$ for $m < j < 2m-1$, and $2m-1 \mapsto
5$.  The looped vertices of $K_3^{K_2}$ induce a 6-cycle and can be identified with $\Cptx6$, and hence we have a $({\mathbb Z}_2 \times {\mathbb
  Z}_2)$-homomorphism $\Cpm \rightarrow K_3^{K_2}$.  We also have a
$({\mathbb Z}_2 \times {\mathbb Z}_2)$ homomorphism $K_3^{K_2}
\rightarrow K_{n+3}^{K_{n+2}}$ given by extending any element of
$K_3^{K_2}$ to $i \mapsto i+1$ for any vertex $i > 2$ (this is
equivariant since both actions are trivial on these vertices).  Now,
assume that $n+2 = N$ and that we have an
equivariant coloring $T \rightarrow K_{n+2}$.  This gives us
\[\Cpm \rightarrow K_3^{K_2} \rightarrow K_{n+3}^{K_{n+2}} \rightarrow K_{n+3}^T,\]
a sequence of $({\mathbb Z}_2 \times {\mathbb Z}_2)$-equivariant homomorphisms and hence an equivariant coloring
\[T \times_{{\mathbb Z}_2} \Cpm \rightarrow_{{\mathbb Z}_2} K_{n+3}\]
\noindent
as desired.
\end{proof}

\begin{cor}
For all $k\ge0$ and $m\ge3$ we have $\chi(T_{k,m})\le k+2$.
\end{cor}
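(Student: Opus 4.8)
The plan is to prove the inequality by induction on $k$, feeding the conclusion of \prettyref{lem:colorTkm} back into its own hypothesis. Rather than tracking chromatic numbers directly, I will prove the \emph{equivariant} refinement: for every $k\ge0$ (and $m\ge3$) there is an equivariant graph homomorphism $T_{k,m}\to_{\Z_2}K_{k+2}$, where the $\Z_2$-action on $K_{k+2}$ exchanges the vertices $1$ and $2$ and fixes the rest. Since the existence of \emph{any} homomorphism $T_{k,m}\to K_{k+2}$ already yields $\chi(T_{k,m})\le k+2$, this refinement implies the corollary.

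For the base case I anchor the induction at $k=0$, reading $T_{0,m}$ as the empty twisted product $K_2$: the identity map $K_2\to K_2=K_{0+2}$ is equivariant, since the edge-flipping involution of $K_2$ is precisely the transposition of its two vertices. (Equivalently one could start at $k=1$ by applying \prettyref{lem:colorTkm} with $T=K_2$ and $n=0$.) For the inductive step, assume we are given an equivariant homomorphism $T_{k,m}\to_{\Z_2}K_{k+2}$ of the stated form. By \prettyref{defn:Tgraphs} the iterated twisted product satisfies $T_{k+1,m}=T_{k,m}\times_{\Z_2}\Cpm$, so \prettyref{lem:colorTkm}, applied with $T=T_{k,m}$ and $n=k$, produces an equivariant homomorphism $T_{k+1,m}\to_{\Z_2}K_{k+3}$ with the $\Z_2$-action on $K_{k+3}$ again transposing $1$ and $2$. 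This is exactly the hypothesis needed at the next stage, so the induction closes and we obtain $\chi(T_{k+1,m})\le k+3$.

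There is no real difficulty here beyond bookkeeping; the one point that must be checked with care is that the right $\Z_2$-graph structure carried by $T_{k,m}\times_{\Z_2}\Cpm$ when the product is formed iteratively coincides with the structure on $T_{k+1,m}$ named in \prettyref{defn:Tgraphs}, and that the action on the target produced by \prettyref{lem:colorTkm} is literally the transposition fixing all vertices $\ge3$ — so that the lemma's conclusion matches its hypothesis verbatim and the induction is self-sustaining. Together with \prettyref{prop:mainprop1} (which, by specializing $G=T_{k,m}$, supplies the reverse inequality $\chi(T_{k,m})\ge k+2$), this corollary shows $\chi(T_{k,m})=k+2$, and hence, again by \prettyref{prop:mainprop1}, that each $T_{k,m}$ with $m\ge3$ is a Stiefel-Whitney test graph.
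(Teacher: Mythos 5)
Your proof is correct and matches the paper's intended argument: the paper's proof is the single line ``Induction on $k$,'' which plainly points at \prettyref{lem:colorTkm}, and you have filled in exactly the details that line suppresses. The one point worth highlighting is that you correctly identified the subtlety that makes the induction close, namely that the conclusion of \prettyref{lem:colorTkm} must hand back a graph carrying a right $\Z_2$-action together with an equivariant homomorphism to a $K_{n+3}$ whose $\Z_2$-action is literally the $(1\,2)$-transposition, so that it re-enters the lemma's hypothesis unchanged; this is indeed the case, since the twisted product $T\times_{\Z_2}\Cpm$ retains the residual right $\Z_2$-action from the reflection on $\Cpm$.
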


\begin{proof}
Induction on $k$.
\end{proof}

With this machinery in place we can state our main result of this section.

\begin{prop}\label{prop:Tkmtestgraph}
Let $k\ge0$, $m\ge2$.  Then $T_{k,m}$ is a Stiefel-Whitney test graph of chromatic number~$k+2$.  In particular we have,
\[\Ht_{{\mathbb Z}_2} \Hom(T_{k,m}, G) + k \leq
\Ht_{{\mathbb Z}_2} \Hom(K_2, G) \leq \chi(G) -2\]
for every graph~$G$ such that $ \Hom(T_{k,m}, G)\ne\emptyset$.
\end{prop}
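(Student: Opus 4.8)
The statement has two parts: the value $\chi(T_{k,m})=k+2$, and the displayed chain of inequalities, which already encodes that $T_{k,m}$ is a Stiefel--Whitney test graph. The plan is to settle the chromatic number first and then read off the inequalities. The case $k=0$ is immediate, as $T_{0,m}=K_2$ with its edge-flipping involution, so assume $k\ge1$.

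For $\chi(T_{k,m})\le k+2$ I would invoke the corollary following \prettyref{lem:colorTkm}, which is proved by induction on~$k$: the base case is the equivariant identity coloring $K_2\to_{\Z_2}K_2$, with $\Z_2$ exchanging the two vertices of $K_2$; and the inductive step is \prettyref{lem:colorTkm} applied with $n=k$, turning an equivariant coloring $T_{k,m}\to_{\Z_2}K_{k+2}$ — with $\Z_2$ swapping two vertices of $K_{k+2}$ and fixing the rest — into an equivariant coloring $T_{k+1,m}=T_{k,m}\times_{\Z_2}\Cpm\to_{\Z_2}K_{k+3}$ of the same form, so that the induction hypothesis propagates.

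For $\chi(T_{k,m})\ge k+2$ I would specialize \prettyref{prop:mainprop1} to $G=T_{k,m}$, by the same device used at the end of the proof of \prettyref{prop:main}. The complex $\Hom(T_{k,m},T_{k,m})$ is nonempty (it contains the identity homomorphism) and carries a free $\Z_2$-action, because $T_{k,m}$ is loopless — it is built from the loopless graph $K_2$ by $\times_{\Z_2}$-products — and its defining involution flips an edge; hence $\Ht_{\Z_2}\Hom(T_{k,m},T_{k,m})\ge\coind_{\Z_2}\Hom(T_{k,m},T_{k,m})\ge0$. Feeding $G=T_{k,m}$ into the chain $\Ht_{\Z_2}\Hom(T_{k,m},G)+k+2\le\chi(G)$ of \prettyref{prop:mainprop1} then gives $\chi(T_{k,m})\ge k+2$, so $\chi(T_{k,m})=k+2$.

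It remains to assemble the inequalities. For any $G$ with $\Hom(T_{k,m},G)\ne\emptyset$, \prettyref{prop:mainprop1} gives $\Ht_{\Z_2}\Hom(T_{k,m},G)+k+2\le\Ht_{\Z_2}\Hom(K_2,G)+2$; subtracting $2$ yields the first displayed inequality, and $\Ht_{\Z_2}\Hom(K_2,G)\le\ind_{\Z_2}\Hom(K_2,G)\le\chi(G)-2$ by \prettyref{thm:Lovtheorem} yields the second. Substituting $\chi(T_{k,m})=k+2$ into the combined chain turns it into $\chi(G)\ge\Ht_{\Z_2}\Hom(T_{k,m},G)+\chi(T_{k,m})$, i.e.\ into \prettyref{eq:testgraph} for $T=T_{k,m}$, so $T_{k,m}$ is a Stiefel--Whitney test graph. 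Thus the proposition is mostly bookkeeping on top of \prettyref{prop:mainprop1}, \prettyref{lem:colorTkm}, and \prettyref{thm:Lovtheorem}; the one step carrying real content is the upper bound on $\chi$, that is, the explicit equivariant colorings inside \prettyref{lem:colorTkm} — the construction and equivariance check of the chain $\Cpm\to\Cptx{6}\hookrightarrow K_3^{K_2}\to K_{k+3}^{K_{k+2}}\to K_{k+3}^{T_{k,m}}$, and in particular the identification of the reflexive $6$-cycle of looped vertices of $K_3^{K_2}$ with $\Cptx{6}$. I would also flag that this folding $\Cpm\to\Cptx{6}$ exists only for $m\ge3$; for $m=2$ the colorability step breaks down — for instance $T_{1,2}=K_2\times_{\Z_2}\Cptx{4}\cong K_4$, which has chromatic number $4\ne k+2$ — so the chromatic-number equality, and hence the term $+k$ in the first displayed inequality, is to be read with $m\ge3$.
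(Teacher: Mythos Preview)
Your argument is essentially the paper's own---the paper's proof is even terser, recording only the base case $T_{0,m}=K_2$ and the inductive step $\htt\Hom(T_{k,m},G)+1\le\htt\Hom(T_{k-1,m},G)$ from \prettyref{prop:mainprop1}---so you have reconstructed it and filled in the bookkeeping the paper leaves implicit.

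Your caveat about $m=2$ is correct and catches a genuine slip in the statement: the upper bound $\chi(T_{k,m})\le k+2$ comes from the corollary to \prettyref{lem:colorTkm}, which is stated only for $m\ge3$, and your example $T_{1,2}\cong K_4$ shows the chromatic-number equality really fails at $m=2$. One small sharpening of your last sentence: the displayed inequality chain itself is valid for all $m\ge2$, since it follows directly from \prettyref{prop:mainprop1}; what breaks at $m=2$ is only the identification $k+2=\chi(T_{k,m})$, and with it the Stiefel--Whitney test-graph interpretation.
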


\begin{proof}
We have
$\htt\Hom(T_{0,m},G)=\htt\Hom(K_2,G)\le\htt\Hom(K_2,K_{\chi(G)})=\chi(G)-2$
and, again by \prettyref{prop:mainprop1},
$\htt\Hom(T_{k,m},G)+1\le\htt\Hom(T_{k-1,m},G)$ for $k>0$ and
$\Hom(T_{k,m},G)\ne\emptyset$.
\end{proof}

The graphs $T_{k,m}$ also have interesting properties relating their maximum degree and odd girth.

\begin{prop}\label{prop:Tkmgraphs}
Let $k\ge1$ and $m=2r+1$, $r\ge1$.  Then $T_{k,m}$ has chromatic number~$k+2$, odd girth~$m$, and every vertex has
degree~$3^k$.
\end{prop}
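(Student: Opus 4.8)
The plan is to verify the three claimed invariants — chromatic number, odd girth, and degree regularity — separately, since the chromatic number assertion is already contained in \prettyref{prop:Tkmtestgraph} (with $m=2r+1 \ge 3$ the hypotheses $k\ge 0$, $m\ge 2$ are met). So the real content is the odd girth and the degree count, and these I would attack by induction on $k$, using the structure of the twisted product $T \mapsto T \times_{\Z_2}\Cpm$. For the degree statement: a vertex of $T\times_{\Z_2}\Cpm$ is an orbit $[(t,i)]$ with $t\in V(T)$ and $i\in\{0,\dots,2m-1\}$; since the $\Z_2$-action on $\Cpm = {\mathcal C}_{2m}^1$ pairs $i$ with $2m-1-i$ and (for the relevant graphs) acts freely, one can normalize so that the fiber over each $t$ contributes exactly $2m$ vertices up to the identification, and a neighbor of $[(t,i)]$ is obtained by choosing $t'\sim t$ in $T$ and $i'\sim i$ in $\Cpm$. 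The reflexive graph $\Cpm$ on $\{0,\dots,2m-1\}$ (the looped $1$-skeleton of a $2m$-gon) has each vertex adjacent to exactly $3$ vertices — itself and its two cycle-neighbors. The key point is that distinct choices $(t',i')$ give distinct orbits $[(t',i')]$ as neighbors, which needs that the action on $\Cpm$ is sufficiently discontinuous (here $2m-1 \ge 5$, so radius-$2$ neighborhoods are orbit-disjoint); granting this, $\deg_{T\times_{\Z_2}\Cpm}[(t,i)] = \deg_T(t)\cdot 3$. Starting from $K_2$, where every vertex has degree $1$, induction gives degree $3^k$ in $T_{k,m}$. I expect the bookkeeping that the $2m$ representatives in a fiber are genuinely distinct vertices, and that no two neighbor-choices collapse, to be the fussy part — this is exactly where one cites discontinuity of the $\Z_2$-action on $\Cpm$ (via \prettyref{def:discont} and the fact that a $2m$-gon with $2m = 4r+2 \ge 10$ is fine enough).

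For the odd girth: I would show $T_{k,m}$ contains an odd cycle of length $m$ and no shorter one. The lower bound (no odd cycle shorter than $m$) should follow from the chromatic/topological side together with a girth argument on the product — more precisely, a closed walk in $T\times_{\Z_2}\Cpm$ projects to a closed walk in $\Cpm$, and a closed walk of odd length in $\Cpm$ must traverse the underlying $2m$-gon an odd number of times around, so (after accounting for the loops, which let one "stall" but not change parity of the winding) its length is at least $m$; one also checks that the $K_2$-factor forces the relevant parity behavior. For the upper bound one exhibits an explicit odd closed walk of length exactly $m$: in $\Cpm$ itself the $2m$-gon has an odd closed walk of length $m$ realized by going halfway around and using a loop-assisted turnaround (the reflection relates $i$ and $2m-1-i$, and $[(t,i)] \sim [(t', 2m-1-i)]$ for $t'\sim t$), and one lifts this through the twisted product starting from the edge of $K_2$. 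Concretely, the length-$m$ odd cycle in $T_{1,m}$ visible in \prettyref{fig:K2C3} (the outer $m$-cycle $A0,A1,\dots$) generalizes, and in $T_{k,m}$ one uses the $m$-cycle in the last $\Cpm$-factor combined with constant coordinates elsewhere, noting that constancy is legitimate because $\Cpm$ is reflexive.

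The main obstacle I anticipate is the odd-girth \emph{lower} bound: showing there is no odd closed walk of length $<m$. The degree count and the upper bound on girth are essentially explicit constructions plus routine induction, but ruling out short odd cycles requires understanding how the $\Z_2$-quotient interacts with parity of walks. The clean way is to pass to the universal-cover-type picture: an odd closed walk in $T\times_{\Z_2}\Cpm$ of length $\ell$ lifts (or rather, its projection to the $\Z_2$-quotient of the $\Cpm$-coordinate) to data in $T\times\Cpm$, and the obstruction to closing up is exactly the nontrivial element of $\Z_2$, which is detected by the winding number mod $2$ around the $2m$-gon; since each step changes the "position on the $2m$-gon" by at most $1$, achieving net winding that is odd (forced by the twisting together with oddness of $\ell$, propagated through all $k$ factors and the $K_2$) costs at least $m$ steps. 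I would make this precise by induction on $k$, with the base case $T_{0,m}=K_2$ (odd girth $\infty$, vacuously $\ge m$ in the sense that it has no odd cycles, and the twisting with the first $\Cpm$ is what creates odd cycles of length exactly $m$) and the inductive step tracking how odd closed walks in $T\times_{\Z_2}\Cpm$ decompose into an odd closed walk in $T$ (of length $\ge m$ or trivial) together with a compensating walk in $\Cpm$ whose length is then $\ge m$ precisely when the $T$-walk is trivial. The hypothesis $m$ odd (i.e. $m = 2r+1$) is used here: for even $2m$-gons with $m$ even the parity analysis degenerates, which is why the statement is restricted to odd~$m$.
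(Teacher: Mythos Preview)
Your overall architecture --- induction on $k$ for degree and odd girth, citing \prettyref{prop:Tkmtestgraph} for the chromatic number --- matches the paper. Two points deserve attention.

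First, a factual slip: the $\Z_2$-action on $\Cpm$ that is quotiented out in $T\times_{\Z_2}\Cpm$ is the \emph{left} (antipodal) action $i\mapsto i+m$, not the reflection $i\mapsto 2m-1-i$. (The reflection is the right action, which survives on the quotient and furnishes the $\Z_2$-structure on $T_{k+1,m}$.) This does not change the structure of your degree argument, but several of your adjacency formulas and the ``$2m\ge10$'' remark need adjustment; for $r\ge1$ one has $2m\ge6$, and the antipodal action on a $2m$-gon is already $m$-discontinuous, which is what you actually need.

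Second, for the odd-girth lower bound your winding/parity sketch is on the right track but stated confusedly: the claim that an odd closed walk in $\Cpm$ must wind around is false as written (loops allow odd closed walks of any length with no winding). The correct version of your lifting idea is: lift a closed walk of length $\ell<m$ in $T_{k+1,m}$ to $T_{k,m}\times\Cpm$; the endpoints agree up to the diagonal $\Z_2$, and since each step moves the $\Cpm$-coordinate by at most~$1$, the nontrivial case (endpoint at $i_0+m$) forces $\ell\ge m$, while the trivial case gives an odd closed walk in $T_{k,m}$. The paper packages this more cleanly: a cycle of length $<m$ visits fewer than $m$ vertices, hence sits in a subgraph of $T_{k+1,m}$ of the form $T_{k,m}\times\mathcal P_{m-1}^1$ (product with a looped path), which projects to $T_{k,m}$. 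This avoids the winding bookkeeping entirely. For the upper bound the paper observes that the two vertices $r,r+m\in\Cpm$ are swapped by \emph{both} actions, so restricting to them embeds $T\hookrightarrow T\times_{\Z_2}\Cpm$ $\Z_2$-equivariantly; iterating gives $T_{1,m}\subset T_{k,m}$, and $T_{1,m}$ visibly contains an $m$-cycle.
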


\begin{proof}
The equality $\chi(T_{k,m})=k+2$ has already been established.

Every vertex of $T_{0,m}=K_2$ has
degree~$1$ and every vertex of~$\Cpm$ has degree~$3$, it follows
inductively that every vertex of $T_{k,m}$ has degree~$3^k$.

Consider a cycle of length less than~$m$
in~$T_{k+1,m}=T_{k,m}\times_{\Z_2}\Cpm$. This is contained in a
subgraph isomorphic to $T_{k,m}\times \mathcal P_{m-1}^1$, where $\mathcal
P_{m-1}$ is the face poset of a triangulation of an interval with
$m$~vertices.  Since projection yields a graph homomorphism $T_{k,m}\times
\mathcal P_{m-1}^1\to T_{k,m}$ we see that if $T_{k,m}$ contains no odd
cycle of length less than~$m$ then neither does $T_{k+1,m}$.  Since
$T_{0,m}$ contains no odd cycle at all, this proves that the odd girth
of~$T_{k,m}$ is at least~$m$.

The two looped vertices $r$ and $r+m$ of $\Cpm$ are exchanged by both
actions.  They yield a subgraph of $T\times_{\Z_2}\Cpm$ which is
$\Z_2$-isomorphic to~$T$.  Hence for $k\ge1$ the graph $T_{k,m}$
contains a copy of the graph $T_{1,m}$, which contains a cycle of
length $m$ (in \prettyref{fig:K2C3} consider $A0,B1,A2,B3=A0$).
Therefore the odd girth of $T_{k,m}$ is exactly~$m$.
\end{proof}

\begin{rem}
Another way to obtain graphs with large chromatic number and large odd girth
is via the generalized Mycielski construction, see
\prettyref{sec:mycielski}.
However, for these graphs the maximal
degree will go to infinity when the odd girth goes to infinity, while
the maximal degree of $T_{k,m}$ is independent of~$m$.
We will make use of this property again in \prettyref{sec:Lovconj}.
\end{rem}

In \prettyref{prop:coind} of the previous section, we saw how homomorphisms from spherical graphs were related to the coindex of $\Hom$ complexes.  Here we can obtain a similar inequality in the context of twisted toroidal graphs.

\begin{prop}\label{prop:mainprop2}
Let $T$ be a graph with a (right) ${\mathbb Z}_2$-action, let $G$
be a graph and $m \geq 2$.
If $\coind_{{\mathbb Z}_2} \bigl(\Hom(T,G)\bigr) \geq
1$, then
\begin{align*}
\lim_{m \to \infty} \coind_{{\mathbb Z}_2}\bigl(\Hom(T
\times_{{\mathbb Z}_2} \Cpm, G)\bigr) + 1 &\geq \coind_{{\mathbb
Z}_2} \bigl(\Hom(T,G)\bigr).
\end{align*}
\end{prop}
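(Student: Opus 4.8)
The plan is to imitate the proof of \prettyref{thm:limitSch}: rewrite the colimit of the complexes $\Hom(T\times_{\Z_2}\Cpm,G)$ as a twisted mapping space and bound the equivariant coindex of that mapping space directly. Write $c\deq\coind_{\Z_2}\Hom(T,G)$ and recall that we assume $c\ge1$. The graphs $\Cpm$ form a linear direct system, so applying $\Hom(\qm,G)$ produces a direct system of $\Z_2$-spaces $\Hom(T\times_{\Z_2}\Cpm,G)$ (the $\Z_2$-action being induced by the residual involution of $T\times_{\Z_2}\Cpm$, cf.\ \prettyref{fig:K2C3}), whose bonding maps are $\Z_2$-equivariant; hence the coindices are nondecreasing in $m$ and $\lim_{m}\coind_{\Z_2}\Hom(T\times_{\Z_2}\Cpm,G)=\coind_{\Z_2}\bigl(\colim_{m}\Hom(T\times_{\Z_2}\Cpm,G)\bigr)$, since a $\Z_2$-map from a sphere into a sequential homotopy colimit factors through a finite stage.

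By \prettyref{thm:firstentry} we have $\Hom(T\times_{\Z_2}\Cpm,G)\homot_{\Z_2}\Poset_{\Z_2}(\Cm,\Hom(T,G))$, where $\Poset_{\Z_2}$ refers to maps equivariant for the antipodal action on $\Cm$ (matched with the involution of $T$) and the residual $\Z_2$ acts by precomposition with the reflection of $\Cm$. Since $\real{\Cm}\homeo_{\Z_2\times\Z_2}\Sphere^1_b$, with the antipodal action free and the reflection fixing two points, the realizations of these posets approximate, as $m\to\infty$, the mapping space $\Map_{\Z_2}\bigl(\Sphere^1_b,\real{\Hom(T,G)}\bigr)$ of maps equivariant for the antipodal (carried to the involution of $\Hom(T,G)$), equipped with the $\Z_2$-action coming from the reflection of $\Sphere^1_b$; that is,
\[\colim_{m}\Hom(T\times_{\Z_2}\Cpm,G)\;\homot_{\Z_2}\;\Map_{\Z_2}\bigl(\Sphere^1_b,\real{\Hom(T,G)}\bigr).\]
This is the analogue of \prettyref{thm:limitSch} for the twisted toroidal graphs, proved by the same equivariant simplicial-approximation argument as in \cite{Schspace} (in the same spirit as the proof of \prettyref{prop:coind}). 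Granting it, the proposition reduces to the inequality
\[\coind_{\Z_2}\Map_{\Z_2}\bigl(\Sphere^1_b,X\bigr)\ge\coind_{\Z_2}X-1\]
for every free $\Z_2$-space $X$.

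This last inequality I would prove by an explicit construction. Let $\tau$ be the involution of $X$, let $c=\coind_{\Z_2}X\ge1$, and pick an equivariant map $f\colon\Sphere^c\to X$; writing $\Sphere^c=\Sigma\Sphere^{c-1}$ with suspension coordinates $[y,t]$ ($y\in\Sphere^{c-1}$, $t\in[-1,1]$), equivariance reads $f([-y,-t])=\tau f([y,t])$, and the poles satisfy $f([y,-1])=\tau f([y,1])$. Using an angular coordinate $\theta$ on $\Sphere^1_b$ in which the reflection is $\theta\mapsto-\theta$ and the antipodal is $\theta\mapsto\theta+\pi$, define $h\colon\Sphere^{c-1}\to\Map\bigl(\Sphere^1_b,X\bigr)$ by
\[h(y)(\theta)=\begin{cases}f\bigl([y,\cos\theta]\bigr),&0\le\theta\le\pi,\\\tau f\bigl([y,-\cos\theta]\bigr),&\pi\le\theta\le2\pi.\end{cases}\]
A routine check (using that the suspension collapses $\Sphere^{c-1}\times\{\pm1\}$ to the poles and that $f$ is equivariant) shows that each $h(y)$ is continuous, that $h(y)(\theta+\pi)=\tau\,h(y)(\theta)$ so $h(y)\in\Map_{\Z_2}(\Sphere^1_b,X)$, and that $h(-y)=h(y)\circ(\theta\mapsto-\theta)$ so $h$ is $\Z_2$-equivariant. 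Hence $\coind_{\Z_2}\Map_{\Z_2}(\Sphere^1_b,X)\ge c-1$, which together with the second paragraph proves the proposition.

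I expect the main obstacle to be the colimit identification of the second paragraph: making the simplicial approximation $\Z_2$-equivariant, and checking that the natural maps $\real{\Poset_{\Z_2}(\Cm,\Hom(T,G))}\to\Map_{\Z_2}(\Sphere^1_b,\real{\Hom(T,G)})$ are eventually $[\Sphere^{c-1},\qm]_{\Z_2}$-surjective as $m\to\infty$. This is precisely the content that \prettyref{thm:limitSch} packages for $T=K_2$ and the odd cycles, and adapting it to $\Cpm$ is where the real work lies; by contrast the construction of $h$, though it is the conceptual heart, is elementary. An alternative route avoiding an explicit colimit statement is to reassociate $(T\times_{\Z_2}\Cpm)\times_{\Z_2}G^{c-1}_{m''}\cong T\times_{\Z_2}\bigl(\Cm\times_{\Z_2}F(X^{c-1}_{m''})\bigr)^1$, apply \prettyref{thm:firstentry}, observe that the realization of $\Cm\times_{\Z_2}F(X^{c-1}_{m''})$ has $\Z_2$-index at most $c$ via the same suspension map, and then invoke \prettyref{prop:coind} twice (with the graphs $T$ and $T\times_{\Z_2}\Cpm$); this again hinges on an equivariant simplicial-approximation step.
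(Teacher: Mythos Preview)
Your approach is correct, but it is considerably more elaborate than the paper's and front-loads exactly the step you yourself flag as the hard one.  The paper never proves (or needs) the colimit identification $\colim_m\Hom(T\times_{\Z_2}\Cpm,G)\homot_{\Z_2}\Map_{\Z_2}(\Sphere^1_b,\real{\Hom(T,G)})$.  Instead it works directly on the adjoint side: set $k\deq c-1$ and observe that $\Sphere^1_b\times_{\Z_2}\Sphere^{k}$ is a $(k{+}1)$-dimensional free $\Z_2$-CW complex, hence admits an equivariant map to $\Sphere^{k+1}$ and therefore to $\real{\Hom(T,G)}$ (since $\coind_{\Z_2}\Hom(T,G)\ge k{+}1$).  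One application of equivariant simplicial approximation to \emph{this} map produces, for some large~$m$, a $\Z_2$-poset~$P$ with $\real P\homeo_{\Z_2}\Sphere^{k}$ and an equivariant poset map $\Cm\times_{\Z_2}P\to_{\Z_2}\Hom(T,G)$.  Currying and applying \prettyref{thm:firstentry} yields $P\to_{\Z_2}\Poset_{\Z_2}(\Cm,\Hom(T,G))\homot_{\Z_2}\Hom(T\times_{\Z_2}\Cpm,G)$, hence $\coind_{\Z_2}\Hom(T\times_{\Z_2}\Cpm,G)\ge k$.

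The relation between the two arguments is this: the adjoint of your explicit map~$h$ is precisely a map $\Sphere^1_b\times_{\Z_2}\Sphere^{c-1}\to X$, which the paper obtains non-constructively from the dimension bound.  So your suspension formula is doing honest work that the paper replaces by a one-line obstruction-theory observation; conversely, the colimit statement you rely on is unnecessary once you have that map in hand and apply simplicial approximation \emph{before} currying rather than after.  Your ``alternative route'' in the last paragraph is in fact very close to what the paper does, and would become the paper's proof if you replaced the reassociation/\prettyref{prop:coind} step by the dimension argument.  What your longer route buys is the colimit description itself, which is of independent interest (it is the analogue of \prettyref{thm:limitSch} for general~$T$) but is not needed for the inequality in the proposition.
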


\begin{proof}
We suppose $k \geq 0$ and assume
$\coind_{{\mathbb Z}_2} \bigl(\Hom(T,G)\bigr) \geq k + 1$.  Since
${\mathbb S}^1_b \times_{{\mathbb Z}_2} {\mathbb S}^k$ is a
$(k+1)$-dimensional free ${\mathbb Z}_2$-space, there exists an
equivariant map ${\mathbb S}^1_b \times_{{\mathbb Z}_2} {\mathbb
S}^k \rightarrow_{{\mathbb Z}_2} |\Hom(T,G)|$.  We apply simplicial
approximation to this map and obtain, for some $m \gg 0$, a
${\mathbb Z}_2$-poset $P$ with $\real P\approx_{{\mathbb Z}_2} {\mathbb
S}^k$, and an equivariant poset map
\[\Cm \times_{{\mathbb Z}_2} P
\rightarrow_{{\mathbb Z}_2} \Hom(T,G),
\]
and hence by adjunction an equivariant poset map
\[
P \rightarrow_{{\mathbb Z}_2} \Poset_{{\mathbb Z}_2}\bigl(\Cm,
\Hom(T,G) \bigr) \simeq_{{\mathbb Z}_2} \Hom(T \times_{{\mathbb Z}_2}
\Cpm, G),
\]
where the last equivalence is an instance of \prettyref{thm:firstentry}.
It follows that
\[\coind_{{\mathbb Z}_2
}\real{\Hom(T \times_{{\mathbb Z}_2} \Cpm, G)}
  \geq \coind_{\Z_2}\real{P}=k,
\]
and hence the desired inequality.
\end{proof}

\begin{cor}\label{cor:mapsfrom}
If $\coind_{{\mathbb Z}_2}\bigl(\Hom(K_2, G)\bigr) \geq k$ then for sufficiently large $m$ there exists a graph homomorphism $T_{k,m} \rightarrow G$.
\qed
\end{cor}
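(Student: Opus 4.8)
The plan is to derive this immediately from \prettyref{prop:mainprop2} applied with $T = K_2$. First I would observe that the hypothesis $\coind_{\Z_2}(\Hom(K_2,G)) \ge k$ does indeed ensure $\coind_{\Z_2}(\Hom(K_2,G)) \ge 1$ whenever $k \ge 1$, so \prettyref{prop:mainprop2} applies; the case $k = 0$ is trivial since then $\Hom(K_2,G) \ne \emptyset$, so $G$ has an edge and the constant map $T_{0,m} = K_2 \to G$ onto that edge already works (and the statement for $T_{k,m}$ with $k \ge 1$ requires an edge anyway).

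Next, for $k \ge 1$, \prettyref{prop:mainprop2} with $T = K_2$ gives
\[
\lim_{m \to \infty} \coind_{\Z_2}\bigl(\Hom(K_2 \times_{\Z_2} \Cpm, G)\bigr) + 1 \ge \coind_{\Z_2}\bigl(\Hom(K_2,G)\bigr) \ge k,
\]
so for all sufficiently large $m$ we have $\coind_{\Z_2}(\Hom(K_2 \times_{\Z_2} \Cpm, G)) \ge k - 1 \ge 0$. In particular this coindex is nonnegative, which already tells us $\Hom(K_2 \times_{\Z_2} \Cpm, G) \ne \emptyset$, i.e. there is a homomorphism $K_2 \times_{\Z_2} \Cpm = T_{1,m} \to G$. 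To handle general $k$ I would then iterate: treating $T' = K_2 \times_{\Z_2} \Cpm = T_{1,m}$ as the new $\Z_2$-graph (it carries the residual right $\Z_2$-action flipping the $K_2$ factor, as in \prettyref{defn:Tgraphs}), reapply \prettyref{prop:mainprop2} with $T = T_{1,m}$ to pass from coindex $\ge k-1$ of $\Hom(T_{1,m}, G)$ to coindex $\ge k-2$ of $\Hom(T_{1,m} \times_{\Z_2} \Cpm, G) = \Hom(T_{2,m},G)$ for large $m$, and continue. After $k$ steps one reaches $\coind_{\Z_2}(\Hom(T_{k,m},G)) \ge 0$ for sufficiently large $m$, hence $\Hom(T_{k,m},G) \ne \emptyset$, i.e. there is a homomorphism $T_{k,m} \to G$.

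The main point to be careful about is the bookkeeping of the "sufficiently large $m$": each application of \prettyref{prop:mainprop2} only guarantees the conclusion for $m$ past some threshold depending on the previous $T$, and the $\Cpm$ in the definition of $T_{k,m}$ uses a single common parameter $m$, so one must take the maximum of the finitely many thresholds produced over the $k$ iterations. Since $k$ is fixed this is a finite maximum and causes no trouble. The only other thing to check is that the hypothesis "$\coind_{\Z_2}(\Hom(T,G)) \ge 1$" of \prettyref{prop:mainprop2} is met at each stage, which follows since at stage $j$ we have inductively arranged $\coind_{\Z_2}(\Hom(T_{j,m},G)) \ge k - j \ge 1$ as long as $j < k$; the final stage $j = k-1 \to k$ only needs $\ge 1$ as well, and the very last conclusion $\ge 0$ is exactly non-emptiness. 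No real obstacle arises; the corollary is essentially a repackaging of \prettyref{prop:mainprop2} together with the interpretation of nonnegative coindex as non-emptiness of the $\Hom$ complex.
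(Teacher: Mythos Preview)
Your approach—iterating \prettyref{prop:mainprop2}—is exactly what the paper intends (it gives no proof beyond the \qed after the statement).  There is, however, a genuine wrinkle in your bookkeeping paragraph.  When you ``reapply \prettyref{prop:mainprop2} with $T=T_{1,m}$'', the proposition treats $T$ as \emph{fixed} and produces a threshold $N$ so that $\coind_{\Z_2}\Hom(T_{1,m}\times_{\Z_2}\mathcal C^1_{2m'},G)\ge k-2$ for $m'\ge N$.  That threshold $N$ depends on the chosen~$m$, and $T_{1,m}\times_{\Z_2}\mathcal C^1_{2m'}$ is $T_{2,m'}$ only if $m'=m$.  So the ``finitely many thresholds'' you propose to take the maximum of are not constants but functions of the parameter at the previous stage; simply taking a maximum does not close the circle.

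The easy repair (which the paper also leaves implicit) is to note that for $m'\ge m\ge3$ there exist $(\Z_2\times\Z_2)$-equivariant graph homomorphisms $\mathcal C^1_{2m'}\to\mathcal C^1_{2m}$, generalising the map $\Cpm\to\mathcal C^1_6$ written down in the proof of \prettyref{lem:colorTkm}.  Run the iteration allowing different parameters $m_1,\dots,m_k$ at each stage; this yields a homomorphism $K_2\times_{\Z_2}\mathcal C^1_{2m_1}\times_{\Z_2}\cdots\times_{\Z_2}\mathcal C^1_{2m_k}\to G$.  Then for every $m\ge\max_j m_j$ the equivariant maps $\mathcal C^1_{2m}\to\mathcal C^1_{2m_j}$ give $T_{k,m}\to K_2\times_{\Z_2}\mathcal C^1_{2m_1}\times_{\Z_2}\cdots\times_{\Z_2}\mathcal C^1_{2m_k}\to G$, which is the desired conclusion.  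With this one extra ingredient your argument is complete and matches the paper's intended proof.
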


\begin{rem}
We point out that the odd cycles $C_{2r+1}$ are
also Stiefel-Whitney test graphs that satisfy the equivariant coloring condition of \prettyref{lem:colorTkm}.  For the odd cycle $C_{2r+1}$ on vertices $\{1, \dots, 2r+1\}$, recall that the involution was given by $i \mapsto 2-i \pmod{2r+1}$ and hence in our coloring we map $r+1 \mapsto 1$ and $r+2 \mapsto 2$.  Hence one can take either $K_2$ or
$C_{2r+1}$ as the `base' graph and apply iterations of the $\qm
\times_{{\mathbb Z}_2} \Cpm$ construction to obtain new
Stiefel-Whitney test graphs (see \prettyref{fig:C5C3}).
\end{rem}

\begin{figure}[!ht]
\begin{center}
\input{C5C3.pspdftex}
\caption{The graphs $C_5, \Cptx6$, and $C_5 \times_{{\mathbb
Z}_2} \Cptx6$.}\label{fig:C5C3}

\end{center}
\end{figure}

\begin{rem}\label{rem:HomK2Tkm}
As was the case with the spherical graphs, one might ask if it would have been enough to know the
connectivity of $\Hom(K_2, T_{k,m})$ to establish $\chi(T_{k,m})\ge k+2$.  But here this is not so, since if $G$ is a graph with a right ${\mathbb Z}_2$-action
and $m\ge5$ (so that the left action on $\Cpm$ is $5$-discontinuous), \prettyref{thm:secondentry}  and \prettyref{ex:K2Cm} give us
\begin{align*}
\Hom(K_2, G \times_{\Z_2} \Cpm) &\simeq_{\Z_2} \Hom(K_2, G) \times_{\Z_2} \Hom(K_2, \Cpm)\\
&\simeq_{\Z_2} \Hom(K_2, G) \times_{\Z_2} \Hom({\bf 1}, \Cpm)\\
&\simeq_{\Z_2} \Hom(K_2, G) \times_{\Z_2} {\mathbb S}^1_b.
\end{align*}
\noindent
Hence $\Hom(K_2, T_{k,m}) \simeq_{\Z_2} \Hom(K_2,T_{k-1,m}) \times_{\Z_2} {\mathbb S}^1_b$ and by induction we get
\begin{align*}
\Hom(K_2, T_{k,m}) &\simeq \Hom(K_2,K_2) \times_{\Z_2} ({\mathbb S}^1_b \times_{{\mathbb Z}_2} \cdots \times_{{\mathbb Z}_2} {\mathbb S}^1_b)\\
&\approx_{{\mathbb Z}_2}
\underbrace{
{\mathbb S}^1_b \times_{{\mathbb Z}_2} \cdots \times_{{\mathbb Z}_2} {\mathbb S}^1_b
}_{k-\textrm{times}}
\end{align*}
for $k\ge1$ and $m\ge5$.

Hence these spaces have nontrivial fundamental group (are not 1-connected) for all~$k$.  In fact one can show that $\coind \bigl(\Hom(K_2, T_{k,m})\bigr) = 1$ for all~$k$
(the space
${\mathbb S}^1_b \times_{{\mathbb Z}_2} \cdots \times_{{\mathbb Z}_2} {\mathbb S}^1_b$
has the finite-dimensional contractible space $\R^k$ as a covering space and
hence its fundamental group does not contain an element of order~$2$),
so that these are examples of so-called \emph{non-tidy} ${\mathbb Z}_2$-spaces discussed in \cite{M03}.
\end{rem}

\subsection{Discussion}
We briefly discuss how one can view the spherical graphs and the twisted toroidal graphs as generalizations of odd cycles in the context of topological lower bounds on chromatic number.  In \cite{Schspace} the second author establishes a connection between the topology of $\Hom(K_2,\qm)$ and $\Hom (C_{2r+1},\qm)$ complexes.  On the one hand, as an easy consequence of his result we know that if $\Hom(K_2, G)$ admits an equivariant map from a circle (e.g., $\Hom(K_2, G)$ is connected), then there exists a homomorphism from an odd cycle $C_{2r+1} \rightarrow G$ for $r$ sufficiently large (so that $\Hom(C_{2r+1},G)$ is nonempty).  This in turn implies that the chromatic number of $G$ is at least 3.  Babson and Kozlov also showed that odd cycles are test graphs in the sense that higher connectivity of $\Hom(C_{2r+1},G)$ provides further bounds on $\chi(G)$.  These facts can be summarized as follows.
\begin{itemize}
\item
Connectivity of $\Hom(K_2, G)$ implies the existence of a homomorphism $C_{2r+1} \rightarrow G$.  More generally, if $\Hom(K_2, G)$ admits an ${\mathbb Z}_2$-equivariant map from a $k$-sphere, then for sufficiently large $r$, $\Hom(C_{2r+1}, G)$ admits an ${\mathbb Z}_2$-equivariant map from a $(k-1)$-sphere.

\item
Connectivity of $\Hom(C_{2r+1}, G)$ provides the `correct' lower bound on $\chi(G)$:
\[\conn \bigl(\Hom(C_{2r+1}, G)\bigr) \leq \Ht_{{\mathbb Z}_2} \bigl(\Hom(C_{2r+1}, G)\bigr) \leq \chi(G) -3 = \chi(G) - \chi(C_{2r+1}).\]
\end{itemize}

The results of our paper generalize this situation in the following way.  We construct a family of graphs $S_{k,m}$ (the family $T_{k,m}$ works equally well) with properties that resemble `higher dimensional' analogues of those of the odd cycle.  Namely, if $\Hom(K_2, G)$ admits a ${\mathbb Z}_2$-equivariant map from a $k$-sphere (e.g., is $(k-1)$-connected), then there exists a graph homomorphism $S_{k,m} \rightarrow G$ for sufficiently large $m$.  Furthermore, the $S_{k,m}$ have the additional property that they are test graphs in the sense that higher connectivity of $\Hom(S_{k,m}, G)$ provide the best possible bounds on $\chi(G)$.

\begin{itemize}
\item
$k$-connectivity of $\Hom(K_2, G)$ implies the existence of a homomorphism $S_{k,m} \rightarrow G$, and more generally if $\Hom(K_2, G)$ admits an ${\mathbb Z}_2$-equivariant map from a $j$-sphere, then for sufficiently large $m$, $\Hom(S_{k,m}, G)$ admits an ${\mathbb Z}_2$-equivariant map from a $(j-k)$-sphere.

\item
Connectivity of $\Hom(S_{k,m}, G)$ provides the `correct' lower bound on $\chi(G)$:
\[\conn \bigl(\Hom(S_{k,m}, G)\bigr) \leq \Ht_{{\mathbb Z}_2} \bigl(\Hom(S_{k,m}, G)\bigr) \leq \chi(G) - (k + 2) = \chi(G) - \chi(S_{k,m}).\]

\end{itemize}

In this sense a general pattern emerges: the $k$-connectivity of the original Lov\'{a}sz $\Hom(K_2,G)$ complexes implies the existence of homomorphisms from certain `$k$-dimensional' graphs into $G$, which in turn imply stronger lower bounds on the chromatic number of $G$ in terms of their connectivity.  In addition, as with odd cycles, the family of graphs $S_{k,r}$ are Stiefel-Whitney test graphs which `sit below' the original bound obtained by $K_2$ in the sense that
\[\Ht_{{\mathbb Z}_2}\bigl(\Hom(S_{k,r}, G)\bigr) + k \leq \Ht_{{\mathbb Z}_2}\bigl(\Hom(K_2, G)\bigr) \leq \chi(G) - 2.\]

Hence the topological bounds obtained from height of the $\Hom(S_{k,r},\qm)$ (or similarly the $\Hom(T_{k,m},\qm)$) complexes are no better than the original bounds obtained by Lov\'{a}sz.

\subsection{Generalized Mycielski Graphs}\label{sec:mycielski}
As a further application of our general methods, we show that test graphs can be obtained via the
generalized Mycielski-construction introduced in
\cite{orig-gen-myc}.
\begin{defn}[Generalized Mycielski construction]
\label{def:myc}
For $m\ge0$
let $P_m'$ be a path of length $m$ with a loop added at one end,
\begin{align*}
V(P_m')&=\{0,1,\dots,m\},\\
E(P_m')&=\{(n,m): \abs{n-m}=1\} \union \{(0,0)\}.
\intertext{For $m\ge1$ and $G$ a graph we define the graph}
M_m G&\deq (P_m'\times G) / (\set m\times V(G)).
\end{align*}
\end{defn}

\begin{prop}\label{prop:chi-myc-ub}
Let $G$ be a graph and $m\ge1$. Then
$\chi(M_m G)\le\chi(G)+1$.
\end{prop}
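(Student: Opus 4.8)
The plan is to prove the inequality by constructing an explicit graph homomorphism $M_m G\to K_{\chi(G)+1}$, equivalently a proper coloring of $M_m G$ with $\chi(G)+1$ colors. If $G$ has a loop then $\chi(G)=\infty$ and there is nothing to prove, so I may assume $G$ is loopless; put $n=\chi(G)$ and fix a homomorphism $c\colon G\to K_n$, which we view as a map $c\colon V(G)\to\{1,\dots,n\}$ with $c(v)\neq c(w)$ whenever $v\sim w$ in $G$.

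First I would unwind the definition $M_m G=(P_m'\times G)/(\{m\}\times V(G))$. Its vertex set consists of the pairs $(i,v)$ with $0\le i\le m-1$ and $v\in V(G)$, together with one apex vertex $z$, the image of $\{m\}\times V(G)$ under the quotient map. From the definition of the categorical product I would read off the adjacencies: any edge of $M_m G$ between two non-apex vertices $(i,v)$ and $(j,w)$ requires in particular $v\sim w$ in $G$; any edge incident to $z$ joins it to a vertex of the form $(m-1,w)$, because in $P_m'$ the vertex $m$ is adjacent only to $m-1$; and $z$ carries no loop, since $P_m'$ has no loop at $m$, so that $M_m G$ is loopless. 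The loop of $P_m'$ at $0$ only enlarges the set of edges inside level $0$, but each such edge still requires $v\sim w$ in $G$ and is therefore handled uniformly below.

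Next I would define $f\colon V(M_m G)\to\{1,\dots,n+1\}$ by $f(i,v)=c(v)$ for $0\le i\le m-1$ and $f(z)=n+1$, and then check that $f$ is a graph homomorphism to $K_{n+1}$. For an edge between non-apex vertices $(i,v)$ and $(j,w)$ one has $v\sim w$ in $G$, hence $f(i,v)=c(v)\neq c(w)=f(j,w)$; for an edge from $z$ to some $(m-1,w)$ one has $f(z)=n+1\neq c(w)=f(m-1,w)$ because $c(w)\le n$; and $z$ has no loop. This yields $\chi(M_m G)\le n+1=\chi(G)+1$.

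I do not expect a genuine obstacle here: the only thing demanding care is reading off the edge set of the quotient graph correctly, in particular that the collapsed level $\{m\}\times V(G)$ becomes a single loopless vertex. The real content of the generalized Mycielski construction is the matching lower bound, i.e.\ the assertion that the $M_m G$ are test graphs of the expected chromatic number; that is proved separately by producing equivariant maps into the spaces $|\Hom(\qm,\qm)|$ via the ${\mathbb S}^k_b\times_{{\mathbb Z}_2}(-)$ technique described earlier in the section, not by a direct coloring.
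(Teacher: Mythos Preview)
Your proof is correct and follows essentially the same idea as the paper. The paper phrases it more tersely by observing that deleting the apex vertex leaves a graph isomorphic to $P'_{m-1}\times G$, which projects onto $G$; your explicit coloring $f(i,v)=c(v)$, $f(z)=n+1$ is precisely that projection composed with a chosen coloring of $G$, followed by assigning a fresh color to the apex.
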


\begin{proof}
The graph obtained from $M_m G$ by removing the vertex
$\set m\times V(G)$ is isomorphic to $P'_{m-1}\times G$ and hence admits
a graph homomorphism to~$G$.
\end{proof}

In \cite{csorba-myc} Csorba showed that
$\Hom(K_2, M_m G)$ is homotopy equivalent to $\Sigma \Hom(K_2, G)$, the suspension of
$\Hom(K_2, G)$ (in fact Csorba worked with the neighborhood complex of $G$, but this is known to be homotopy equivalent to $\Hom(K_2,G)$).  It follows that $\Hom(K_2, M_m^k
K_2)\homot\susp^k\Hom(K_2, K_2)\homeo\susp^k\Sphere^0\homeo\Sphere^k$
and therefore $\chi(M_m^k K_2)=k+2$.  Hence we have examples of graphs with an arbitrarily large gap between the chromatic number and the odd girth, since the odd girth of $M_m^k K_2$ equals $2m+1$, independent
of~$k$.

In \cite{gen-myc} the authors mention the desirability of a $\Z_2$-version of this
result, and in \cite{gg-ky-fan} it was shown that
$\Hom(K_2, M_m G)\homot_{\Z_2}\susp\Hom(K_2, G)$.  To show that the
construction in fact yields test graphs, we will also have to understand the actions on $\Hom(K_2, M_m G)$ induced by actions
on~$G$.  On the other hand, to obtain our result we only need an equivariant map to
$\Hom(K_2, M_m G)$, and do not need to establish that it is a homotopy
equivalence.  For our purposes, the following is sufficient.

\begin{prop}\label{prop:susp-myc}
Let $G$ be a graph with at least one edge
and  $m\ge1$. There is an equivariant map
$\Sigma \real{\Hom(K_2, G)}\to_{\Z_2}\real{\Hom(K_2, M_m G)}$
which is natural with respect to automorphisms of~$G$.
\end{prop}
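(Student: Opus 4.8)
The plan is to construct the desired map combinatorially at the level of graphs (or posets), so that applying the functor $\Hom(K_2,\qm)$ produces, after geometric realization, a map out of the suspension. The key observation is that the suspension $\Sigma\real{\Hom(K_2,G)}$ can be modeled as $\real{\Hom(K_2, C_3' \times G)}$ or more precisely via a ${\mathbb Z}_2$-twisted product with a looped interval: recall that the looped path $P_1'$ (an edge with a loop on one vertex) has the property that $K_2 \times P_1'$ or rather a suitable pushout of $K_2$-products gives suspension. Concretely, I would first establish a ${\mathbb Z}_2$-equivariant homeomorphism (or homotopy equivalence) between $\Sigma\real{\Hom(K_2,G)}$ and the realization of a $\Hom$ complex built from a small auxiliary graph, using the fact that $\Hom(K_2,\qm)$ turns categorical products into topological products and pushouts (along inclusions of looped subgraphs) into pushouts up to homotopy; this is the mechanism by which Csorba's and the $\gg$-$\ky$-Fan result operate. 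The ${\mathbb Z}_2$-action here is the standard free action on $\Hom(K_2,\qm)$ coming from the edge-flip of $K_2$, and the suspension is unreduced, so the two cone points are exchanged — this matches the ${\mathbb Z}_2$-action on $\susp\Sphere^0$ appearing implicitly in \prettyref{sec:mycielski}.

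Next, I would identify a graph homomorphism $K_2 \times H \to M_m G$ (for the appropriate small graph $H$ with a compatible right ${\mathbb Z}_2$-action, e.g. $H = C_{2m+1}'$ or a looped odd path configuration reflecting the combinatorics of $P_m'$) which realizes the suspension coordinate. The generalized Mycielski graph $M_m G = (P_m' \times G)/(\{m\}\times V(G))$ already contains $P_{m-1}' \times G$ as the complement of the apex vertex, and collapsing the top copy of $G$ to a point is precisely what produces a suspension-like coordinate after applying $\Hom(K_2,\qm)$: the apex vertex, being adjacent to everything in the $(m-1)$-level, plays the role of a cone point. So I would exhibit explicitly the homomorphism $P_m'$ (or $K_2 \times$ something) into $M_m G$ that, composed appropriately and fed through $\Hom(K_2,\qm)$, yields the map $\Sigma\real{\Hom(K_2,G)}\to_{{\mathbb Z}_2}\real{\Hom(K_2,M_mG)}$. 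Naturality with respect to $\Aut(G)$ will be immediate from the construction being functorial in $G$: every map used is induced by a natural transformation of functors in the $G$-variable, so no separate naturality verification is needed beyond checking that $M_m(\qm)$ and $\Hom(K_2,\qm)$ are functors (which they are).

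The main obstacle I expect is getting the ${\mathbb Z}_2$-equivariance exactly right, i.e. matching the antipodal-type action on the suspension $\Sigma\real{\Hom(K_2,G)}$ (which must swap the two cone points while acting on $\real{\Hom(K_2,G)}$ in the middle) with the edge-flip action on $\real{\Hom(K_2,M_mG)}$. The looped path $P_m'$ has no nontrivial automorphism, so the ${\mathbb Z}_2$-action on $\Hom(K_2,M_mG)$ comes entirely from the $K_2$ in the first coordinate; one must check that under the map being built this pulls back to the correct action on the suspension. I would handle this by working with an explicit simplicial or poset-level model: realize $\Sigma X$ as $\real{\{\bot\} * X * \{\top\}}$-type construction or as a mapping-cylinder double, track where the two tips go, and verify the swap. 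A secondary technical point is that we only need an equivariant \emph{map}, not an equivalence, which gives us latitude — in particular we may compose with any equivariant map and need not invert anything — so I would exploit this by routing through whatever intermediate $\Hom$ complex makes the equivariance transparent (for instance $\Hom(K_2, K_2 \times_{{\mathbb Z}_2}(\text{looped interval})^1)$-type objects as in \prettyref{thm:firstentry}), rather than insisting on the cleanest possible statement. Once the equivariant map is in hand, naturality follows formally and the proof concludes.
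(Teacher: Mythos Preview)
Your proposal is too schematic to count as a proof, and several of the concrete suggestions you do make point in the wrong direction.  The central missing idea is this: the suspension coordinate in the paper's argument comes from the simple observation that $\real{\Hom(K_2,P'_m)}$ is an interval, and that the $\Z_2$-action induced by flipping the edge of~$K_2$ reverses that interval.  One then has the chain of maps
\[[-m,m]\times\real{\Hom(K_2,G)}\xrightarrow{\ \cong\ }\real{\Hom(K_2,P'_m)}\times\real{\Hom(K_2,G)}\to\real{\Hom(K_2,P'_m\times G)}\to\real{\Hom(K_2,M_mG)},\]
the last map induced by the quotient $P'_m\times G\to M_mG$.  The two cone points of the suspension both come from the single apex vertex $\ast\in M_mG$, but approached from the two vertices of~$K_2$: at one end of the interval the image lands in multihomomorphisms of the form $(\{(m-1,a)\colon a\in A\},\ast)$, at the other end in $(\ast,\{(m-1,a)\colon a\in A\})$, and each of these families is bounded above by a single element of $\Hom(K_2,M_mG)$, so the map extends over the cones.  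You never identify this mechanism; you speak of the apex as giving ``a cone point'' (singular) and do not explain where the second one comes from or why the $K_2$-flip swaps them.

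Your specific proposals are also off target.  A graph homomorphism $K_2\times H\to M_mG$ for some auxiliary $\Z_2$-graph $H$ is a point of $\Hom(K_2\times H,M_mG)$, not a map between $\Hom$-complexes in the direction you need; and routing through \prettyref{thm:firstentry} with $K_2\times_{\Z_2}(\text{looped interval})^1$ produces spaces of equivariant poset maps into $\Hom(K_2,M_mG)$, again not what is wanted.  The pushout mechanism you allude to (from Csorba's argument) could in principle be made to work, but you would need to exhibit $M_mG$ as an explicit pushout of graphs along inclusions and check that $\Hom(K_2,\qm)$ sends it to a homotopy pushout---none of which you do.  The paper's route via the interval $\real{\Hom(K_2,P'_m)}$ is both shorter and makes the $\Z_2$-equivariance and the naturality in~$G$ transparent.
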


\begin{proof}
For a $\Z_2$-space $X$ we identify its suspension~$\susp X$ with the space
\[\left([-(m+1),m+1]\times X\right)
\big/
\left(\set{-(m+1)}\times X,\set{m+1}\times X\right),
\]
a $\Z_2$-space with action $\tau[(r,x)]=[(-r,\tau x)]$.
It is easily seen that $\real{\Hom(K_2, P'_m)}$ is homeomorphic to an interval.
Indeed,
there is a homeomorphism
\[h\colon[-m,m]\to_{\Z_2}\real{\Hom(K_2,P'_m)}\]
satisfying, for integers~$n$ with $-m\le n\le m$,
\[h(n)=
\begin{cases}
(0,0),&n=0,\\
(2s+1,2s),&n=2s+1>0,\\
(2s-1,2s),&n=2s>0,\\
(2s,2s+1),&n=-(2s+1)<0,\\
(2s,2s-1),&n=-2s<0,
\end{cases}
\]
where we write graph homomorphisms $K_2\to G$ as pairs of vertices of~$G$.
This yields a map
\begin{align*}
f\colon
[-m,m]\times\real{\Hom(K_2, G)}
&\to
\real{\Hom(K_2,P'_m)}\times\real{\Hom(K_2,G)}
\to\real{\Hom(K_2,P'_m\times G)}
\\&\to\real{\Hom(K_2,M_m G)}.
\end{align*}
The second map in this composition is induced by
$(\alpha,\beta)\mapsto(v\mapsto\alpha(v)\times\beta(v))$, compare
the beginning of the proof of
\prettyref{lem:bundle0'}.

We let $(A,B)$ denote an element of $\Hom(K_2,G)$, where $A,B \subset V(G)$.  The restriction $f\bigl((-1)^mm,\qm\bigr)$ maps $(A,B)\in\Hom(K_2,G)$ to
\[\bigl(\set{(m-1,a)\colon a\in A},\ast\bigr)
\in\Hom(K_2,M_m G),\]
where we identify a vertex $(n,v)\in V(P_m'\times G)$ with its unique image in
$M_m G$ if $n<m$ and denote the vertex of $M_m G$ which is the equivalence
class $\set m\times V(G)$ by~$\ast$.
Since each of these elements of $\Hom(K_2, M_m G)$ is less than or equal to
\[
\bigl(\set{(m-1,a)\colon \text{there is $u$ s.t.~$(a,u)\in E(G)$}}
,\ast\bigr)\in\Hom(K_2,M_m G),\]
the map $f\bigl((-1)^mm,\qm \bigr)$ extends from $\Hom(K_2, G)$ to a cone over
$\Hom(K_2, G)$.  Consequently, the map~$f$ extends to a $\Z_2$-map
$\susp \real{\Hom(K_2,G)}\to\real{\Hom(K_2,M_mG)}$.  This construction is
natural with respect to automorphisms of~$G$.
\end{proof}

\begin{cor}
Let $G$ be a graph with at least one edge.
Then \[\coind \Hom(K_2, M_m G)\ge\coind\Hom(K_2, G)+1
\]\
for all  $m\ge1$. \qed
\end{cor}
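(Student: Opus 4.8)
The plan is to read the corollary off from \prettyref{prop:susp-myc} directly, using only the elementary fact that the $\Z_2$-coindex increases by one under the suspension model that appears in that proof. Since $G$ has at least one edge, $\Hom(K_2,G)$ is a nonempty free $\Z_2$-space, so $n\deq\coind\Hom(K_2,G)$ is a nonnegative integer and there is a $\Z_2$-map $g\colon\Sphere^n\to_{\Z_2}\real{\Hom(K_2,G)}$, where $\Sphere^n$ carries the antipodal action.

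First I would observe that the suspension model used in the proof of \prettyref{prop:susp-myc}, namely $\susp X=\bigl([-(m+1),m+1]\times X\bigr)\big/\bigl(\set{-(m+1)}\times X,\set{m+1}\times X\bigr)$ with action $\tau[(r,x)]=[(-r,\tau x)]$, is equivariantly homeomorphic to the join $\Sphere^0\ast X$ on which $\Z_2$ acts diagonally, by the swap of the two points of $\Sphere^0$ (these being the images of the two collapsed ends, which $r\mapsto-r$ interchanges) and by $\tau$ on $X$. Specialising to $X=\Sphere^n$ with the antipodal action gives $\susp\Sphere^n\homeo_{\Z_2}\Sphere^0\ast\Sphere^n=\Sphere^{n+1}$ with the antipodal action, since negating the $\Sphere^0$-coordinate together with negating the $\Sphere^n$-coordinates is exactly the antipodal map on $\Sphere^{n+1}$. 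Hence $\susp g$ is a $\Z_2$-map $\Sphere^{n+1}\to_{\Z_2}\susp\real{\Hom(K_2,G)}$.

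Then I would compose $\susp g$ with the equivariant map $\susp\real{\Hom(K_2,G)}\to_{\Z_2}\real{\Hom(K_2,M_m G)}$ supplied by \prettyref{prop:susp-myc} to obtain a $\Z_2$-map $\Sphere^{n+1}\to_{\Z_2}\real{\Hom(K_2,M_m G)}$, which yields $\coind\Hom(K_2,M_m G)\ge n+1=\coind\Hom(K_2,G)+1$. There is no genuine obstacle here; the only point that needs a line of justification is the equivariant identification $\susp\Sphere^n\homeo_{\Z_2}\Sphere^{n+1}$ recorded above, and the rest is formal.
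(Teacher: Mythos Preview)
Your proposal is correct and is exactly the argument the paper has in mind: the paper marks this corollary with a bare \qed, treating it as immediate from \prettyref{prop:susp-myc}, and your unpacking---suspend a coindex-witnessing map $\Sphere^n\to_{\Z_2}\real{\Hom(K_2,G)}$, identify $\susp\Sphere^n\homeo_{\Z_2}\Sphere^{n+1}$ under the action used there, and compose with the equivariant map of the proposition---is the intended reasoning.
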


\begin{cor}\label{cor:Skb-to-myc}
Let $m\ge1$ and $k\ge0$.
There is a $(\Z_2\times\Z_2)$-map
$\Sphere^k_b\to\real{\Hom(K_2, M_m^k K_2)}$.
\end{cor}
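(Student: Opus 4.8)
The plan is to induct on~$k$, using \prettyref{prop:susp-myc} for the inductive step.  First I would fix notation.  The construction $M_m$ is functorial (a homomorphism $f\colon G\to G'$ induces $\mathrm{id}_{P_m'}\times f$, which descends to $M_mG\to M_mG'$), so the nonidentity automorphism~$\sigma$ of~$K_2$ determines by iteration an involution $\sigma_k\deq M_m^k\sigma$ of $M_m^kK_2$, with $\sigma_k=M_m\sigma_{k-1}$.  The $(\Z_2\times\Z_2)$-action on $\real{\Hom(K_2,M_m^kK_2)}$ understood in the statement is the one whose antipodal factor is the involution induced by the nonidentity automorphism of the domain~$K_2$ and whose reflection factor is $\Hom(K_2,\sigma_k)$.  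An easy induction shows $M_m^kK_2$ has at least one edge for every~$k$ (if $G$ has an edge $(a,b)$ then $(0,a)\sim(0,b)$ in $M_mG$, since $0$ carries a loop in~$P_m'$), so \prettyref{prop:susp-myc} applies with $G=M_m^{k-1}K_2$.

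For $k=0$ one has $\real{\Hom(K_2,K_2)}\homeo\Sphere^0$, the two points being the two homomorphisms $K_2\to K_2$; both the domain involution and $\Hom(K_2,\sigma_0)$ interchange these two points, and since both factors of $\Z_2\times\Z_2$ also act on $\Sphere^0_b$ by interchanging its two points, any bijection is the required map.

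For the inductive step I would first record the (standard) equivariant identification $\Sphere^k_b\homeo_{\Z_2\times\Z_2}\susp\Sphere^{k-1}_b$, where $\susp$ denotes the suspension as used in \prettyref{prop:susp-myc} and the suspension parameter is negated by the antipodal involution and fixed by the reflection involution: in coordinates, the suspension parameter is the last coordinate of~$\Sphere^k$, the antipodal map factors as ``negate it'' followed by the antipodal map of~$\Sphere^{k-1}$, and the reflection is chosen to fix it.  Applying~$\susp$ to the map $g_{k-1}\colon\Sphere^{k-1}_b\to\real{\Hom(K_2,M_m^{k-1}K_2)}$ given by the inductive hypothesis yields a map $\Sphere^k_b\to\susp\real{\Hom(K_2,M_m^{k-1}K_2)}$ that is equivariant for the first $\Z_2$ (on both sides this involution negates the suspension parameter and applies the corresponding antipodal/domain involution) and for the second $\Z_2$ (which acts only on the non-suspension factor, on which $g_{k-1}$ is equivariant).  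Composing with the map $\susp\real{\Hom(K_2,M_m^{k-1}K_2)}\to_{\Z_2}\real{\Hom(K_2,M_m^kK_2)}$ of \prettyref{prop:susp-myc} (after the harmless rescaling of the suspension interval used there) completes the step: that proposition supplies equivariance for the first $\Z_2$ directly, while its naturality with respect to the automorphism $\sigma_{k-1}$ of $M_m^{k-1}K_2$ supplies equivariance for the second, since $\Hom(K_2,M_m\sigma_{k-1})=\Hom(K_2,\sigma_k)$.  The composite $g_k$ is the desired $(\Z_2\times\Z_2)$-map.

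The only real work is the uniform bookkeeping of the two $\Z_2$-actions across the suspension: one must verify that the antipodal involution on $\Sphere^k_b$ is precisely the suspension (in the sense of \prettyref{prop:susp-myc}) of the antipodal involution on $\Sphere^{k-1}_b$, while the reflection involution is exactly the one transported by the naturality clause of that proposition.  I expect this matching, rather than any homotopy-theoretic subtlety, to be the main obstacle; once the equivariant homeomorphism $\Sphere^k_b\homeo_{\Z_2\times\Z_2}\susp\Sphere^{k-1}_b$ is pinned down, the rest is formal.
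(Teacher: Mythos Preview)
Your proposal is correct and follows essentially the same route as the paper: induction on~$k$ with base case $\Hom(K_2,K_2)\homeo_{\Z_2\times\Z_2}\Sphere^0_b$, and inductive step via the identification $\Sphere^k_b\homeo_{\Z_2\times\Z_2}\susp\Sphere^{k-1}_b$ (antipodal factor negating the suspension coordinate, reflection factor fixing it) together with \prettyref{prop:susp-myc}, using its naturality in automorphisms of~$G$ for the second~$\Z_2$. Your write-up is more explicit about the bookkeeping of the two actions than the paper's, but the argument is the same.
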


\begin{proof}
We note that $\Sphere^{k+1}_b\homeo_{\Z_2\times\Z_2} \susp \Sphere^k_b$,
where the left action on $\Sphere^k_b$ is extended to $\susp \Sphere^k_b$ by
also changing the sign of the suspension parameter as in
\prettyref{prop:susp-myc}, while the right action is extended by
keeping the suspension parameter fixed, i.e.\ using the functoriality
of the suspension.  The result now follows from
\prettyref{prop:susp-myc} by induction, since
$\Hom(K_2,K_2)\homeo_{\Z_2\times\Z_2}\Sphere^0_b$.
\end{proof}
This corollary is sufficient to apply \cite[Thm.~4.6]{Schspace}
together with \prettyref{prop:chi-myc-ub} to obtain that the graphs
$M^k_m K_2$ are test graphs.  However, we repeat the argument here for the reader's convenience.
\begin{thm}\label{thm:myc-tg}
For $m\ge 1$ and $k\ge 0$, the graph $M^k_m K_2$ is a
Stiefel-Whitney test graph with $\chi(M^k_m K_2)=k+2$
\end{thm}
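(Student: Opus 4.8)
The plan is to combine the two ingredients that have just been assembled, exactly in parallel with the treatment of the spherical and twisted toroidal graphs. First I would establish the upper bound $\chi(M^k_m K_2)\le k+2$, which is immediate from \prettyref{prop:chi-myc-ub} by induction on~$k$, using $\chi(K_2)=2$ as the base case: each application of $M_m$ raises the chromatic number by at most one, so $\chi(M^k_m K_2)\le 2+k$. The substance of the theorem is therefore the reverse inequality together with the Stiefel-Whitney test graph property, i.e.\ that for every graph~$G$ with $\Hom(M^k_m K_2,G)\ne\emptyset$ one has $\htt\Hom(M^k_m K_2,G)+(k+2)\le\chi(G)$.

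For this I would invoke the method highlighted in \prettyref{sec:testgraphs} right after \prettyref{prop:main}: it suffices to produce, for each graph~$G$, a $\Z_2$-map
\[
\Sphere^k_b\times_{\Z_2}\real{\Hom(M^k_m K_2,G)}\to_{\Z_2}\real{\Hom(K_2,G)},
\]
since then \prettyref{prop:Sk} gives $\htt\Hom(M^k_m K_2,G)+k\le\htt\Hom(K_2,G)$, and the latter is at most $\chi(G)-2$ because $K_2$ is a Stiefel-Whitney test graph (Theorem~\ref{thm:Lovtheorem}). To build such a map I would use naturality of the construction in \prettyref{prop:susp-myc}: applying $\Hom(K_2,\qm)$ to any homomorphism $M^k_m K_2\to G$ and iterating \prettyref{prop:susp-myc} $k$~times gives a $(\Z_2\times\Z_2)$-map $\susp^k\real{\Hom(K_2,K_2)}\to\real{\Hom(K_2,M^k_m K_2)}\to\real{\Hom(K_2,G)}$; since $\Hom(K_2,K_2)\homeo_{\Z_2\times\Z_2}\Sphere^0_b$ and $\susp^k\Sphere^0_b\homeo_{\Z_2\times\Z_2}\Sphere^k_b$ (as in \prettyref{cor:Skb-to-myc}), this is exactly a $(\Z_2\times\Z_2)$-map $\Sphere^k_b\to\real{\Hom(K_2,G)}$. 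The two $\Z_2$-factors here are the antipodal action on $\Sphere^k_b$ (the ``bundle'' direction) and the reflection action (which matches the $\Z_2$-action on $\Hom(K_2,\qm)$ coming from $K_2$); feeding this into the adjunction/quotient bookkeeping of \prettyref{prop:Sk} produces the displayed map above when $\Hom(M^k_m K_2,G)\ne\emptyset$, and hence the height bound.

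Having the height bound for all~$G$, I would then extract $\chi(M^k_m K_2)\ge k+2$ by the same self-referential trick as in the proof of \prettyref{prop:main}: take $G=M^k_m K_2$, so $\Hom(M^k_m K_2,G)\ne\emptyset$ (the identity), giving $0+(k+2)\le\htt\Hom(M^k_m K_2,M^k_m K_2)+(k+2)\le\chi(M^k_m K_2)$. Combined with the upper bound this yields $\chi(M^k_m K_2)=k+2$, and the displayed inequality $\htt\Hom(M^k_m K_2,G)\le\chi(G)-(k+2)=\chi(G)-\chi(M^k_m K_2)$ is precisely the Stiefel-Whitney test graph condition \prettyref{eq:testgraph}. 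The one point requiring care — and the main technical obstacle — is the equivariance of the suspension map in \prettyref{prop:susp-myc} under the \emph{whole} relevant symmetry: one must check that iterating it is compatible with both the automorphism-induced $\Z_2$ on the left $K_2$ and the suspension-parameter $\Z_2$, so that after $k$ iterations one genuinely lands on $\Sphere^k_b$ with the product $(\Z_2\times\Z_2)$-action rather than some twisted variant; this is exactly the content of \prettyref{cor:Skb-to-myc}, so the step is already in place, and the remainder of the argument is the formal assembly described above.
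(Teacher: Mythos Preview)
Your proposal is correct and follows essentially the same route as the paper: upper bound on $\chi$ from \prettyref{prop:chi-myc-ub}, the $(\Z_2\times\Z_2)$-map $\Sphere^k_b\to\real{\Hom(K_2,M^k_m K_2)}$ from \prettyref{cor:Skb-to-myc}, then \prettyref{prop:Sk} and \prettyref{thm:Lovtheorem}, with the lower bound on $\chi$ extracted by specializing~$G$. The one place where the paper is crisper is the passage from this $(\Z_2\times\Z_2)$-map to the required $\Z_2$-map $\Sphere^k_b\times_{\Z_2}\real{\Hom(M^k_m K_2,G)}\to_{\Z_2}\real{\Hom(K_2,G)}$: rather than fixing a single homomorphism $M^k_m K_2\to G$ and appealing to unspecified ``adjunction/quotient bookkeeping,'' the paper simply uses the composition map $\Hom(K_2,M^k_m K_2)\times\Hom(M^k_m K_2,G)\to\Hom(K_2,G)$ of multihomomorphisms, which is equivariant for both actions and descends to the $\times_{\Z_2}$-quotient.
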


\begin{proof}
The inequality $\chi(M^k_m K_2)\le k+2$ follows by induction from
\prettyref{prop:chi-myc-ub}.  For any graph~$G$, we have maps
\begin{align*}
\Sphere^k_b\times_{\Z_2}\real{\Hom(M^k_m K_2, G)}
&\to_{\Z_2}\real{\Hom(K_2,M^k_m K_2)}\times_{\Z_2}\real{\Hom(M^k_m K_2, G)}
\\
&\to_{\Z_2}\real{\Hom(K_2, G)}.
\end{align*}
The first map we obtain from \prettyref{cor:Skb-to-myc}, the second by composition
of multihomomorphisms. So whenever $\Hom(M^k_m K_2, G)\ne\emptyset$, from \prettyref{prop:Sk} we have
\begin{align*}
\Ht_{\Z_2} \bigl(\Hom(M^k_m K_2, G)\bigr) +k+2
&\le \Ht_{\Z_2}\bigl(\Sphere^k_b\times_{\Z_2}\real{\Hom(M^k_m K_2, G)}\bigr)+2 \\
&\le \Ht_{\Z_2} \bigl(\Hom(K_2, G)\bigr) +2
\le\chi(G).
\end{align*}
Setting $G=K_{\chi(M^k_m K_2)}$, we obtain
$k+2\le\chi(M^k_m K_2)$.
\end{proof}

\section{Further applications}\label{sec:furtherapps}
In this section we discuss other applications of the structural results and constructions from above.  The spherical graphs lead us to a notion of generalized \emph{homomorphism duality}, while the twisted toroidal graphs have an application to a conjecture of Lov\'{a}sz.  We use \prettyref{thm:firstentry} to give graph-theoretical interpretations of
some topological invariants (${\mathbb Z}_2$-index and coindex) of
$\Hom$ complexes.  Finally, we use \prettyref{thm:secondentry} to obtain
the result regarding $S_n$-universality for $\Hom$ complexes mentioned
above.

\subsection{Homomorphism duality}\label{sec:duality}

Duality in homomorphisms of graphs and other relational structures has been extensively studied in the work of Ne\v{s}et\v{r}il and his coauthors (see for instance \cite{NT00}).  The basic idea is to identify a family ${\mathcal F}$ of obstructions to the existence of a homomorphism into a given graph $G$.  For us the exemplary example is the collection of all odd cycles ${\mathcal C}_{odd}$, a family which provides obstructions to homomorphisms to the edge $K_2$ (2-colorings).  For a collection ${\mathcal C}$ of graphs we let $\hom({\mathcal C},H)$ denote the set of graph homomorphisms $\{f:G \rightarrow H, G \in {\mathcal C}\}$.  This duality can then be expressed as:
\[\hom(\mathcal{C}_{odd}, G) \neq \emptyset \Leftrightarrow \hom(G, K_2) = \emptyset.\]

The primary focus in \cite{NT00} is the study of \textit{finite dualities} (where the family ${\mathcal F}$ is required to be a finite set).  Although an infinite set, ${\mathcal C}_{odd}$ represents a particularly nice family in that there exist homomorphisms $C_{2r+3} \rightarrow C_{2r+1}$ which form a \textit{linear} direct system:
\[\cdots \rightarrow C_{2r+3} \rightarrow C_{2r+1} \rightarrow \cdots \rightarrow C_5 \rightarrow C_3.\]

A naive hope would be to search for a similar direct system ${\mathcal F}_k$ (for $k > 1$) which provided obstructions to homomorphisms into larger complete graphs:
\[\hom({\mathcal F}_k, G) \neq \emptyset \Leftrightarrow \hom(G, K_{k+1}) = \emptyset.\]
\noindent Although we do not see any obvious reason why such a family could not exist, it does seem like a lot to hope for as it would for instance imply the long-standing conjecture of Hedetniemi which states that the chromatic number of the (categorical) product $G \times H$ is equal to the minimum of the chromatic numbers of $G$ and $H$.

Here we modify the `classical' duality picture and consider the \emph{space} $\Hom(K_2, G)$ of homomorphisms.  This leads us to a connection between homomorphism duality and the (equivariant) topology of $\Hom$ complexes.  We have seen that the topological complexity of $\Hom(K_2,G)$ provides a lower bound on chromatic number (and hence an obstruction to homomorphisms to complete graphs).  We then search for a family ${\mathcal S}_k$ of graphs which have the property that
\[\hom({\mathcal S}_k, G) \neq \emptyset \Leftrightarrow \Hom(K_2,G) \textrm{ has `complexity $k$'}.\]

Our construction of the spherical graphs $S_{k,m}$ in \prettyref{sec:spherical} (see \prettyref{def:Snk}) provides us with a candidate for such a family.  Recall that these graphs were defined as $S_{k,m} \deq K_2 \times{{\mathbb Z}_2} \bigl(F(X^k_m)\bigr)^1$, where $X^k_m$ denotes the $m$th barycentric subdivision of the boundary of the regular $k+1$-dimensional cross polytope, and $F(X^k_m)$ its face poset.  We have a map of posets $F(X^k_{m+1}) \rightarrow F(X^k_{m})$ which, for each $k$, gives a direct system of graphs
\[\dots \rightarrow S_{k,m+1} \rightarrow S_{k,m} \rightarrow \dots \rightarrow S_{k,0}.\]
\noindent
We use ${\mathcal S}_k \deq \bigl(S_{k,m}\bigr)_{m \geq 0}$ to denote this direct system of graphs.  Our result regarding generalized duality is then the following.

\begin{prop} \label{first}
For any graph $G$ we have
\[\hom({\mathcal S}_k, G) \neq \emptyset \Leftrightarrow \coind_{{\mathbb Z}_2} \bigl(\Hom(K_2,G)\bigr) \geq k.\]
\end{prop}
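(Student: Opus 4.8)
The plan is to obtain this as an essentially immediate consequence of \prettyref{prop:coind}, applied with $T = K_2$ (equipped with its edge-flipping involution) and with the specific triangulated $k$-spheres $X^k_m$ of \prettyref{def:Snk}, namely the $m$th barycentric subdivisions of the boundary of the regular $(k+1)$-dimensional cross polytope. These subdivisions are symmetric with respect to the antipodal action and the maximal diameter of a simplex of $X^k_m$ tends to $0$ as $m \to \infty$, so they are an admissible choice in \prettyref{prop:coind}; moreover, writing $G^k_m = \bigl(F(X^k_m)\bigr)^1$, we have by \prettyref{def:Snk} that $S_{k,m} = K_2 \times_{{\mathbb Z}_2} G^k_m$, so the members of the direct system ${\mathcal S}_k$ from \prettyref{sec:duality} are precisely the graphs $T \times_{{\mathbb Z}_2} G^k_m$ to which \prettyref{prop:coind} refers.

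With this identification in place, the first step is to unwind the hypothesis: by definition of $\hom$ on a direct system of graphs, $\hom({\mathcal S}_k, G) \neq \emptyset$ says exactly that there exists $m \geq 0$ and a graph homomorphism $S_{k,m} = K_2 \times_{{\mathbb Z}_2} G^k_m \to G$. The forward implication is then the first half of the proof of \prettyref{prop:coind}: such a homomorphism gives, via \prettyref{thm:firstentry}, a nonempty poset $\Poset_{{\mathbb Z}_2}\bigl(F(X^k_m), \Hom(K_2,G)\bigr)$, hence an equivariant map ${\mathbb S}^k \cong |F(X^k_m)| \to |\Hom(K_2,G)|$, which means $\coind_{{\mathbb Z}_2}\bigl(\Hom(K_2,G)\bigr) \geq k$. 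The reverse implication is the second half of that proof: if $\coind_{{\mathbb Z}_2}\bigl(\Hom(K_2,G)\bigr) \geq k$, an equivariant map ${\mathbb S}^k \to |\Hom(K_2,G)|$ is approximated, by equivariant simplicial approximation (valid once the simplices of $X^k_m$ are small enough, hence for almost all $m$), by an equivariant simplicial map from $X^k_m$ to the barycentric subdivision of $\Hom(K_2,G)$; sending each simplex to the maximum of the images of its vertices yields an equivariant poset map $F(X^k_m) \to \Hom(K_2,G)$, so $\Poset_{{\mathbb Z}_2}\bigl(F(X^k_m),\Hom(K_2,G)\bigr) \neq \emptyset$ and therefore, again by \prettyref{thm:firstentry}, $\Hom(S_{k,m},G) \neq \emptyset$ for all sufficiently large $m$; in particular $\hom({\mathcal S}_k, G) \neq \emptyset$.

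Since \prettyref{prop:coind} is already available, there is no genuine obstacle here — the entire content is contained in that proposition, and the only point needing care is the routine identification of ${\mathcal S}_k$ with the family $\bigl(K_2 \times_{{\mathbb Z}_2} G^k_m\bigr)_m$, which is immediate from \prettyref{def:Snk}. In fact one could phrase the whole argument as a one-line appeal to the equality $\coind_{{\mathbb Z}_2}\Hom(K_2,G) = \max\{k : \text{there is } m \text{ with } K_2 \times_{{\mathbb Z}_2} G^k_m \to G\}$ of \prettyref{prop:coind}. The upshot, making precise the ``complexity $k$'' heuristic of \prettyref{sec:duality}, is that the relevant notion of ``$\Hom(K_2,G)$ has complexity $k$'' is exactly the inequality $\coind_{{\mathbb Z}_2}\bigl(\Hom(K_2,G)\bigr) \geq k$.
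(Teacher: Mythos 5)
Your proof is correct and takes the same approach as the paper: both reduce the statement to \prettyref{prop:coind} applied with $T = K_2$ and the specific spheres $X^k_m$, after identifying ${\mathcal S}_k$ with the family $\bigl(K_2 \times_{{\mathbb Z}_2} G^k_m\bigr)_m$. Your write-up is somewhat more explicit in re-traversing the two halves of the proof of \prettyref{prop:coind}, but the underlying argument is the same as the paper's short proof.
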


\begin{proof}
We apply \prettyref{prop:coind} with $T = K_2$.  For some $k \geq 0$, we have $\hom({\mathcal S}_k, G) \neq \emptyset$ if and only if there exists a graph homomorphism $S_{k,m} \rightarrow G$ for some $m$.  By \prettyref{prop:coind} this is the case exactly when we have $\coind_{{\mathbb Z}_2} \bigl(\Hom(K_2,G)\bigr) \geq k$.  The result follows.
\end{proof}

\subsection{A conjecture of Lov\'{a}sz}\label{sec:Lovconj}
We next see how the twisted toroidal graphs from \prettyref{sec:toroidal} (see \prettyref{defn:Tgraphs}) lead us to a proof of a weakened version of the following conjecture, attributed to Lov\'{a}sz.

\begin{conj}\label{conj:Lovconjecture}
Let $G$ be a graph with no loops and at least one edge.  If $\Hom(T,G)$ is empty or
$k$-connected for all graphs $T$ of maximum degree $\leq d$, then
$\chi(G) \geq k + d + 2$.
\end{conj}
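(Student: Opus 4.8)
The plan is to derive the inequality from a single suitably chosen bounded-degree \emph{Stiefel-Whitney test graph}, exploiting the two features that underlie all the known partial results: the test-graph estimate $\chi(G)\ge\Ht_{\Z_2}\Hom(T,G)+\chi(T)$ combined with $\conn X+1\le\Ht_{\Z_2}X$, and a \emph{duality} statement guaranteeing that such a $T$ actually admits a homomorphism into $G$. As a first move I would feed $T=K_2$ (of maximum degree $1\le d$) into the hypothesis: since $G$ has an edge, $\Hom(K_2,G)\ne\emptyset$, hence $\Hom(K_2,G)$ is $k$-connected and $\coind_{\Z_2}\Hom(K_2,G)\ge k+1$. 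The problem then reduces to exhibiting a Stiefel-Whitney test graph~$T$ with $\Delta(T)\le d$, with chromatic number as large as possible --- ideally $\chi(T)=d+1$ --- such that $\coind_{\Z_2}\Hom(K_2,G)\ge k+1$ forces $\Hom(T,G)\ne\emptyset$. Granting such a~$T$, the hypothesis makes $\Hom(T,G)$ $k$-connected, and \prettyref{eq:testgraph} yields $\chi(G)\ge(k+1)+\chi(T)=(k+1)+(d+1)=k+d+2$, which is exactly \prettyref{conj:Lovconjecture}.

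For the construction I would use the twisted toroidal graphs $T_{j,m}$ of \prettyref{defn:Tgraphs} (the generalized Mycielski graphs $M_m^jK_2$ of \prettyref{def:myc} would serve equally well). By \prettyref{prop:Tkmgraphs} every vertex of $T_{j,m}$ has degree $3^j$; by \prettyref{prop:Tkmtestgraph} each $T_{j,m}$ is a Stiefel-Whitney test graph of chromatic number $j+2$; and by \prettyref{cor:mapsfrom} the condition $\coind_{\Z_2}\Hom(K_2,G)\ge j$ already produces a homomorphism $T_{j,m}\to G$ for all sufficiently large~$m$. Taking $j$ maximal with $3^j\le d$ (when $d$ is very large compared to $k$ one takes $j=k+1$ instead, which only weakens the conclusion), the scheme described above applies and yields $\chi(G)\ge k+j+3=k+\lfloor\log_3 d\rfloor+3$. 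This proves the weakened form of the conjecture, and for $d=1$ and $d=2$ it specializes exactly to the theorems of Lov\'{a}sz (with $K_2$) and of Babson and Kozlov (with the odd cycle $C_{2r+1}$, a maximum-degree-$2$ test graph of chromatic number~$3$ enjoying the same duality).

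The main obstacle to the full statement is the gap between the \emph{maximum degree} and the \emph{chromatic number} of the test graphs at our disposal. Each application of the twisted product $\qm\times_{\Z_2}\Cpm$ multiplies all vertex degrees by~$3$ (the Mycielski step multiplies them by~$2$) while raising the chromatic number by only~$1$, since the looped $1$-skeleton~$P^1$ of any triangulation of $\Sphere^1$ necessarily has vertices of degree~$3$; consequently every test graph produced by these methods has chromatic number only \emph{logarithmic} in its maximum degree, which is exactly why one lands at $k+\log_3 d+3$ rather than $k+d+2$. To close the gap one would need a genuinely new family of bounded-degree Stiefel-Whitney test graphs whose chromatic number grows \emph{linearly} in the maximum degree and which still satisfies the duality property of \prettyref{cor:mapsfrom}. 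The complete graph $K_{d+1}$ has precisely the required degree and chromatic number, but it fails the duality property --- triangle-free graphs of arbitrarily large chromatic number admit no homomorphism from~$K_3$, let alone from $K_{d+1}$ --- so the genuine content of \prettyref{conj:Lovconjecture} is the existence of such a ``lean'' universal family of test graphs, and constructing one (or proving the conjecture by other means) is where I expect the real difficulty to lie.
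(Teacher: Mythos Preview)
Your proposal is well-aligned with the paper: this statement is a \emph{conjecture} there and is not proved; the paper establishes only the weakened bound
\[
\chi(G)\ge\min\{k+1,\lfloor\log_3 d\rfloor\}+k+3
\]
via exactly the route you outline---feed $K_2$ into the hypothesis to get $\coind_{\Z_2}\Hom(K_2,G)\ge k+1$, invoke \prettyref{cor:mapsfrom} to produce $T_{j,m}\to G$ for $j\le k+1$ and $m$ large, then apply the test-graph inequality for $T_{j,m}$ with $j=\min\{k+1,\lfloor\log_3 d\rfloor\}$. Your diagnosis of the obstruction (chromatic number of available test graphs grows only logarithmically in maximum degree) is also the paper's.

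One small correction to your aside: the generalized Mycielski graphs $M_m^jK_2$ would \emph{not} serve equally well here. The apex vertex of $M_mG$ has degree $|V(G)|$, so the maximum degree of $M_m^jK_2$ is not bounded independently of~$m$; this is precisely the contrast the paper draws in the remark following \prettyref{prop:Tkmgraphs}. The twisted toroidal graphs are used specifically because $\Delta(T_{j,m})=3^j$ does not depend on~$m$.
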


In \cite{BW04} Brightwell and Winkler have managed to prove a weaker
version of this conjecture for the case $k = 0$.
\begin{thm}[Brightwell and Winkler]
If $\Hom(T,G)$ is empty or connected for all graphs $T$ of maximum
degree $\leq d$ then $\chi(G) \geq \frac{d}{2} + 2$.
\end{thm}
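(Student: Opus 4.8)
The plan is to argue the contrapositive: for a loopless graph $G$ with at least one edge and $\chi(G)=c$, produce a single connected graph $T$ with maximum degree at most $2c-3$ such that $\Hom(T,G)$ is nonempty and disconnected. Since the degree bound only relaxes as $d$ grows, such a $T$ simultaneously refutes the hypothesis for every $d$ with $c<\tfrac d2+2$ (equivalently $d\ge 2c-3$), which is exactly what is needed. Throughout, ``graph~$T$'' should be read as a connected graph with at least one edge, as is implicit in the statement.

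The first reduction I would make is to a \emph{stiff} target, one in which no vertex $u$ is dominated by another vertex $v$ (with $N(u)\subseteq N(v)$). Deleting a dominated vertex is a fold; it comes with a retraction $G\to G'$ and hence preserves $\chi$, and it induces a deformation retraction $|\Hom(T,G)|\to|\Hom(T,G')|$ for every $T$ (a standard fact about folds in the second coordinate), so it changes neither emptiness nor connectedness of the relevant complexes. Iterating, we may assume $G$ is stiff. Stiffness is what produces disconnectedness, through the following rigidity computation: if two homomorphisms $\phi,\psi\colon T\to G$ both lie below a common multihomomorphism $\alpha\in\Hom(T,G)$, then $\phi(v)$ is adjacent to $\psi(w)$ whenever $v\sim w$ in $T$; when $T$ is chosen so that each of its vertices has a neighbourhood rich enough to pin down the image, one concludes $N(\phi(v))\subseteq N(\psi(v))$ for all $v$ and then, by stiffness, $\psi=\phi$. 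Two easy special cases already appear here: when $c=2$ the stiff target is bipartite and $T=K_2$ works, since $\Hom(K_2,G)$ is disconnected for every bipartite $G$ with an edge (it splits along the bipartition), and here $\Delta(K_2)=1=2c-3$; and whenever $\Delta(G)\le 2c-3$ and $G$ admits an endomorphism other than the identity, one can take $T=G$, because $\mathrm{id}_G$ is then an isolated point of $\Hom(G,G)$ by the computation above.

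The remaining case — a stiff $G$ of chromatic number $c\ge 3$ whose maximum degree exceeds $2c-3$ — is where the real work lies, and I expect it to be the main obstacle. Here one must build an obstruction gadget $T$ of maximum degree at most $2c-3$, not equal to $G$ itself, together with a homomorphism $T\to G$ that the rigidity computation isolates, plus a second homomorphism in another component of $\Hom(T,G)$. The natural starting point is a $c$-vertex-critical subgraph $G_0\subseteq G$ (every vertex of $G_0$ has degree at least $c-1$), from which one manufactures $T$ — on a vertex set recording, say, ordered edges or local colour patterns of $G_0$ — so that any homomorphism $T\to G$ is forced to be locally rigid while the degree stays bounded. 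The factor $\tfrac12$ in the bound $\tfrac d2+2$, as opposed to the conjectured $d+2$, is exactly the cost of having each vertex of $T$ simultaneously watch both endpoints of an edge of $G_0$; controlling the maximum degree of $T$ while keeping $\Hom(T,G)$ disconnected — in particular dealing with stiff targets carrying short even cycles or very high-degree vertices — is the genuine combinatorial content, and this is what Brightwell and Winkler carry out in \cite{BW04}; pushing the constant up to $d+2$ is the open case $k=0$ of \prettyref{conj:Lovconjecture}. For the present paper one may of course simply invoke \cite{BW04} for the statement itself.
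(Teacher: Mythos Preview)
The paper does not prove this theorem at all: it is simply stated and attributed to Brightwell and Winkler with a citation to \cite{BW04}, and then used as context for the authors' own weakened form of \prettyref{conj:Lovconjecture}. So there is no ``paper's own proof'' to compare your attempt against; the correct thing to do here is exactly what you say in your final sentence, namely invoke \cite{BW04}.

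As for the sketch you give before that final sentence: the contrapositive strategy and the reduction to a stiff target via folds are reasonable first moves, and the rigidity observation that $\id_G$ is isolated in $\Hom(G,G)$ when $G$ is stiff is standard. But you correctly identify that the substantive case --- stiff $G$ with $\chi(G)=c$ and $\Delta(G)>2c-3$ --- is where the actual work lies, and you do not carry it out; you gesture at a gadget built from a critical subgraph and then defer to \cite{BW04}. That is not a proof, it is an outline of where a proof would need to go, and the construction of $T$ with the required degree bound and disconnected $\Hom(T,G)$ is genuinely delicate. If you want to include a proof rather than a citation, you would need to actually build and verify that gadget; otherwise, just cite the result as the paper does.
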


We note that the $d=2$ case of \prettyref{conj:Lovconjecture}
follows from what we already know.  If $\Hom(K_2,G)$ is
$k$-connected, then $\Hom(C_{2r+1}, G)$ is nonempty for some $r$.
Hence the space $\Hom(C_{2r+1},G)$ is $k$-connected by assumption, and so
$\chi(G) \geq k + 4 = k + d + 2$ by the Babson-Kozlov result. We can
apply this same simple argument with the graphs $T_{k,m}$ to get the
following (weakened) version of the original conjecture.

\begin{prop}
Suppose $G$ is a graph with no loops and at least one edge.  If $\Hom(T,G)$ is empty or $k$-connected for every graph~$T$ with
maximum degree $\leq d$, then
\[\chi(G) \geq \min\{k+1, \lfloor \log_3 d \rfloor\} +
k + 3.\] \noindent In particular, if $d=3^{k+1}$, we have $\chi(G)
\geq 2k + 4$.
\end{prop}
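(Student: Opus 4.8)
The plan is to run the argument used just above for the $d=2$ case, replacing the odd cycle $C_{2r+1}=T_{1,m}$ by a twisted toroidal graph $T_{j,m}$ of the largest index $j$ we can afford: large enough to extract a good connectivity bound, yet small enough that its maximum degree $3^j$ stays at most $d$. Set $j\deq\min\{k+1,\lfloor\log_3 d\rfloor\}$. Since $G$ is loopless and has an edge, $\Hom(K_2,G)\ne\emptyset$; since $K_2$ has maximum degree $1\le d$, the hypothesis forces $\Hom(K_2,G)$ to be $k$-connected, and as it is a free $\Z_2$-space the chain of inequalities collected in \prettyref{sec:defs} gives
\[
\coind_{\Z_2}\Hom(K_2,G)\ \ge\ \conn\Hom(K_2,G)+1\ \ge\ k+1\ \ge\ j .
\]
Hence \prettyref{cor:mapsfrom} (which rests on \prettyref{prop:mainprop2} and ultimately \prettyref{thm:firstentry}), applied with index $j$, yields a graph homomorphism $T_{j,m}\to G$ for all sufficiently large odd $m$; in particular $\Hom(T_{j,m},G)\ne\emptyset$. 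When $j=0$ this is just the inclusion $K_2\to G$.

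Next I would combine the two structural facts about the twisted toroidal graphs. By \prettyref{prop:Tkmgraphs} (for $j\ge1$, with $m$ odd; the case $j=0$ being trivial) every vertex of $T_{j,m}$ has degree $3^j\le 3^{\lfloor\log_3 d\rfloor}\le d$, so $T_{j,m}$ is one of the graphs to which the hypothesis applies, and since $\Hom(T_{j,m},G)$ is nonempty it must therefore be $k$-connected. By \prettyref{prop:Tkmtestgraph}, $T_{j,m}$ is a Stiefel-Whitney test graph — hence a homotopy test graph — with $\chi(T_{j,m})=j+2$. Combining these,
\[
\chi(G)\ \ge\ \chi(T_{j,m})+\conn\bigl(\Hom(T_{j,m},G)\bigr)+1\ \ge\ (j+2)+k+1\ =\ \min\{k+1,\lfloor\log_3 d\rfloor\}+k+3,
\]
which is the claimed bound. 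The ``in particular'' follows by taking $d=3^{k+1}$: then $\lfloor\log_3 d\rfloor=k+1$, so the right-hand side is $2k+4$.

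I do not expect a genuine obstacle here — the substance is entirely contained in \prettyref{cor:mapsfrom}, \prettyref{prop:Tkmgraphs} and \prettyref{prop:Tkmtestgraph}, and what remains is their assembly in the style of the $d=2$ remark. The only points that need care are bookkeeping ones: the constraint $j\le k+1$ is exactly what lets us feed $\coind_{\Z_2}\Hom(K_2,G)\ge j$ into \prettyref{cor:mapsfrom}; the constraint $j\le\lfloor\log_3 d\rfloor$ is exactly what keeps $3^j\le d$; and one must observe that a single sufficiently large odd $m$ can be chosen so that both the homomorphism-existence conclusion of \prettyref{cor:mapsfrom} and the hypotheses of \prettyref{prop:Tkmgraphs} hold simultaneously. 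The degenerate case $j=0$ (i.e.\ $d\le2$) merely recovers the Lov\'{a}sz bound $\chi(G)\ge\conn\Hom(K_2,G)+3\ge k+3$ and uses none of the machinery of \prettyref{sec:toroidal}.
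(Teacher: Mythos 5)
Your proof is correct and follows essentially the same route as the paper's: choose $j=\min\{k+1,\lfloor\log_3 d\rfloor\}$, use $k$-connectivity of $\Hom(K_2,G)$ together with \prettyref{cor:mapsfrom} to produce a homomorphism $T_{j,m}\to G$, observe via \prettyref{prop:Tkmgraphs} that $T_{j,m}$ has maximum degree $3^j\le d$, and invoke \prettyref{prop:Tkmtestgraph} to convert the resulting $k$-connectivity of $\Hom(T_{j,m},G)$ into the bound $\chi(G)\ge j+k+3$. Your extra care about choosing $m$ odd (to match the hypotheses of \prettyref{prop:Tkmgraphs}) and your explicit treatment of the degenerate case $j=0$ are small refinements the paper leaves implicit, not a different argument.
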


\begin{proof}
Since $G$ has at least one edge, we have that $\Hom(K_2,G)$ is
nonempty and hence by assumption is $k$-connected.  So then we have
$\coind_{{\mathbb Z}_2} \Hom(K_2, G) \geq k+1$, and hence by \prettyref{cor:mapsfrom}
we have a graph homomorphism $T_{j,m} \rightarrow G$ for some
$m$, for all $j \leq k+1$.  From \prettyref{prop:Tkmgraphs} we have that $T_{j,m}$ has maximum degree
$d_j = 3^j$. We take $j = \min \{k+1, \lfloor \log_3 d \rfloor\}$, and by assumption we get that $\Hom(T_{j,m},G)$ is
$k$-connected.  Since the $T_{j,m}$ are Stiefel-Whitney test graphs
(and hence homotopy test graphs) this implies that
\[\chi(G) \geq \chi(T_{j,m}) + k + 1 = j + 2 + k + 1 = \min\{k+1, \lfloor \log_3 d \rfloor\} + k + 3.\]

\noindent In the case $d = 3^{k+1}$ we take $j = k+1$ and get
$\chi(G) \geq 2k + 4$.
\end{proof}

\begin{rem}
We note that in the case of $T_{1,m}$  we have subgraphs of smaller degree (namely the odd cycles) which suffice to provide the desired topological bounds on chromatic number.  It would be interesting to find similar subgraphs of $T_{k,m}$ for larger $k$.
\end{rem}

\subsection{Index and coindex of $\Hom$ complexes}
In \prettyref{prop:coind} we saw that homomorphisms from the spherical graphs gave a graph-theoretic interpretation of the coindex of $\Hom$ complexes.  In fact we can use \prettyref{thm:firstentry} to give a graph-theoretical interpretation of the \emph{space} of all ${\mathbb Z}_2$-maps from a $k$-sphere (with the antipodal action) into the complex $\Hom(K_2,G)$.  Recall that $S_{k,m}$ are the spherical graphs defined in \prettyref{def:Snk}.

\begin{prop}
Let $T$ be a graph with a right ${\mathbb Z}_2$-action.  Then we
have
\[\colim_m \Hom(S_{k,m}, G)
\simeq_{{\mathbb Z}_2} \Map_{{\mathbb Z}_2}\bigl({\mathbb S}^k_b,
\Hom(K_2,G)\bigr),\] where the direct system that defines the colimit is
described in the proof.
\end{prop}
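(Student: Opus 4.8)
The plan is to identify the colimit $\colim_m \Hom(S_{k,m}, G)$ with the mapping space $\Map_{\Z_2}(\Sphere^k_b, \Hom(K_2,G))$ by passing through the poset-of-equivariant-maps model supplied by \prettyref{thm:firstentry}. Recall that $S_{k,m} = K_2 \times_{\Z_2} \bigl(F(X^k_m)\bigr)^1$, where $X^k_m$ is the $m$th barycentric subdivision of the boundary of the $(k+1)$-dimensional cross polytope; the simplicial maps $X^k_{m+1}\to X^k_m$ (sending a simplex to the smallest face of $X^k_m$ containing it, say) induce $\Z_2$-equivariant poset maps $F(X^k_{m+1})\to F(X^k_m)$, hence graph homomorphisms $S_{k,m+1}\to S_{k,m}$, and applying $\Hom(\qm, G)$ gives the direct system $\Hom(S_{k,m},G)\to\Hom(S_{k,m+1},G)$ whose colimit we wish to compute. (Here $G$ is implicitly loopless, or at least a graph for which $\Hom(K_2,G)$ carries a $\Z_2$-action; note the statement says ``Let $T$ be a graph with a right $\Z_2$-action,'' which appears to be a typo for $K_2$, and I will take $T=K_2$ as in the surrounding discussion.)

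First I would apply \prettyref{thm:firstentry} with $\Gamma = \Z_2$, $P = F(X^k_m)$, and $T = K_2$ to get natural homotopy equivalences
\[\Hom\bigl(S_{k,m},G\bigr) = \Hom\bigl(K_2\times_{\Z_2} F(X^k_m)^1, G\bigr) \simeq \Poset_{\Z_2}\bigl(F(X^k_m), \Hom(K_2,G)\bigr),\]
and these are $\Z_2$-equivariant and natural in $m$, so they induce a $\Z_2$-homotopy equivalence of the two direct systems and hence
\[\colim_m \Hom(S_{k,m},G)\simeq_{\Z_2} \colim_m \Poset_{\Z_2}\bigl(F(X^k_m), \Hom(K_2,G)\bigr).\]
The remaining task is therefore to show that the right-hand colimit is $\Z_2$-homotopy equivalent to $\Map_{\Z_2}(\Sphere^k_b, \Hom(K_2,G))$. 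The idea is that a map of posets $F(X^k_m)\to\Hom(K_2,G)$ realizes to a continuous map $\real{X^k_m}\homeo \Sphere^k \to \real{\Hom(K_2,G)}$, and conversely, by equivariant simplicial approximation (as already used in the proof of \prettyref{prop:coind}), any $\Z_2$-map $\Sphere^k_b\to\real{\Hom(K_2,G)}$ is, after subdividing $\Sphere^k$ finely enough — i.e. passing to $X^k_m$ for $m$ large — equivariantly homotopic to one induced by a poset map $F(X^k_m)\to\Hom(K_2,G)$ (sending a simplex to the join/maximum of the images of its vertices, which lands in $\Hom(K_2,G)$ since that poset is the face poset of a complex). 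Taking the colimit over $m$ exactly captures this ``approximate well enough'' quantifier, and two poset maps on different $X^k_m$'s that realize to homotopic maps become identified further along the system; this should give mutually inverse maps on $\pi_0$ and, more carefully, a weak homotopy equivalence of the colimit with the mapping space. The direct system referenced in the statement (``described in the proof'') is precisely the one spelled out above: $\Hom(S_{k,m},G)\to\Hom(S_{k,m+1},G)$ induced by the subdivision maps.

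The main obstacle I expect is making the simplicial-approximation step functorial enough to produce an actual $\Z_2$-homotopy equivalence $\colim_m \Poset_{\Z_2}(F(X^k_m),\Hom(K_2,G))\simeq_{\Z_2}\Map_{\Z_2}(\Sphere^k_b,\Hom(K_2,G))$ rather than merely a $\pi_0$-bijection or weak equivalence. One must check that the colimit of the realizations $\real{\Poset_{\Z_2}(F(X^k_m),\cdot)}$ — where the bonding maps are precompositions with subdivisions — models the mapping space, which is essentially the statement that barycentric subdivisions form a cofinal system of ``approximations'' to $\Sphere^k$ in the sense needed for mapping spaces; this is where the hypotheses on $X^k_m$ (maximal simplex diameter $\to 0$) are used, exactly as in \prettyref{prop:coind}. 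A clean way to organize this is to compare both sides with $\colim_m \Map_{\Z_2}(\real{X^k_m}, \real{\Hom(K_2,G)})$ restricted to simplicial maps, invoking that singular realization commutes with the relevant colimits and that $\real{X^k_m}\homeo_{\Z_2}\Sphere^k_b$ compatibly with the subdivision maps up to equivariant homotopy. Once that identification is in hand, naturality with respect to automorphisms of $K_2$ (the $\Z_2$-action) is automatic from the naturality in \prettyref{thm:firstentry}, and the result follows. This is the same circle of ideas that underlies \prettyref{thm:limitSch}, specialized from $C_{2r+1}$ to the spherical graphs $S_{k,m}$.
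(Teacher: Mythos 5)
Your proposal follows the same route as the paper's proof, which is itself only a sketch: apply \prettyref{thm:firstentry} to rewrite $\Hom(S_{k,m},G)$ as $\Poset_{\Z_2}\bigl(F(X^k_m),\Hom(K_2,G)\bigr)$, describe the direct system via the subdivision poset maps $F(X^k_{m+1})\to F(X^k_m)$, and then invoke equivariant simplicial approximation (as in \prettyref{prop:coind}) to identify the colimit with the equivariant mapping space. Your observation that the hypothesis ``let $T$ be a graph with a right $\Z_2$-action'' is vestigial and that $T=K_2$ is intended matches the paper's conclusion, and your flagged concern about upgrading the simplicial-approximation step to a genuine $\Z_2$-homotopy equivalence is legitimate but also left implicit in the paper, which defers to \cite{Schspace} ($k=1$) and \cite{DocGro} (non-equivariant case) for the detailed argument.
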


\begin{proof}
The proof runs along the lines of \prettyref{prop:coind}, and so we provide only a sketch here.  A similar statement for the case $k=1$ was proved in \cite{Schspace} and a non-equivariant version was established in \cite{DocGro}.

We apply \prettyref{thm:firstentry}, with the spherical graphs $S_{k,m}\deq K_2\times_{\Z_2}\bigl(F(X^k_m)\bigr)^1$ in place of $T \times_{\Z_2} P^1$.  To
complete the proof we need to describe the direct system involved
in the colimit.  For a fixed $k$ we have a natural poset map $F(X^{k}_{m+1}) \rightarrow F(X^k_m)$ which induces a graph homomorphism $S_{k,m+1} \rightarrow S_{k,m}$.  This homomorphism respects each of the ${\mathbb Z}_2$-actions, and gives us our desired direct system.  Since any ${\Z_2}$-map from a $k$-sphere into $\Hom(T,G)$ can be approximated up to homotopy as a map from $X^k_m$ for $m$ sufficiently large, the result follows.
\end{proof}

In addition, we can use the observation from \prettyref{rem:graphversion} along with the fact that $\Hom(K_2, K_n)$ is a sphere (see \cite{BKcom}) to give the following interpretation of the \emph{index} of the free ${\mathbb Z}_2$-space $\Hom(T, G)$.  For a graph $G$ let $BG$ be the graph whose vertices are cliques of looped vertices of $G$ and in which two cliques are neighbors if one of them contains the other; $B^{i+1}G$ can be thought of as the looped 1-skeleton of the $i$th barycentric subdivision of the order complex of $\Hom({\bf 1}, G)$.  We then get the following.

\begin{prop}\label{prop:ind}
Let $T$ be a graph with an involution that flips an edge.  Then the index of $\Hom(T, G)$ is the minimum of the chromatic numbers of $B^i(G^T) \times_{\Z_2} K_2 $, $i\ge0$.
\end{prop}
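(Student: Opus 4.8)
The plan is to unwind the definition of the index through a chain of equivalences, using \prettyref{thm:firstentry} to trade equivariant poset maps for graph homomorphisms and the identification $\real{\Hom(K_2,K_n)}\cong_{\Z_2}\Sphere^{n-2}$ (from \cite{BKcom}) as the sphere that enters the definition of $\ind$. First I would replace $\Hom(T,G)$ by a poset that one can map \emph{out of} by poset maps. The exponential graph $G^T$ is reflexive --- every homomorphism $T\to G$ is a looped vertex of $G^T$ --- so $\Hom(\mathbf 1,G^T)$ is the poset of cliques of $G^T$, its order complex is the clique complex of $G^T$, and for every $i\ge0$ one has $B^i(G^T)=\bigl(\Chain^i\Hom(\mathbf 1,G^T)\bigr)^1$, with $\real{\Chain^i\Hom(\mathbf 1,G^T)}$ equivariantly homeomorphic to $\real{\Hom(\mathbf 1,G^T)}$ (iterated barycentric subdivision). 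By \prettyref{rem:graphversion} there is a $\Z_2$-homotopy equivalence $\real{\Hom(T,G)}\simeq_{\Z_2}\real{\Hom(\mathbf 1,G^T)}$, where $\Z_2$ acts on $G^T$ via the edge-flipping involution of $T$; since $\ind$ is a $\Z_2$-homotopy invariant, $\ind\Hom(T,G)=\ind\real{\Hom(\mathbf 1,G^T)}$.

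Next, fixing $n\ge0$ and using $\real{\Hom(K_2,K_n)}\cong_{\Z_2}\Sphere^{n-2}$, the inequality $\ind\Hom(T,G)\le n-2$ holds if and only if there is a $\Z_2$-map $\real{\Hom(\mathbf 1,G^T)}\to\real{\Hom(K_2,K_n)}$. I would then apply equivariant simplicial approximation to the finite free $\Z_2$-complex $\real{\Hom(\mathbf 1,G^T)}$, subdividing only the domain: such a map exists if and only if for some $i\ge0$ there is a $\Z_2$-poset map $\Chain^i\Hom(\mathbf 1,G^T)\to\Hom(K_2,K_n)$. (For the easy direction a poset map realizes to an equivariant map and the domain realizes homeomorphically to $\real{\Hom(\mathbf 1,G^T)}$; for the other direction one approximates the given map equivariantly by a simplicial map into the order complex of $\Hom(K_2,K_n)$ and, on one further barycentric subdivision, turns it into a poset map by sending each simplex to the maximum of the images of its vertices, exactly as in the proof of \prettyref{prop:coind}.) Finally \prettyref{thm:firstentry}, applied with $\Gamma=\Z_2$, the graph $K_2$ with its nontrivial involution in the first slot, $P=\Chain^i\Hom(\mathbf 1,G^T)$ and target $K_n$, gives a homotopy equivalence $\Poset_{\Z_2}\bigl(\Chain^i\Hom(\mathbf 1,G^T),\Hom(K_2,K_n)\bigr)\simeq\Hom\bigl(K_2\times_{\Z_2}B^i(G^T),K_n\bigr)$, so one side is nonempty exactly when the other is; since a nonempty $\Hom$-poset into $K_n$ means a graph homomorphism into $K_n$, a $\Z_2$-poset map $\Chain^i\Hom(\mathbf 1,G^T)\to\Hom(K_2,K_n)$ exists if and only if $\chi(B^i(G^T)\times_{\Z_2}K_2)\le n$ (using $K_2\times_{\Z_2}B^i(G^T)\cong B^i(G^T)\times_{\Z_2}K_2$). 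Concatenating the equivalences, $\ind\Hom(T,G)\le n-2$ if and only if $\chi(B^i(G^T)\times_{\Z_2}K_2)\le n$ for some $i\ge0$, which exhibits $\ind\Hom(T,G)$ in terms of $\min_{i\ge0}\chi(B^i(G^T)\times_{\Z_2}K_2)$ as asserted.

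The step I expect to be the main obstacle is the middle one: making rigorous the passage between a continuous equivariant map of realizations and an equivariant poset map with only the domain subdivided --- verifying that equivariant simplicial approximation is available for the free $\Z_2$-action on $\real{\Hom(\mathbf 1,G^T)}$, and that the ``maximum of the images'' assignment is both order preserving and $\Z_2$-equivariant as a map into $\Hom(K_2,K_n)$. One must also check that the equivalences furnished by \prettyref{thm:firstentry} and \prettyref{rem:graphversion} are natural and equivariant enough to carry along both (non)emptiness and the $\Z_2$-actions. Here the reflexivity of $G^T$ --- used throughout to identify $B^i(G^T)$ with $(\Chain^i\Hom(\mathbf 1,G^T))^1$ --- and the freeness of the action both rest on the hypotheses: the edge-flipping involution of $T$ together with $G$ being loopless, so that $\Hom(T,G)$, equivalently $\Hom(\mathbf 1,G^T)$, is a genuine free $\Z_2$-space and $\ind$ is defined.
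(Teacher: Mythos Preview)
Your approach is essentially the same as the paper's: replace $\Hom(T,G)$ by $\Hom(\mathbf 1,G^T)$ equivariantly, model the target sphere as $\real{\Hom(K_2,K_n)}$, reduce continuous $\Z_2$-maps to $\Z_2$-poset maps via equivariant simplicial approximation on iterated barycentric subdivisions of the domain, and then apply \prettyref{thm:firstentry} (packaged in the paper as \prettyref{rem:graphversion}) to convert equivariant poset maps into colorings of $B^i(G^T)\times_{\Z_2}K_2$. One small correction: $G^T$ is \emph{not} reflexive---its looped vertices are precisely the graph homomorphisms $T\to G$, while all other vertex maps are unlooped---so strictly $(\Chain^i\Hom(\mathbf 1,G^T))^1=S(B^i(G^T))$ rather than $B^i(G^T)$; since $B^i(G^T)$ is reflexive by construction for every $i\ge1$, this affects only the $i=0$ term and does not disturb the argument.
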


\begin{proof}
We make the appropriate substitutions in \prettyref{rem:graphversion} to obtain
\begin{equation}\label{eq:index}
\Poset_{{\mathbb Z_2}} \bigl(\Hom({\bf 1},G^T),\Hom(K_2,K_n)\bigr) \simeq \Hom \bigl(S(G^T) \times_{{\mathbb Z}_2} K_2, K_n\bigr).
\end{equation}
Hence, given a ${\mathbb Z}_2$-equivariant map
\[|\Hom({\bf 1}, G^T)| \simeq_{{\mathbb Z}_2} |\Hom(T,G)| \rightarrow_{{\mathbb Z}_2} {\mathbb S}^n \simeq |\Hom(K_2,K_n)|,\]
\noindent
we use simplicial approximation to obtain an equivariant map
\[\Chain^i \bigl(\Hom({\bf 1}, G^T) \bigr) \rightarrow_{{\mathbb Z}_2} \Hom(K_2, K_n).\]
\noindent
But we have $\Hom({\bf 1}, B^i(G^T)) = \Chain^i \big(\Hom({\bf 1}, G^T)\big)$, so that
\[\Hom({\bf 1}, B^i(G^T)) \rightarrow_{{\mathbb Z}_2} \Hom(K_2,K_n).\]
\noindent
Applying \prettyref{eq:index} provides us with a coloring $B^i(G^T) \rightarrow K_n$.

On the other hand if we have a coloring $B^i(G^T) \times_{{\mathbb Z}_2} K_2 \rightarrow K_n$, then \prettyref{eq:index} gives us an equivariant map
\[\Chain^i \bigl(\Hom({\bf 1}, G^T)\bigr) = \Hom({\bf 1}, B^i(G^T)) \rightarrow_{{\mathbb Z}_2} \Hom(K_2,K_n) \simeq_{{\mathbb Z}_2} {\mathbb S}^n,\]
\noindent
and hence an equivariant map $|\Hom(T,G)| \rightarrow_{{\mathbb Z}_2} {\mathbb S}^n$.
\end{proof}

\section{Proofs}\label{sec:proofs}
In this section we provide the complete proofs of \prettyref{thm:firstentry} and \prettyref{thm:secondentry}, out main structural results mentioned in the first section of the paper.

\subsection{Proofs of \prettyref{thm:firstentry} and related lemmas}
We begin with \prettyref{thm:firstentry}.  For this we first establish certain auxiliary results which are used in the proof of the theorem at the end of the section.  We need the following construction.
\begin{defn}\label{defn:Homaction}
Let $\Gamma$ be a group and suppose $G$ and $H$ are graphs with left $\Gamma$-actions.  The poset $\Hom(G,H)$ then has a left $\Gamma$-action given by
\[(\gamma \cdot \alpha)(g) = \gamma\cdot \bigl(\alpha(\gamma^{-1}\cdot g)\bigr),\]
for $\alpha \in \Hom(G,H)$ and $\gamma \in \Gamma$.  Define $\Hom_{\Gamma}(G,H)$ to be the subposet of $\Hom(G,H)$ given by the fixed point set of this action.
\end{defn}
We note that the equivariant graph homomorphisms $f:G \rightarrow_{\Gamma} H$ are (in general only some of the) atoms of $\Hom_{\Gamma}(G,H)$.

\begin{lemma} \label{lemma:EquiAdjoint}
Let $T$, $G$, and $H$ be graphs and suppose $\Gamma$ is a group acting from the right on $T$, on the left on $H$, and hence on the right on $T \times H$ by $(t,h) \cdot \gamma = (t \gamma^{-1}, \gamma h)$.  Then the poset $\Hom(T \times_{\Gamma} H, G)$ can be included in $\Hom_{\Gamma}(H, G^T)$ so that $\Hom(T \times_{\Gamma} H, G)$ is the image of a closure map on $\Hom_{\Gamma}(H, G^T)$.  In particular, we have the inclusion of a strong deformation retract of posets
\begin{center}
$\xymatrix{ \Hom(T \times_{\Gamma} H, G) \ar@{^{(}->}[r]_-\simeq & \Hom_{\Gamma}(H, G^T). }$
\end{center}
\end{lemma}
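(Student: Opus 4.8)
The plan is to deduce the lemma from the non-equivariant ``exponential adjunction at the poset level'' and then pass to $\Gamma$-fixed points.

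First I would write down the two maps explicitly. Define $\Phi\colon\Hom(T\times H,G)\to\Hom(H,G^T)$ by $\Phi(\alpha)(h)\deq\prod_{t\in V(T)}\alpha(t,h)$ (the set of all $f\colon V(T)\to V(G)$ with $f(t)\in\alpha(t,h)$ for every $t$), and define $c\colon\Hom(H,G^T)\to\Hom(H,G^T)$ by $c(\beta)(h)\deq\prod_{t\in V(T)}\pi_t\bigl(\beta(h)\bigr)$, where $\pi_t\colon V(G)^{V(T)}\to V(G)$ is evaluation at $t$. The verifications are routine: (i) $\Phi(\alpha)$ satisfies the multihomomorphism condition at an edge $h\sim h'$ of $H$ precisely because $\alpha$ satisfies it at all edges $(t,h)\sim(t',h')$ of $T\times H$; (ii) the same ``slicewise'' unwinding shows $c(\beta)\in\Hom(H,G^T)$ whenever $\beta\in\Hom(H,G^T)$; (iii) $c$ is a closure operator (monotone, inflationary since $\beta(h)\subseteq\prod_t\pi_t(\beta(h))$, and idempotent); (iv) $\Phi$ is an isomorphism of posets onto its image, which is exactly $\mathrm{Im}(c)$, the subposet of ``box-valued'' (product-valued) multihomomorphisms $H\to G^T$. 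Invoking the standard fact that a closure operator on a finite poset induces a strong deformation retraction of its order complex onto the order complex of the image subposet, $\Phi$ then exhibits $\Hom(T\times H,G)$ as a strong deformation retract of $\Hom(H,G^T)$, naturally in $G$.

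Next I would install the $\Gamma$-actions and check equivariance. The right $\Gamma$-action on $T$ together with the trivial action on $G$ induces, by precomposition (contravariance of the exponential in its first argument), a left $\Gamma$-action on $G^T$; combined with the given left action on $H$ this yields, via \prettyref{defn:Homaction}, a left $\Gamma$-action on $\Hom(H,G^T)$ whose fixed-point poset is $\Hom_\Gamma(H,G^T)$. Likewise the diagonal $\Gamma$-action on $T\times H$ (as specified in the statement) together with the trivial action on $G$ gives a $\Gamma$-action on $\Hom(T\times H,G)$, and an elementary check identifies its fixed-point poset with $\Hom(T\times_\Gamma H,G)$: a $\Gamma$-invariant assignment $V(T)\times V(H)\to 2^{V(G)}\setminus\{\emptyset\}$ is the same datum as an assignment on $\Gamma$-orbits, and the multihomomorphism condition for the quotient graph $T\times_\Gamma H$ matches the $\Gamma$-invariant multihomomorphism condition upstairs because adjacency in $T\times_\Gamma H$ is defined by the existence of adjacent representatives. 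One then reads off from the defining formulas that both $\Phi$ and $c$ are $\Gamma$-equivariant.

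Finally, equivariance of $c$ forces it to take fixed points to fixed points, hence to restrict to a closure operator $c^\Gamma$ on $\Hom_\Gamma(H,G^T)$ whose image is $\mathrm{Im}(c)\cap\Hom_\Gamma(H,G^T)$; equivariance of the isomorphism $\Phi$ onto $\mathrm{Im}(c)$ then restricts to an isomorphism $\Hom(T\times H,G)^\Gamma\xrightarrow{\cong}\mathrm{Im}(c^\Gamma)$. Composing with the identification $\Hom(T\times_\Gamma H,G)\cong\Hom(T\times H,G)^\Gamma$ from the previous paragraph presents $\Hom(T\times_\Gamma H,G)$ as the image of the closure operator $c^\Gamma$ on $\Hom_\Gamma(H,G^T)$, and applying the closure-operator-to-strong-deformation-retract principle once more --- now to $c^\Gamma$ on the finite poset $\Hom_\Gamma(H,G^T)$ --- gives the asserted inclusion of a strong deformation retract. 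The one genuinely delicate part will be bookkeeping the left/right actions and inverses so that $\Phi$ and $c$ come out honestly equivariant (not merely equivariant up to a twist) and so that taking $\Gamma$-fixed points commutes with both the closure construction and with $\Phi$; the topological input --- that closure operators yield strong deformation retractions --- is entirely standard.
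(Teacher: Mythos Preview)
Your argument is correct and matches the paper's proof essentially line for line: the paper defines your $\Phi$ (calling it $\varphi$) together with a retraction $\psi$ sending $\beta$ to $(t,h)\mapsto\pi_t(\beta(h))$, verifies $\psi\circ\varphi=\id$ and $\varphi\circ\psi\ge\id$ (your closure map $c$ is precisely $\varphi\circ\psi$), observes naturality to pass to $\Gamma$-fixed points, and then checks the isomorphism $\Hom_\Gamma(T\times H,G)\cong\Hom(T\times_\Gamma H,G)$ exactly as you do. Your framing in terms of an explicit closure operator is a cosmetic repackaging of the same content.
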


\begin{proof}
We recall the proof of $\Hom(T \times H, G) \rightarrow \Hom(H,G^T)$ from \cite{DocHom}.  Define a map of posets $\varphi: \Hom(T \times H, G) \rightarrow \Hom(H,G^T)$ by
\[\varphi(\alpha)(h) = \{f:V(T)\rightarrow V(G) : f(t) \in \alpha(t,h), \hspace{.1 in} \forall t \in T \},\]
\noindent for $\alpha \in \Hom(T \times H, G)$ and $h \in H$.

We next construct a map of posets $\psi: \Hom(H,G^T) \rightarrow \Hom(T \times H, G)$ according to
\[\psi(\beta)(t,h) = \{f(t): f \in \beta(h) \},\]
\noindent
for $\beta \in \Hom(H,G^T)$, and $(t,h) \in T \times H$.  In \cite{DocHom} it is shown that $\phi$ and $\psi$ are both well-defined and that $\psi \circ \varphi = \id$ and $\varphi \circ \psi \geq \id$.  Hence the map $\psi$ induces a homotopy equivalence.

Next we note that $\Gamma$ acts on $G^T$ and hence we have an action
on the poset $\Hom(H,G^T)$ as described in \prettyref{defn:Homaction},
where $\Hom_{\Gamma}(H,G^T)$ is the fixed point poset.  Also, $\Gamma$
acts on $T \times H$ and hence on $\Hom(T \times H, G)$
according to $(\gamma \cdot \alpha)(t,h) = \alpha ((t,h)\cdot
\gamma)$.  We let $\Hom_{\Gamma}(T \times H, G)$ denote the subposet
consisting of the fixed points of this action.  The construction of
$\varphi$ and $\psi$ are both natural with respect to the
automorphisms of $T$ and $H$, and hence are equivariant with respect
to the $\Gamma$ actions on the posets.  Hence the restriction of
$\varphi$ induces a homotopy equivalence on the fixed point sets
$\Hom_{\Gamma}(T \times H, G) \rightarrow \Hom_{\Gamma}(H,G^T)$.

Finally we claim that $\Hom_{\Gamma}(T \times H,G) \cong \Hom(T
\times_{\Gamma} H, G)$ (an isomorphism of posets), which would
complete the proof.  For this note that the quotient map
$p\colon T\times H\to (T\times H)/\Gamma=T\times_\Gamma H$
induces an injective poset map
$p^G:\Hom(T\times_{\Gamma} H, G) \rightarrow \Hom(T \times H, G)$.
The image
$\im(p^G)$ is contained in the subposet $\Hom_{\Gamma}(T \times H,G)$
since if $\alpha \in \Hom(T \times_{\Gamma} H, G)$ we have
$\bigl(\gamma \cdot (p^G(\alpha))\bigr)(t,h) = p^G(\alpha)\bigl((t,h)
\cdot \gamma)\bigr) = p^G(\alpha)(t \cdot \gamma^{-1}, \gamma \cdot h)
= p^G(\alpha)(t,h)$.  To show that $p^G$ is surjective onto
$\Hom_{\Gamma}(T \times H,G)$ suppose $\alpha \in \Hom_{\Gamma}(T
\times H, G)$ and let $\tilde \alpha([(t,h)]) \deq \alpha(t,h)$.
This is well-defined, since if $[(t,h)]=[(t',h')]$ then there is a
$\gamma\in\Gamma$ such that $(t,h)\cdot\gamma=(t',h')$
and therefore $\alpha(t',h')$ = $(\gamma\cdot\alpha)(t,h)=\alpha(t,h)$,
the latter equality since $\alpha$
is in the fixed point set.
Also $\tilde\alpha$ is an element of $\Hom(T \times_{\Gamma} H, G)$,
since if $[(t,h)]
\sim [(t^\prime, h^\prime)]$ we have by definition of a quotient graph
$(t,h)\cdot \gamma \sim
(t^\prime, h^\prime) \cdot \gamma^\prime$ for some $\gamma,
\gamma^\prime \in \Gamma$.
It follows that
$\tilde\alpha([t,h])\times\tilde\alpha([t',h'])
=\alpha((t,h)\cdot\gamma)\times\alpha((t',h')\cdot\gamma')
\subset E(G)$, and
$\tilde \alpha$ is a multihomomorphism with $p^G(\tilde \alpha) = \alpha$.
\end{proof}

\begin{prop} \label{prop:EquiPosetHom}
Suppose $G$ is a graph with a left $\Gamma$-action and $P$ is a poset with a left $\Gamma$-action, for some group $\Gamma$.  Then there is a closure map on the level of posets that induces a homotopy equivalence
\begin{center}
$\Hom_{\Gamma}\bigl(P^1, G\bigr) \simeq \Poset_{\Gamma}\bigl(P, \Hom({\bf 1},G)\bigr)$.
\end{center}
\end{prop}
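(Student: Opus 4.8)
The plan is to make the identification explicit on the level of posets by matching the data on both sides and then checking that the natural comparison maps are adjoint and equivariant. First I would spell out what the two sides are. An element of $\Poset_{\Gamma}(P, \Hom(\ug,G))$ is an equivariant order-preserving map $g$ that assigns to each $p \in P$ a nonempty subset $g(p) \subseteq V(G)$ consisting of looped vertices (since $\Hom(\ug,G)$ is exactly the poset of nonempty sets of looped vertices of $G$, with inclusion), subject to monotonicity $p \leq p' \Rightarrow g(p) \subseteq g(p')$ and equivariance $g(\gamma p) = \gamma \cdot g(p)$. On the other side, an element of $\Hom_{\Gamma}(P^1, G)$ is an equivariant multihomomorphism $\alpha$ assigning to each atom $x$ of $P$ a nonempty subset $\alpha(x) \subseteq V(G)$ so that $x \sim y$ in $P^1$ forces $\alpha(x) \times \alpha(y) \subseteq E(G)$; recall $x \sim y$ iff there is $z \in P$ with $z \geq x$ and $z \geq y$ (in particular every atom is self-adjacent, so each $\alpha(x)$ is a set of looped vertices).

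The comparison maps I would use are the evident restriction and "join" operations. Define $\rho\colon \Poset_{\Gamma}(P,\Hom(\ug,G)) \to \Hom_{\Gamma}(P^1,G)$ by restricting $g$ to the atoms of $P$; this is well-defined because if $x \sim y$ via a common upper bound $z$, then $g(x) \cup g(y) \subseteq g(z)$ and $g(z)$ is a set of mutually adjacent looped vertices of $G$ (an element of $\Hom(\ug,G)$ evaluated at a single point is a clique of looped vertices), so $\alpha(x)\times\alpha(y)\subseteq E(G)$. In the other direction define $\sigma\colon \Hom_{\Gamma}(P^1,G) \to \Poset_{\Gamma}(P,\Hom(\ug,G))$ by
\[\sigma(\alpha)(p) \deq \bigcap_{x \leq p,\ x \text{ an atom}} \alpha(x)\]
when $p$ dominates at least one atom (which holds for all $p$ in a poset where every element lies above an atom, e.g. a face poset), together with a convention for minimal-but-not-atom elements if needed. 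One checks $\sigma(\alpha)(p)$ is nonempty: any two atoms below $p$ are adjacent in $P^1$, so their $\alpha$-values pairwise connect, and a standard argument (or the fact that $\sigma(\alpha)(p)$ should instead be taken as a suitable \emph{union} rather than intersection) produces a nonempty clique. I expect the cleaner choice is in fact $\sigma(\alpha)(p) \deq \bigcup_{x\leq p} \alpha(x)$, which is automatically nonempty and monotone, and which lies in $\Hom(\ug,G)$ precisely because all atoms below $p$ are pairwise $P^1$-adjacent; I would settle this point first, as it determines the rest. Both $\rho$ and $\sigma$ are order-preserving and $\Gamma$-equivariant directly from the definitions.

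Then I would verify that $\rho$ and $\sigma$ exhibit a closure map: show $\rho \circ \sigma = \id$ on $\Hom_{\Gamma}(P^1,G)$ (restricting the union back to a single atom $x$ recovers $\alpha(x)$, provided $x$ is the unique atom below itself — true — and using that $\alpha$ is monotone-free as a multihom this is immediate), and show $\sigma \circ \rho \geq \id$ (or $\leq \id$, depending on union vs.\ intersection) on $\Poset_{\Gamma}(P,\Hom(\ug,G))$, i.e.\ $g(p) \subseteq \bigcup_{x \leq p} g(x)$ fails in general but $g(p)\supseteq \bigcup_{x\le p} g(x)$ holds by monotonicity — so with the union convention $\sigma\circ\rho \leq \id$, making $\sigma\circ\rho$ a closure operator (interior operator) whose image is the set of "atom-determined" maps, and this comparison is through poset maps, hence induces the claimed homotopy equivalence on order complexes (cf.\ the standard fact that an interior/closure operator on a poset gives a strong deformation retract, as already used in \prettyref{lemma:EquiAdjoint}). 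The equivariance of all maps is inherited pointwise, so everything restricts to the $\Gamma$-fixed subposets without extra work.

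The main obstacle I anticipate is pinning down the correct formula for $\sigma$ and the direction of the order comparison $\sigma\circ\rho$ vs.\ $\id$ so that it genuinely gives a closure (or interior) operator with the right image — in particular checking that $\sigma(\alpha)(p)$ always lands in $\Hom(\ug,G)$, which hinges on the combinatorial fact that atoms sharing a common upper bound are pairwise adjacent in $P^1$ but atoms below a \emph{common} $p$ need all be pairwise adjacent, which is exactly the $P^1$-adjacency condition and so works. A secondary technical point is the treatment of elements of $P$ that are themselves atoms or that are minimal without being atoms (relevant only if $P$ is not required to have every element above an atom); I would either assume $P$ has this property in the intended applications (face posets do) or handle the degenerate elements by a direct convention. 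Once the operator is correctly set up, the homotopy equivalence and its equivariance are formal.
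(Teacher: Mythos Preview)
Your proposal is correct and, once you commit to the union formula $\sigma(\alpha)(p)=\bigcup_{x\le p,\,x\text{ atom}}\alpha(x)$, it is exactly the paper's argument: the paper's $\varphi$ is your $\sigma$, the paper's $\psi$ is your restriction $\rho$, and the relations $\psi\circ\varphi=\id$, $\varphi\circ\psi\le\id$ are precisely the ones you identify. Your side concern about elements of $P$ with no atoms below them is legitimate and is silently assumed away in the paper (it holds in all the intended applications, where $P$ is a face poset).
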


\begin{proof}
We first note that $\Gamma$ acts on $\Poset\bigl(P, \Hom({\bf 1}, G)\big )$ according to $(\gamma \cdot f)(x)({\bf 1}) = \big \{ \gamma \cdot y : y \in (f(\gamma^{-1} \cdot x))({\bf 1}) \big \}$.  The poset $\Poset_{\Gamma}\bigl(P, \Hom({\bf 1}, G) \bigr)$ is the subposet corresponding to the fixed point set of this action.  As in the proof of \prettyref{lemma:EquiAdjoint}, $\Gamma$ acts on $\Hom\bigl(P^1, G \bigr)$ according to $(\gamma \cdot \alpha)(x) = \big \{\gamma \cdot y: y \in \alpha(\gamma^{-1} \cdot x) \big \}$ for $\gamma \in \Gamma$, $\alpha \in \Hom \bigl(P^1, G \bigr)$, and $x \in P^1$.   By definition (see \prettyref{defn:Homaction}) $\Hom_{\Gamma}\bigl(P^1, G \bigr)$ is the fixed point set.

We define a map of posets $\varphi:\Hom \bigl(P^1, G \bigr)
\rightarrow \Poset \bigl(P, \Hom({\bf 1}, G)\bigr)$, according to
\[\varphi(\alpha)(x)({\bf 1}) = \bigcup_{y \leq x, \textrm{ $y$ an atom}} \alpha(y),\]
\noindent for $x \in P$, $\alpha \in \Hom \bigl(P^1,G\bigr)$, and $x
\in P$.

To check that $\varphi(\alpha)(x) \in \Hom({\bf 1}, G)$ let $z \in \alpha(y)$ and $z^\prime \in \alpha(y^\prime)$
where $y, y^\prime \leq x $ are atoms.  Then $y \sim y^\prime$ in
$P^1$ and hence $z \sim z^\prime$ in $G$ as desired.  We see that $\varphi(\alpha)$ is a poset map since if $x \leq x^\prime$ in $P$,
then $y \leq x^\prime$ for every atom $y$ with $y \leq x$, and hence
$\varphi(\alpha)(x) \leq\varphi(\alpha)(x^\prime)$.  To see that $\varphi$ itself is a poset map we suppose
$\alpha \leq \alpha^\prime$ in $\Hom\bigl(P^1, G\bigr)$.  Since $\alpha \leq \alpha^\prime$, we have $\alpha(y) \subset
\alpha^\prime(y)$ for any $y \in P^1$.  Hence we have
\[\varphi(\alpha)(x) = \bigcup_{y \leq x} \alpha(y) \subset
\bigcup_{y \leq x} \alpha^\prime(y) = \varphi(\alpha^\prime)(x),\]
\noindent
and we conclude $\varphi(\alpha) \leq \varphi(\alpha^\prime)$.

Next we define a map $\psi: \Poset \bigl(P, \Hom({\bf 1}, G)\bigr) \rightarrow \Hom(P^1, G)$ according to $\psi(f)(x) = f(x)({\bf 1})$, for $f \in \Poset \bigl(P, \Hom({\bf 1}, G \bigr)$ and $x \in P^1$.  It is clear that $\psi \circ \varphi = \id$, and also $\varphi \circ \psi \leq \id$ since
\[\varphi \circ \psi (f)(x)({\bf 1}) = \bigcup_{y \leq x, \textrm{ $y$ an atom}} \psi(f)(y) = \bigcup_{y \leq x, \textrm{ $y$ an atom}} f(y)({\bf 1}),\]
\noindent
and $f(y)({\bf 1}) \subset f(x)({\bf 1})$ for all $y \leq x$.

The maps $\varphi$ and $\psi$ are natural with respect to automorphisms of $P$ and $G$, and hence are equivariant with respect to the $\Gamma$ actions described above.  The  restriction of $\varphi$ provides the desired homotopy equivalence.
\end{proof}

\begin{lemma} \label{lemma:closureclosure}
Suppose $Q$ is a poset, and $c:Q \rightarrow Q$ is a closure map
(say with $c(q) \geq q$ for all $q \in Q$).  Then given a poset $P$,
the induced maps $c_*: \Poset(P,Q) \rightarrow \Poset(P,Q)$ and
$c^*:\Poset(Q,P) \rightarrow \Poset(Q,P)$ are both closure maps.
\end{lemma}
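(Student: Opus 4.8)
The plan is a direct, formal verification. Recall that a \emph{closure map} on a poset is an order-preserving self-map that is idempotent and satisfies $c(q)\ge q$ for all $q$; these are the only properties of $c$ we shall use, together with the fact that the orders on $\Poset(P,Q)$ and $\Poset(Q,P)$ are pointwise. First I would record that composing an order-preserving map with $c$ (on either side) again yields an order-preserving map, so that $c_*\colon f\mapsto c\circ f$ and $c^*\colon g\mapsto g\circ c$ are well-defined self-maps of the respective poset-hom sets. Then I would check monotonicity, idempotence, and extensivity for each in turn.

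For $c_*\colon\Poset(P,Q)\to\Poset(P,Q)$: if $f\le g$, then $f(x)\le g(x)$ for all $x\in P$, hence $c(f(x))\le c(g(x))$ since $c$ is monotone, so $c_*(f)\le c_*(g)$. Idempotence is immediate from $c\circ c=c$, namely $c_*(c_*(f))=c\circ c\circ f=c\circ f=c_*(f)$. Extensivity holds because $c_*(f)(x)=c(f(x))\ge f(x)$ for every $x$, so $c_*(f)\ge f$. This settles the $c_*$ case with no real difficulty.

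For $c^*\colon\Poset(Q,P)\to\Poset(Q,P)$: monotonicity follows from $g\le h\Rightarrow g(c(q))\le h(c(q))$, and idempotence from $c^*(c^*(g))=g\circ c\circ c=g\circ c=c^*(g)$. The one point that needs a word is extensivity: we must see $c^*(g)\ge g$, i.e.\ $g(c(q))\ge g(q)$ for every $q\in Q$. This is the place where the argument for precomposition is \emph{not} parallel to that for postcomposition — precomposing does not obviously raise values — but it goes through because $c(q)\ge q$ and $g$ is itself order-preserving, so $g(c(q))\ge g(q)$. Thus I expect extensivity of $c^*$ to be the only step requiring a moment's thought; everything else is formal bookkeeping, and the conclusion is that both $c_*$ and $c^*$ are closure maps.
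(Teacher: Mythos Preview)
Your proof is correct and follows essentially the same approach as the paper: a direct pointwise verification, with the one nontrivial step being exactly the one you flag, namely $c^*(g)(q)=g(c(q))\ge g(q)$ because $g$ is order-preserving and $c(q)\ge q$. The paper's proof is terser---it only writes out the extensivity checks $c_*(\phi)\ge\phi$ and $c^*(\psi)\ge\psi$ and leaves monotonicity and idempotence implicit---so your version is, if anything, more complete.
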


\begin{proof}
For the first case, suppose $\phi \in \Poset(P,Q)$.  Then we have
$c_*(\phi)(p) = c(\phi(p)) \geq \phi(p)$ since $c$ is a closure map.
Hence $c_*(\phi) \geq \phi$.

For the other case, suppose $\psi \in \Poset(Q,P)$.  Then we have
$c^*(\psi)(q) = \psi(c(q)) \geq \psi(q)$ since $\psi$ is a poset map
and $c(q) \geq q$.  Hence $c^*(\psi) \geq \psi$.
\end{proof}

We can now string together these results to provide the proof of \prettyref{thm:firstentry}.

\begin{proof}[Proof of \prettyref{thm:firstentry}]
For $\Gamma, P$, and $T$ and $G$ as above we have homotopy equivalences:
\begin{align*}
\Hom(T \times_{\Gamma} P^1, G) &\simeq \Hom_{\Gamma}(P^1, G^T) \hspace{.2 in}
&&(\textrm{by \prettyref{lemma:EquiAdjoint}})\\
&\simeq \Poset_{\Gamma}(P, \Hom({\bf 1}, G^T)) \hspace{.2 in}
&&(\textrm{by \prettyref{prop:EquiPosetHom}} )\\
&\simeq \Poset_{\Gamma}(P, \Hom(T,G)) \hspace{.2 in}
&&(\textrm{by \prettyref{lemma:closureclosure}}).
\end{align*}
\end{proof}

\prettyref{thm:firstentry} shows that the functor $T \times_{\Gamma} (\qm)^1$ from the category of posets to graphs provides an `approximate' left adjoint to the functor $\Hom(T,\qm)$.  See the last section of the paper for further discussion regarding the categorical context of this statement.

\begin{rem}\label{rem:graphversion}
If we have $P \deq \Hom({\bf 1}, B^A)$ for graphs $A$ and $B$, we see that $P$ is the face poset of the clique complex (on the looped vertices) of the graph~$B^A$.  In this case $P^1 = S(B^A)$, where $S$ is the functor that takes the induced subgraph on looped vertices.  From \cite{DocHom} we have $\Hom({\bf 1}, B^A) \simeq \Hom(A,B)$ and we obtain:
\begin{align*}
\Poset_{\Gamma}\bigl(\Hom(A,B),\Hom(T,G)\bigr) &\simeq
\Poset_{\Gamma}\bigl(\Hom({\bf 1}, B^A), \Hom(T,G)\bigr)
&&\text{ (by \prettyref{lemma:closureclosure})} \\
&\simeq \Hom \bigl((\Hom({\bf 1}, B^A))^1 \times_{\Gamma} T,
G\bigr)
&&\text{(by \prettyref{thm:firstentry})}\\
&= \Hom\bigl(S(B^A) \times_{\Gamma} T, G\bigr).
\end{align*}
\end{rem}

\subsection{Proofs of \prettyref{thm:secondentry} and related lemmas}
We next turn to the proof of \prettyref{thm:secondentry}.  Once again this amounts to establishing some auxiliary results which we combine in the proof at the end of the section.  We begin by establishing conditions on a group action on a poset which guarantees that
realization commutes with taking quotients.  The following definition
is taken from \cite{BKgroup}.
\begin{defn}\label{def:regular}
Suppose a group~$\Gamma$ acts freely on a poset~$P$.  We call the
action \emph{strongly regular} if for all $u,v,m\in P$ and
$\gamma \in\Gamma$ such that $u,v\le m$ and $\gamma \cdot u=v$ we have $\gamma =e$.
\end{defn}
The definition given in \cite{BKgroup} (property SRP) also applies to
non-free actions, although this requires a slightly different formulation.
In addition, the authors of \cite{BKgroup} consider the more general situation of group actions on small categories.  We are only interested in the special case of free actions on posets, and for the reader's convenience provide a proof of the following lemma.
\begin{lem}\label{lem:regular}
Let $P$ be a finite poset on which the group~$\Gamma$ acts freely and
strongly regularly.  Then the map $P\to P/\Gamma$ induces a
homeomorphism $\real P/\Gamma\xto\homeo\real{P/\Gamma}$.
\end{lem}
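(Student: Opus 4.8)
The plan is to show that the tautological continuous surjection induced by the quotient poset map $\pi\colon P\to P/\Gamma$ is a bijection, and then invoke compactness. First I would record the easy points. The relation on $P/\Gamma$ is a partial order: reflexivity and transitivity are immediate, and antisymmetry follows from strong regularity — if $\gamma u\le m$ and $\delta m'\le u$ with $m'$ in the orbit of $m$, then tracing through one finds a common upper bound $m$ with two elements of one $\Gamma$-orbit below it, forcing the connecting group element to be $e$ and hence $[u]=[m]$. (This is the content of \cite{BKgroup}.) Since $\pi$ is order preserving it induces a simplicial map $\real\pi\colon\real P\to\real{P/\Gamma}$ on order complexes; this map is constant on $\Gamma$-orbits (because $\pi(\gamma x)=\pi(x)$), so it factors through a continuous map $\bar q\colon\real P/\Gamma\to\real{P/\Gamma}$. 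It is surjective because every chain of $P/\Gamma$ lifts to a chain of $P$ (lift one vertex at a time, translating the next chosen representative into position by an element of $\Gamma$). As $P$ is finite, $\real P$ is compact, hence so is $\real P/\Gamma$; and $\real{P/\Gamma}$ is the realization of a finite simplicial complex, hence Hausdorff. A continuous bijection from a compact space to a Hausdorff space is a homeomorphism, so everything reduces to the injectivity of $\bar q$.

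For injectivity I would first observe that $\pi$ restricts to an injection on every chain of $P$: if $x<y$ and $\gamma x=y$, then $u=x$, $v=y$, $m=y$ contradict strong regularity unless $\gamma=e$, i.e.\ $x=y$. Consequently $\real\pi$ maps the open simplex of $\real P$ spanned by a chain $c\colon x_1<\dots<x_n$ affinely and homeomorphically onto the open simplex of $\real{P/\Gamma}$ spanned by $\pi(c)\colon[x_1]<\dots<[x_n]$, preserving barycentric coordinates. Now suppose $p,p'\in\real P$ have $\bar q([p])=\bar q([p'])$; write $p$ (resp.\ $p'$) in the interior of the simplex of a chain $c\colon x_1<\dots<x_n$ (resp.\ $c'\colon x_1'<\dots<x_{n'}'$). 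Since a point of $\real{P/\Gamma}$ lies in the interior of a unique simplex, we must have $\pi(c)=\pi(c')$; thus $n=n'$, $[x_i]=[x_i']$ for all $i$, and $p$ and $p'$ have the same barycentric coordinates.

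It remains to replace the pointwise relations $[x_i]=[x_i']$ by one global group element. Choose $\gamma_i\in\Gamma$ with $\gamma_i x_i=x_i'$. For $1\le i<n$, applying $\gamma_{i+1}$ to $x_i<x_{i+1}$ gives $\gamma_{i+1}x_i<x_{i+1}'$, while also $x_i'<x_{i+1}'$ and $x_i'=(\gamma_i\gamma_{i+1}^{-1})(\gamma_{i+1}x_i)$; strong regularity applied with $u=\gamma_{i+1}x_i$, $v=x_i'$, $m=x_{i+1}'$ yields $\gamma_i\gamma_{i+1}^{-1}=e$, i.e.\ $\gamma_i=\gamma_{i+1}$. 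Hence $\gamma:=\gamma_1=\dots=\gamma_n$ satisfies $\gamma\cdot c=c'$, and since $p$ and $p'$ carry the same barycentric coordinates this gives $\gamma\cdot p=p'$, so $[p]=[p']$ in $\real P/\Gamma$. This proves $\bar q$ injective and completes the argument.

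The step I expect to be the genuine obstacle is the last one, the uniformization of the $\gamma_i$ into a single $\gamma$: this is the only place where the hypothesis is used in its full strength, and it is essential — for a merely free (not strongly regular) action the quotient $\real P/\Gamma$ can carry strictly more topology than $\real{P/\Gamma}$, so the whole weight of \prettyref{def:regular} is concentrated in this \emph{consecutive-elements} comparison under the common upper bound $x_{i+1}'$.
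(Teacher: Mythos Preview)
Your argument is correct and follows essentially the same route as the paper's proof: both reduce to showing that $\real\pi$ is injective on each simplex and that the preimage of a simplex of $\real{P/\Gamma}$ is a single $\Gamma$-orbit of simplices, the latter via the same inductive alignment of group elements along a chain using strong regularity, and then conclude by compactness. The only cosmetic difference is that for injectivity of $\pi$ on chains the paper invokes finiteness (if $\gamma q\le q$ then $\gamma q=q$) rather than strong regularity as you do.
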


\begin{proof}
We denote the quotient map by $\pi\colon P\to P/\Gamma$.  Let $p,q\in
P$, $p<q$.  We show that $\pi(p)<\pi(q)$.  By definition of $P/\Gamma$
we have $\pi(p)\le\pi(q)$.  Now if also $\pi(q)\le\pi(p)$ then there
is $\gamma \in\Gamma$ such that $\gamma \cdot q\le p\le q$.  But since $P$ is
finite and multiplication by~$\gamma$ an isomorphism $\gamma\cdot q\le q$
implies $\gamma\cdot q=q$ and hence $p=q$, contradicting $p<q$.  Therefore
$\pi$ maps each chain in $P$ injectively to $P/\Gamma$ and the
restriction of the map $\real\pi\colon\real P\to\real{P/\Gamma}$ to
each simplex is injective.  To show that the map $\real
P/\Gamma\to\real{P/\Gamma}$ is bijective and hence (by compactness) a
homeomorphism it therefore suffices to show that the preimage of each
$k$-simplex of $\real{P/\Gamma}$ consists of exactly one orbit of
$k$-simplices of $\real P$.  Let $\pi(p_0)<\pi(p_1)<\dots<\pi(p_k)$ be
such a simplex of~$P$.  Since the action of $\Gamma$ on $P$ is
strongly regular and free, there is a unique $\gamma_{k-1}\in\Gamma$ such
that $\gamma_{k-1}\cdot p_{k-1}<p_k$.  Repeating this argument, we obtain unique $\gamma_0,\dots,\gamma_{k-1}\in\Gamma$ such that $\gamma_0\cdot p_0<\gamma_1\cdot
p_1<\dots<\gamma_{k-1}\cdot p_{k-1}<p_k$.  The chains of~$P$ which are
mapped to $\set{\pi(p_0),\pi(p_1),\dots,\pi(p_k)}$ are therefore
exactly those of the form $\set{\eta \gamma_0\cdot p_0,\eta \gamma_1\cdot
p_1,\dots,\eta \gamma_{k-1}\cdot p_{k-1},\eta \cdot p_k}$ for $\eta\in\Gamma$.  This
concludes the proof.
\end{proof}

\begin{prop}
\label{prop:bundle0}
Let $T$, $G$ be graphs and $\Gamma$ a group acting on~$G$.
Let $p\colon G \rightarrow G/\Gamma$ denote the projection.  Assume
that $T$ is finite, connected, has at least one edge, and that a spanning tree~$S$ of~$T$ has been chosen.  Each edge of~$T$ which is not in~$S$ defines a cycle in~$T$ consisting of that edge and a simple path in~$S$.
Let $L$ be the set of the lengths of these cycles.
Assume that for all $v\in V(G)$, $\gamma \in\Gamma$ and $l\in L\unite\set{4}$
such that there is a (not necessarily simple)
path of length~$l$ in~$G$ from~$v$ to~$\gamma \cdot v$ we have $\gamma =e$.  Then the induced action of~$\Gamma$
on~$\Hom(T,G)$ is strongly regular and free, the map of posets $p_T:\Hom(T,G) \rightarrow \Hom(T,G/\Gamma)$ is rank preserving, and the induced map
\[\bar p_T\colon\Hom(T,G)/\Gamma\to\Hom(T,G/\Gamma)\]
is an isomorphism.
\end{prop}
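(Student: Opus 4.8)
The plan is to prove, in this order, that (a) the $\Gamma$-action on $\Hom(T,G)$ is free and strongly regular, (b) $p_T$ is rank preserving, and (c) $\bar p_T$ is an isomorphism of posets. Here $\Gamma$ acts on $\Hom(T,G)$ by $(\gamma\cdot\alpha)(t)=\gamma\cdot\alpha(t)$ and $p_T$ sends $\alpha$ to $t\mapsto p(\alpha(t))$; both are immediately well defined. Everything rests on one observation, the case $l=4$ of the hypothesis, which I will use repeatedly: if $v,v'\in V(G)$, $v'=\gamma\cdot v$, and $v,v'$ have a common neighbour $z$ in $G$, then $\gamma=e$ (and $v=v'$), because $v\sim z\sim v\sim z\sim v'$ is a walk of length $4$ from $v$ to $\gamma\cdot v$. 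Note that any vertex $v$ occurring in a value $\alpha(t)$ of a multihomomorphism $\alpha\in\Hom(T,G)$ has a neighbour in $G$ (pick a neighbour $t'$ of $t$ in the connected graph $T$ and any $z\in\alpha(t')$), hence trivial $\Gamma$-stabiliser, so phrases like ``the unique $\gamma$ with $v'=\gamma v$'' are meaningful. Given this, (a) is immediate: if $\alpha,\beta\le\mu$ and $\gamma\cdot\alpha=\beta$, pick $v\in\alpha(t_0)\subseteq\mu(t_0)$ and $z\in\mu(t_0')$ for a neighbour $t_0'$ of $t_0$; then $v$ and $\gamma\cdot v\in\beta(t_0)\subseteq\mu(t_0)$ share the neighbour $z$, so $\gamma=e$ (the case $\mu=\alpha=\beta$ gives freeness; the general case is strong regularity in the sense of \prettyref{def:regular}). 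And (b) follows since if $v,v'\in\alpha(t)$ have $p(v)=p(v')$ then, picking a neighbour $t'$ of $t$ and $z\in\alpha(t')$, $v$ and $v'$ share the neighbour $z$, so $v=v'$; hence $p$ is injective on each $\alpha(t)$ and $\rank(p_T\alpha)=\sum_t(\lvert\alpha(t)\rvert-1)=\rank(\alpha)$.

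For (c) I will show (I) $p_T$ is surjective and (II) $p_T\alpha\le p_T\beta$ implies $\gamma\cdot\alpha\le\beta$ for some $\gamma\in\Gamma$; these suffice. Indeed $\bar p_T$ is visibly order preserving, surjective by (I), and by (II) one has $[\alpha]\le[\beta]$ \emph{iff} $p_T\alpha\le p_T\beta$, so $\bar p_T$ is order reflecting; finiteness of $\Gamma$ together with (a) show the quotient order on $\Hom(T,G)/\Gamma$ is antisymmetric, so an order-reflecting order-preserving surjection is a poset isomorphism. To prove (II): for $v\in\alpha(t)$ let $\hat v\in\beta(t)$ be the unique vertex with $p(\hat v)=p(v)$ (unique by (b)) and write $\hat v=\mu_t(v)\cdot v$ with $\mu_t(v)\in\Gamma$. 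For an edge $t\sim t'$ of $T$, $v\in\alpha(t)$, $z\in\alpha(t')$, we have $v\sim z$ and $\hat v\sim\hat z$; rewriting the latter as $v\sim\mu_t(v)^{-1}\mu_{t'}(z)\cdot z$ and applying the $l=4$ observation to $z$ and $\mu_t(v)^{-1}\mu_{t'}(z)\cdot z$ (common neighbour $v$) gives $\mu_t(v)=\mu_{t'}(z)$. Thus $\mu_t(v)$ does not depend on $v$ or, along edges, on $t$; since $T$ is connected it equals a single $\gamma$, and then $\gamma\cdot\alpha(t)\subseteq\beta(t)$ for all $t$.

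It remains to prove (I), which is the heart of the matter. Fix $\bar\alpha\in\Hom(T,G/\Gamma)$ and a spanning tree $S$ of $T$, rooted at a vertex $t_*$; I construct a lift $\alpha$ by recursion down $S$. At the root: choose a child $t_1$ of $t_*$, a vertex $\bar x\in\bar\alpha(t_1)$ and a lift $x$ of it, and for each $\bar u\in\bar\alpha(t_*)$ choose a lift $u$ with $u\sim x$ (possible as $\bar u\sim\bar x$); set $\alpha(t_*)$ to be the set of these lifts, which maps bijectively to $\bar\alpha(t_*)$ and has $x$ as a common neighbour. At a non-root $t$ with parent $t'$, assuming $\alpha(t')$ has been defined, maps bijectively to $\bar\alpha(t')$, and has a common neighbour $y$: for each $\bar w\in\bar\alpha(t)$ choose a lift $w$ adjacent to \emph{all} of $\alpha(t')$. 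Such $w$ exists and is unique: take a lift adjacent to one $u_0\in\alpha(t')$; any competing lift $\delta\cdot w$ adjacent to some $u\in\alpha(t')$ gives a length-$4$ walk $w\sim u_0\sim y\sim u\sim\delta\cdot w$, so $\delta=e$. Put $\alpha(t)$ equal to these lifts; it maps bijectively to $\bar\alpha(t)$, is completely adjacent to $\alpha(t')$, and has any vertex of $\alpha(t')$ as a common neighbour, so the recursion proceeds. This yields $\alpha$ on all of $V(T)$ with $p(\alpha(t))=\bar\alpha(t)$ for every $t$ and $\alpha(a)\times\alpha(b)\subseteq E(G)$ for every edge $\{a,b\}$ of $S$.

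To finish (I) I must check the non-tree edges. Let $e=\{a,b\}\notin S$, with simple $S$-path $a=s_0,s_1,\dots,s_m=b$, so $e$ closes a cycle of length $m+1\in L$. Given $v\in\alpha(a)$, $w\in\alpha(b)$, choose $v_i\in\alpha(s_i)$ for $1\le i\le m-1$, getting a walk $v=v_0\sim v_1\sim\dots\sim v_{m-1}\sim w$ of length $m$ (consecutive terms are adjacent since the $\{s_i,s_{i+1}\}$ are tree edges). Since $\bar\alpha$ is a multihomomorphism and $a\sim b$, $p(v)\sim p(w)$; lifting this edge gives $w'$ with $p(w')=p(w)$ and $v\sim w'$, say $w'=\gamma\cdot w$. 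Then $\gamma\cdot w\sim v_0\sim v_1\sim\dots\sim v_{m-1}\sim w$ is a walk of length $m+1\in L$ from $\gamma\cdot w$ to $w=\gamma^{-1}\cdot(\gamma\cdot w)$, so the hypothesis forces $\gamma=e$, i.e.\ $v\sim w$. (A loop of $T$ at $a$ is the case $m=0$; then $1\in L$, which first forces each vertex of $\alpha(a)$ to be looped in $G$, and that loop pads the resulting length-$3$ walk to length $4$.) Hence $\alpha\in\Hom(T,G)$ and $p_T\alpha=\bar\alpha$, which proves (I) and with it the proposition. The main obstacle is this last stretch: getting the spanning-tree recursion to cohere — notably feeding the root a common neighbour by hand — and then disposing of the non-tree edges, which is the sole point where one needs the hypothesis at all the cycle lengths in $L$ rather than only at $4$.
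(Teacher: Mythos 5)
Your proof is correct. It reuses the same core observations as the paper's proof — the length-$4$ hypothesis for rank preservation, freeness and strong regularity, and the spanning tree plus cycle lengths for lifting — but organizes the proof of the isomorphism differently. The paper establishes a \emph{unique lifting} statement for $p_T$ (first lifting a rank-$0$ subelement $\beta_0\le\beta$ along $S$ and checking non-tree edges, then extending one vertex at a time through higher ranks, each step forced by the length-$4$ hypothesis), from which bijectivity of $\bar p_T$ follows, and then separately shows the lift respects order via the easy observation that $\alpha'(w)\deq\alpha(w)\cap p^{-1}[\beta'(w)]$ defines a lift of $\beta'\le p_T\alpha$ lying below $\alpha$. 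You instead decompose the isomorphism claim into surjectivity of $p_T$ and order-reflection of $\bar p_T$: for surjectivity you construct the full lift on the spanning tree in one pass (maintaining the invariant that each $\alpha(t)$ has a common neighbor, a point the paper finesses implicitly through the rank induction) and then dispose of non-tree edges by the same walk-around-the-cycle argument; for order-reflection, your step (II) — showing that $p_T\alpha\le p_T\beta$ forces $\gamma\alpha\le\beta$ for a single $\gamma\in\Gamma$, using the common-neighbor/length-$4$ trick to propagate the correcting group element along edges of $T$ — replaces the paper's unique-lifting argument. Your route is arguably more streamlined: it avoids the two-level induction (on the rank of $\beta'$ nested inside the tree induction) and makes the underlying covering-type structure explicit in the clean order-reflection step. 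The paper's route has the advantage of exhibiting the explicit compatibility between lifts at neighboring ranks, which is closer to the "unique path lifting" intuition. The parenthetical about the looped case in (I) is terse but correct once unpacked: the length-$1$ path forces loops on every vertex of $\alpha(a)$, and the loop then pads a length-$3$ walk $\gamma w\sim v\sim y\sim w$ (with $y$ the maintained common neighbor) to the needed length $4$.
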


\begin{rem}
We have $1\in L$ if and only if $T$ has a looped vertex.  We never
have $2\in L$.
\end{rem}

\begin{proof}
Let $\alpha\in\Hom(T, G)$, $u\in V(T)$, $v\in V(G)$, $\gamma\in\Gamma$, and
$v,\gamma v\in\alpha(u)$.  Since there is an edge incident with~$u$, it
follows that there is a path of length~$2$, and hence also one of
length~$4$, from $v$ to $\gamma v$.  Thus $\gamma =e$, which shows that the map
$\Hom(T,G)\to\Hom(T,G/\Gamma)$ is rank preserving and that the action
of $\Gamma$ on $\Hom(T, G)$ is strongly regular and free.

Now let $\beta\in\Hom(T, G/\Gamma)$, $u\in V(T)$, $v\in V(G)$, and
$p(v)\in\beta(u)$.
We will show that $\beta$ can be lifted uniquely starting in~$v$,
i.e.\ that there is
a unique $\alpha\in\Hom(T, G)$ with $\beta=p_T(\alpha)$ and
$v\in\alpha(u)$.  This will immediately imply that $\bar p_T$ is
a bijection.

We first show that there is a~$\beta_0\le\beta$ that lifts uniquely.
Choose any $\beta_0\le\beta$ of rank~$0$, i.e. let $\beta_0$ be a
graph homomorphism.  By the same argument as in the first paragraph, edges
lift uniquely with the lift of one of the vertices prescribed.  By
induction, the restriction of~$\beta_0$ to the tree~$S$ has a unique
lift~$\alpha_0$.  This is a lift of $\beta_0$, because if
$(w,w')\in E(T)\wo E(S)$, then there must be a $\gamma \in\Gamma$ such that
$(\alpha_0(w),\gamma \cdot\alpha_0(w'))\in E(G)$.  But now there is an $l\in
L$ and a path $w=w_1,w_2,\ldots,w_l=w'$ in~$T$, so
$\gamma \cdot\alpha_0(w_l),\alpha_0(w_1),\alpha_0(w_2),\ldots,\alpha_0(w_l)$
is a path of length~$l$ in~$G$ and we must have $\gamma =e$.  Next we show
that if some $\beta'<\beta$ has a unique lift, there is a $\beta''$
with $\beta'<\beta''\le\beta$ which also has unique lift.  Let
$\alpha'$ be the unique lift of $\beta'$, and $w\in V(T)$,
$z\in\beta(w)\wo\beta'(w)$.  Let $n\in V(T)$ be a neighbor of~$w$ and
$\tilde x\in\alpha'(n)$.  Since $(p(\tilde x),z)\in E(G/\Gamma)$,
there is a unique $\tilde z\in p^{-1}[\set z]$ with $(\tilde x, \tilde
z)\in E(G)$, and any lift $\alpha''$ of $\beta''$ which extends
$\alpha'$ must satisfy $\alpha''(w)=\alpha'(w)\unite\set{\tilde z}$
and agree with $\alpha'$ on the other vertices of~$T$.  We will show
that the function $\alpha''$ thus defined is in~$\Hom(T, G)$.  First
note that if $w$ is looped then so is~$z$ and hence also~$\tilde z$,
because otherwise there would be a path of length~$1$ between
different vertices of $p^{-1}[\set z]$.  Now let $n'$ be a neighbor
of~$w$, $\tilde x'\in\alpha'(n')$.  Also choose $\tilde
z'\in\alpha'(w)$.  Since $p(\tilde z)\in\beta'(w)$, $p(\tilde
x')\in\beta'(n')$, there is a $\gamma \in\Gamma$ such that $(\tilde
z,\gamma\cdot\tilde x')\in E(G)$.  Now $\tilde x',\tilde z',\tilde x,\tilde
z,\gamma\cdot\tilde x'$ is a path of length~$4$ and hence $\gamma =e$, which
means that $\tilde x'$ is a neighbor of~$\tilde z$ as required.

Finally, if $\alpha\in\Hom(T, G)$ and
$\beta'\le p_T(\alpha)$, then
$\alpha'(w)\deq\alpha(w)\intersect p^{-1}[\beta'(w)]$ defines a lift
of~$\beta'$ and $\alpha'\le\alpha$.  This shows that the poset
map~$\phi$ is not only a bijection, but in fact an isomorphism.
\end{proof}

\begin{cor}\label{cor:bundle}
Let $T$ be a finite tree without loops, and suppose $G$ is a graph with a $\Z_2$-action with the property that for all $v \in G$, there is no path of length four from $v$ to $\gamma v$, its image under the action.  Then the induced action of~$\Z_2$ on~$\Hom(T,G)$ is strongly regular and free, the map of posets $p_T\colon\Hom(T,G) \rightarrow \Hom(T,G/\Z_2)$ is rank preserving, and the induced map
\[\bar p_T\colon\Hom(T,G)/\Z_2\to\Hom(T,G/\Z_2)\]
is an isomorphism.
\qed
\end{cor}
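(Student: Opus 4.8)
The plan is to deduce this as the special case $\Gamma=\Z_2$ of \prettyref{prop:bundle0}; the only real content is checking that the hypotheses of that proposition are met under the weaker-looking assumptions made here. So first I would take the spanning tree to be $S\deq T$ itself, which is permissible because a finite tree is connected and is its own unique spanning tree. (Here one should assume $T$ has at least one edge: for the one-vertex tree the induced $\Z_2$-action on $\Hom(T,G)$ need not be free, so that degenerate case is simply excluded, exactly as in \prettyref{prop:bundle0}.)

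With $S=T$ there are no edges of $T$ outside the spanning tree, so the collection of cycles considered in \prettyref{prop:bundle0} is empty and the associated set of lengths $L$ is empty; thus $L\union\set4=\set4$. Writing $\Z_2=\set{e,\gamma}$, the arithmetic hypothesis of \prettyref{prop:bundle0} then says that for every $v\in V(G)$ and every $l\in\set4$, the existence of a path of length $l$ in $G$ from $v$ to the image of $v$ under a group element forces that element to be $e$. For the element $e$ this is vacuous, so the condition reduces precisely to: there is no path of length four in $G$ from any $v$ to $\gamma\cdot v$. This is exactly the hypothesis of the corollary, and $T$ is finite and loopless as \prettyref{prop:bundle0} requires.

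Having matched all the hypotheses, \prettyref{prop:bundle0} applies and gives verbatim that the induced $\Z_2$-action on $\Hom(T,G)$ is free and strongly regular, that $p_T\colon\Hom(T,G)\to\Hom(T,G/\Z_2)$ is rank preserving, and that $\bar p_T$ is a poset isomorphism. I do not expect any genuine obstacle here; the only point needing (minimal) care is the observation that a tree, being acyclic, contributes no lengths to $L$, so that the general condition of \prettyref{prop:bundle0} collapses to a statement about paths of length four alone.
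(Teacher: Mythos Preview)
Your proposal is correct and is exactly what the paper intends: the corollary carries only a \qed, meaning it is meant to follow immediately from \prettyref{prop:bundle0}, and you have spelled out precisely how (take $S=T$, so $L=\emptyset$ and the hypothesis collapses to the length-four condition). Your remark about needing at least one edge is also well taken, since \prettyref{prop:bundle0} explicitly assumes this.
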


\begin{lemma}
\label{lem:bundle0'}
Let $T$, $G$, and $H$ be graphs and suppose $\Gamma$ is a group acting from the right on~$G$ and from the left on~$H$,
and hence on $G\times H$ by $\gamma \cdot(u,v) \deq (u \gamma^{-1},\gamma v)$.  Then there is
a homotopy equivalence of posets
\[
\Hom(T, G\times H)/\Gamma\homot\Hom(T, G)\times_\Gamma\Hom(T, H)
\]
natural with respect to automorphisms of~$T$ and
$\Gamma$-automorphisms of $G$ and~$H$.  Also, if the $\Gamma$-action
on $\Hom(T, G\times H)$ is strongly regular and free, then so is the
$\Gamma$-action on~$\Hom(T, G)\times\Hom(T, H)$.
\end{lemma}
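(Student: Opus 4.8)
The plan is to exhibit a $\Gamma$-equivariant deformation retraction of $\Hom(T,G\times H)$ onto a copy of $\Hom(T,G)\times\Hom(T,H)$ that sits inside it as the image of a closure operator, and then to deduce both assertions formally by passing to $\Gamma$-quotients. First I would write down the two maps realizing the retraction. In one direction, sending a pair $(\beta,\delta)\in\Hom(T,G)\times\Hom(T,H)$ to the multihomomorphism $i(\beta,\delta)$ with $i(\beta,\delta)(t)\deq\beta(t)\times\delta(t)$ gives an injective poset map $i\colon\Hom(T,G)\times\Hom(T,H)\to\Hom(T,G\times H)$; well-definedness is immediate, since $\beta$ and $\delta$ being multihomomorphisms forces every pair consisting of a vertex of $\beta(t)$ and a vertex of $\beta(t')$ (resp.\ of $\delta(t)$ and $\delta(t')$) to be an edge whenever $t\sim t'$. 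In the other direction the two coordinate projections assemble into a poset map $\pi=(\pi_G,\pi_H)\colon\Hom(T,G\times H)\to\Hom(T,G)\times\Hom(T,H)$, where $\pi_G(\alpha)(t)\deq\set{u\colon (u,v)\in\alpha(t)\text{ for some }v}$ and symmetrically for $\pi_H$; these land in $\Hom(T,G)$, resp.\ $\Hom(T,H)$, because an edge of $G\times H$ projects to an edge in each coordinate. A short unwinding of definitions shows $\pi\circ i=\id$ and that $r\deq i\circ\pi$, given by $r(\alpha)(t)=\pi_G(\alpha)(t)\times\pi_H(\alpha)(t)\supseteq\alpha(t)$, is a closure operator on $\Hom(T,G\times H)$ whose image is $i\bigl(\Hom(T,G)\times\Hom(T,H)\bigr)$.

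Next I would observe that $i$, $\pi$ and $r$ are all given by formulas functorial in $T$, $G$ and $H$, hence natural with respect to automorphisms; in particular $i$ and $\pi$ are equivariant for the relevant $\Gamma$-actions (the left action on $\Hom(T,G\times H)$ induced by the diagonal left action on $G\times H$, and the diagonal action on $\Hom(T,G)\times\Hom(T,H)$ built from the induced right action on $\Hom(T,G)$ and left action on $\Hom(T,H)$, whose orbit space is precisely $\Hom(T,G)\times_\Gamma\Hom(T,H)$). Passing to quotients, $i$, $\pi$ and $r$ descend to order-preserving maps $\bar i$, $\bar\pi$, $\bar r$ of the quotients, with $\bar\pi\circ\bar i=\id$ and $\bar i\circ\bar\pi=\bar r$. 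Since $r(\alpha)\ge\alpha$ for all $\alpha$, we get $\bar r\ge\id$ on $\Hom(T,G\times H)/\Gamma$, so $\real{\bar r}$ is homotopic to the identity; combined with $\bar\pi\circ\bar i=\id$ this exhibits $\real{\bar\pi}$ and $\real{\bar i}$ as mutually inverse homotopy equivalences between $\real{\Hom(T,G\times H)/\Gamma}$ and $\real{\Hom(T,G)\times_\Gamma\Hom(T,H)}$, with naturality inherited from that of $i$ and $\pi$. This gives the first assertion.

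For the second assertion the retraction is not needed; it is enough that $i$ is a $\Gamma$-equivariant, order-preserving map into $\Hom(T,G\times H)$. Freeness transfers: if $\gamma\cdot x=x$ for some $x\in\Hom(T,G)\times\Hom(T,H)$, then $\gamma\cdot i(x)=i(\gamma\cdot x)=i(x)$, so $\gamma=e$ by freeness of the action on $\Hom(T,G\times H)$. Strong regularity transfers: if $x_1,x_2\le m$ with $\gamma\cdot x_1=x_2$, then $i(x_1),i(x_2)\le i(m)$ and $\gamma\cdot i(x_1)=i(x_2)$, so $\gamma=e$ by strong regularity of the action on $\Hom(T,G\times H)$. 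The main obstacle — really the only point requiring care — is the passage to quotients in the first part: one must confirm that the inequality $\bar r\ge\id$ genuinely survives in the quotient (it does, directly from $r\ge\id$, independently of whether the quotients are posets or merely preorders), since it is exactly this that keeps the closure-operator homotopy argument valid after quotienting; granting that, the homotopy equivalence follows from $\bar\pi\bar i=\id$ together with $\bar i\bar\pi\simeq\id$.
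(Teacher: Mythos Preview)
Your proposal is correct and follows essentially the same approach as the paper: the paper defines $\rho(\alpha,\beta)(v)=\alpha(v)\times\beta(v)$ (your~$i$) and $\varphi=(p^1_T,p^2_T)$ (your~$\pi$), observes $\varphi\circ\rho=\id$ and $\rho\circ\varphi\ge\id$, and invokes naturality to obtain the equivariant homotopy equivalence and hence the equivalence on quotients; for the second claim the paper simply notes that $\rho$ is injective, which is exactly the mechanism you spell out for transferring freeness and strong regularity. Your treatment is more explicit about the passage to quotients (the point you flag as the ``main obstacle'') than the paper, which dispatches it in a single clause, but the underlying argument is identical.
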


\begin{proof}
We recall the proof of $\Hom(T, G\times H)\homot\Hom(T,
G)\times\Hom(T, H)$ from \cite{DocHom}.  Let $p^1\colon G\times H\to G$, $p^2\colon
G\times H\to H$ denote the projections.  Let
\begin{align*}
\varphi\deq(p^1_T,p^2_T)\colon
\Hom(T, G\times H)&\to\Hom(T,G)\times\Hom(T, H)
\intertext{and}
\rho\colon\Hom(T,G)\times\Hom(T, H)&\to\Hom(T, G\times H)\\
(\alpha,\beta)&\mapsto (v\mapsto \alpha(v)\times \beta(v)).
\end{align*}
Then $\varphi\cmps\rho=\id$, $\rho\cmps\varphi\ge\id$, and since the
maps are natural with respect to automorphisms of the graphs,
$\varphi$~induces the desired homotopy equivalence.  Also, $\rho$ is
injective, and so the last statement follows.
\end{proof}

\begin{proof}[Proof of \prettyref{thm:secondentry}]
If, for some~$r$, the action on $P^1$ is $r$-discontinuous then so is
the diagonal action on $T\times P^1$.  Therefore by
\prettyref{prop:bundle0} we have \[\Hom(G, T \times_{\Gamma} P^1)\isom \Hom(G,
T \times P^1) / \Gamma\] and the action on $\Hom(G, T\times P^1)$ is
strongly regular and free.  By \prettyref{lem:bundle0'}, \[\Hom(G, T
\times P^1) / \Gamma \homot \Hom(G, T) \times_{\Gamma} \Hom(G, P^1),\]
with the quotient on the right hand side obtained from an action that is strongly regular
and free.  Consequently we get
\begin{align*}
\real{\Hom(G, T \times_{\Gamma} P^1)} &\homeo \real{\Hom(G, T \times P^1) / \Gamma}\\
&\homot \real{\Hom(G, T) \times_{\Gamma} \Hom(G, P^1)}\\
&\homeo \real{\Hom(G, T)} \times_{\Gamma}\real{\Hom(G, P^1)},
\end{align*}
the last homeomorphism by \prettyref{lem:regular}.
\end{proof}

\section{Proof of \prettyref{thm:univintro} and $S_n$-universality}
\label{sec:univ}

In this section we turn to the proof of \prettyref{thm:univintro}
(restated in its more precise form as \prettyref{thm:univn}), which
says that any free $S_n$ space $X$ can be approximated up to
$S_n$-homotopy type as $\Hom(K_n, G)$ for some suitably chosen
loopless graph~$G$,
where $S_n$ acts on this complex via $S_n\isom\Aut(K_n)$.  The graph
$G$ will be constructed by taking the face poset of $\tilde X$, a
sufficiently large subdivision of $X$, and then applying the
construction $G \deq K_n \times_{S_n} \tilde X^1$.  The result then
follows from
\[\Hom(K_n, K_n \times_{S_n} \tilde X^1) \simeq_{S_n} \Hom(K_n, \tilde X^1) \simeq_{S_n} \Hom({\bf 1}, \tilde X^1) \approx X,\]
the first homotopy equivalence coming from \prettyref{thm:secondentry}
and the second from \prettyref{cor:Kn1}.  The details are below, but
we point out that the $S_n$-action on $\Hom(K_n, \tilde X^1)$
in this chain of equivalences will use
the actions of $S_n$ on both $K_n$ and~$X$.   Seen as an $S_n$-space via
the $S_n\isom\Aut(K_n)$ action, the complex $\Hom(K_n, \tilde X^1)$ is $S_n$-homotopy
equivalent to $X$ with the trivial $S_n$ action, as we will see in \prettyref{cor:Kn1}.

Universality results of this kind are also found in \cite{Cs05} and \cite{Docuni}.  In \cite{Cs05}, Csorba establishes the case $n=2$ of our result: given a simplicial complex $X$ with a free ${\mathbb Z}_2$-action, there exists a graph~$G_X$ and a (simple) ${\mathbb Z}_2$-homotopy equivalence $\Hom(K_2,G_X) \simeq_{{\mathbb Z}_2} X$ (see also \cite{Ziv05} for an independent proof of this result).  Csorba's construction of $G_X$ involves taking the 1-skeleton of the barycentric subdivision of $X$ and adding additional edges between each vertex $v$ and the neighbors of $\gamma v$ (for $\gamma \in {\mathbb Z}_2$ the nonidentity element).  One can check that the resulting graph is isomorphic to the graph $K_2 \times_{{\mathbb Z}_2} (\Chain FX)^1$, and in \prettyref{thm:univ2} we show that this subdivision suffices to establish the $n = 2$ case of the Theorem.

We begin by establishing \prettyref{thm:GP}, which states that for certain
graphs $T$ and~$G$ the homotopy type of $\real{\Hom(T, G)}$ does not
change if loops are added to all vertices of~$T$.  Since
multi-homomorphisms send looped vertices to cliques of looped
vertices, we will have to produce such cliques near given sets of
vertices.  We start with some notation.

\begin{nota}
For a graph~$G$ and $M\subset V(G)$ we set
\[\nu(M)\deq\set{v\in V(G)\colon\text{$(u,v)\in E(G)$ for all $u\in M$}}.\]
\end{nota}

\begin{rem}
We have $\nu M\supset N$ if and only if $(u,v)\in E(G)$ for all $u\in
M$ and $v\in N$, so this relation is symmetric in $M$ and $N$.
In particular $\nu^2M=\nu(\nu M)\supset M$, since $\nu M\supset\nu M$.
\end{rem}

We collect some properties of~$\nu$ that we will need.  Note that in particular
it follows from \prettyref{it:nu1} below that $\nu M\intersect\nu^2 M$ is a
clique of looped vertices.

\begin{lem}\label{lem:nu}
Let $G$ be a graph and $M,N\subset V(G)$.
\begin{enumerate}
\item\label{it:nua}
If $\nu M\supset M$ then
$M\subset \nu M\intersect\nu^2 M$.
\item\label{it:nu1}
If $N\subset M\subset V(G)$ then $\nu(\nu
M\intersect\nu^2 M)\supset \nu N\intersect\nu^2 N$.
\item\label{it:nu2}
If $M\subset\nu N$ then
$\nu(\nu M\intersect\nu^2 M)\supset\nu N\intersect\nu^2 N$.
\item\label{it:nu3}
If $\nu M\supset N$ then
$\nu(\nu M\intersect\nu^2 M)\supset N$.
\end{enumerate}
\end{lem}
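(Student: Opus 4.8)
The plan is to distill the proof to three elementary features of the operator $\nu$ and then read off all four parts. First, $\nu$ is inclusion-reversing: if $M\subset N$ then $\nu N\subset\nu M$, immediately from the definition. Second, as recorded in the Remark preceding the lemma, the relation ``$\nu M\supset N$'' is symmetric in $M$ and~$N$ (so checking $\nu A\supset B$ is the same as checking $(u,v)\in E(G)$ for all $u\in A$, $v\in B$), and consequently $\nu^2 M\supset M$ for every~$M$. Applying this last fact to $\nu M$ in place of $M$ and combining with inclusion-reversal yields, in one line, the idempotence $\nu^3 M=\nu M$. These are the only ingredients I would need.

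Part~(i) is then immediate: $\nu^2 M\supset M$ always, and $\nu M\supset M$ by hypothesis, so $M\subset\nu M\intersect\nu^2 M$.

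For (ii), (iii), (iv) the uniform observation is that, since $\nu M\intersect\nu^2 M$ is contained in both $\nu M$ and $\nu^2 M$, inclusion-reversal gives
\[\nu(\nu M\intersect\nu^2 M)\ \supset\ \nu(\nu M)\cup\nu(\nu^2 M)\ =\ \nu^2 M\cup\nu M,\]
where the equality uses $\nu^3=\nu$. It then suffices to check in each case that the claimed subset lies inside $\nu M\cup\nu^2 M$. In (ii), $N\subset M$ forces $\nu^2 N\subset\nu^2 M$ (as $\nu^2$ is inclusion-preserving), so $\nu N\intersect\nu^2 N\subset\nu^2 M$. In (iii), the hypothesis $M\subset\nu N$ is equivalent by symmetry to $N\subset\nu M$, whence $\nu^2 N\subset\nu^2(\nu M)=\nu^3 M=\nu M$, and again $\nu N\intersect\nu^2 N\subset\nu M$. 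In (iv), $\nu M\supset N$ says directly $N\subset\nu M$. Alternatively one can argue pairwise: to get $\nu(\nu M\intersect\nu^2 M)\supset X$ it is enough that every $w\in X$ be adjacent to every $z\in\nu M\intersect\nu^2 M$, and in each case the hypothesis lets one place $z$ in $\nu N$ (or, for (iv), place $w$ in $\nu M$) so that the membership $w\in\nu^2 N=\nu(\nu N)$ (respectively $z\in\nu^2 M=\nu(\nu M)$) produces the edge; I would keep whichever version reads more cleanly.

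I do not expect a genuine obstacle; the proof is bookkeeping. The one point requiring a moment's care is that in $\nu M\intersect\nu^2 M$ one factor expands and the other contracts when $M$ is shrunk to a subset or replaced by $\nu M$, so one must be deliberate about which of $\nu M$, $\nu^2 M$ to invoke in each of (ii)–(iv). The only non-tautological ingredient, $\nu^3=\nu$, is itself a two-line consequence of the facts already noted in the Remark.
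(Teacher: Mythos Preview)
Your proof is correct and follows essentially the same route as the paper. The paper argues each of (ii)--(iv) as a single chain of inclusions (e.g.\ $\nu(\nu M\cap\nu^2 M)\supset\nu^2 M\supset\nu^2 N\supset\nu N\cap\nu^2 N$ for~(ii)), while you first isolate the common step $\nu(\nu M\cap\nu^2 M)\supset\nu M\cup\nu^2 M$ and then check that the target lies in one of the two pieces; the underlying inclusions and the use of $\nu^3=\nu$ are identical.
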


\begin{proof}\
\begin{enumerate}
\item This follows since $M\subset\nu^2 M$.
\item
$\nu(\nu M\intersect\nu^2 M)\supset\nu^2M\supset \nu^2N\supset
\nu N\intersect\nu^2 N$.
\item
$\nu(\nu M\intersect\nu^2 M)\supset\nu^3M=\nu M\supset\nu^2 N
\supset\nu N\intersect\nu^2 N$.
\item
$\nu(\nu M\intersect\nu^2 M)\supset\nu^3M=\nu M\supset N$.
\qed
\end{enumerate}\let\qedsymbol\relax
\end{proof}

\begin{defn}
We call a graph~$G$ \emph{fine} if for every
$M\subset V(G)$ with $M,\nu M\ne\emptyset$
we have $\nu M\intersect\nu^2 M\ne\emptyset$.
\end{defn}

The following key idea is from \cite[Section~7]{Ziv05}.

\begin{prop}\label{prop:GPfine}
Let $P$ be the proper part of a finite lattice and $G=\Chain(P)^1$ its comparability graph.
Then $G$ is fine.
\end{prop}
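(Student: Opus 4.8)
The plan is to translate the statement into lattice language and then exhibit an explicit element of $\nu M\cap\nu^2 M$. First I would fix the dictionary. Identifying the vertex set of $G=\Chain(P)^1$ with $P$ itself (the atoms of $\Chain P$ are the singleton chains), two vertices are adjacent precisely when the corresponding elements of $P$ are comparable, and every vertex carries a loop; hence for $M\subseteq P$ the set $\nu M$ is exactly $\{p\in P\colon p\text{ is comparable to every element of }M\}$. Let $L=P\cup\{\hat 0,\hat 1\}$ be the finite lattice whose proper part is $P$, so that $p\in P$ iff $\hat 0<p<\hat 1$ and all finite meets and joins are available in~$L$. It then suffices to prove: if $M\neq\emptyset$ and $N\deq\nu M\neq\emptyset$, there is $z\in P$ comparable to every element of $M\cup N$ — indeed such a $z$ lies in $\nu M\cap\nu N=\nu M\cap\nu^2 M$.

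The one auxiliary fact I would isolate is elementary: if $\emptyset\neq S\subseteq L$ and $y\in L$ is comparable to every element of $S$, then $y$ is comparable to $\bigvee S$ and to $\bigwedge S$. For the join: either some $s\in S$ has $s\geq y$, so $\bigvee S\geq y$; or every $s\in S$ has $s\leq y$, so $\bigvee S\leq y$. The meet case is dual.

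For the construction I would pick any $n_0\in N$ and use it to truncate a join or a meet. By definition of $N$, $n_0$ is comparable to every element of $M$, so $M=M^{\leq}\cup M^{\geq}$ where $M^{\leq}=\{m\in M\colon m\leq n_0\}$ and $M^{\geq}=\{m\in M\colon m\geq n_0\}$. Set
\[
z\deq\begin{cases}\ds\bigvee M^{\leq},&M^{\leq}\neq\emptyset,\\[2mm]\ds\bigwedge M,&M^{\leq}=\emptyset.\end{cases}
\]
In the first case $z\geq m$ for $m\in M^{\leq}$ while $z\leq n_0\leq m$ for $m\in M^{\geq}$, so $z$ is comparable to all of $M$; since $z$ dominates an element of $P$ and is dominated by $n_0\in P$ we get $\hat 0<z<\hat 1$, i.e.\ $z\in P$; and as every element of $M^{\leq}$ is comparable to each $n\in N$, the auxiliary fact (with $S=M^{\leq}$, $y=n$) shows $z$ is comparable to $n$. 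In the second case every $m\in M$ satisfies $m>n_0$, so $n_0$ is a lower bound of the nonempty set $M$; then $\hat 0<n_0\leq z=\bigwedge M\leq m<\hat 1$ for $m\in M$ gives $z\in P$, $z$ is below (hence comparable to) every element of $M$, and the dual of the auxiliary fact (with $S=M$, $y=n$) gives $z$ comparable to each $n\in N$. Either way $z\in\nu M\cap\nu^2 M$, so this set is nonempty and $G$ is fine.

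I do not expect a genuine obstacle, but the one point that needs care — and which is the idea alluded to just above, taken from \cite{Ziv05} — is the choice of $z$: the naive candidates $\bigvee M$ and $\bigwedge M$ need not lie in $P$, as they may collapse to $\hat 1$ or $\hat 0$, and it is precisely the hypothesis $\nu M\neq\emptyset$ that supplies a witness $n_0$ allowing one to truncate so as to stay inside $P$ without losing comparability with $M\cup N$. Everything else is routine bookkeeping with the comparability order.
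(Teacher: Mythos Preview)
Your proof is correct. The translation of $\nu$ to the comparability relation, the auxiliary fact about joins and meets, and the case split on whether $M^{\le}$ is empty all go through as you describe; in each case the element $z$ you build lies in $P$ and is comparable to everything in $M\cup\nu M$.

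Your route, however, is genuinely different from the paper's. The paper argues by induction on $|M|$ that $\nu M$ (computed in the bounded lattice $L$) is a union of intervals $[a_1,b_1]\cup\cdots\cup[a_k,b_k]$ with $a_1\le b_1\le a_2\le\cdots\le b_k$, and then observes that the endpoints $\{a_1,b_1,\ldots,a_k,b_k\}\setminus\{\hat 0,\hat 1\}$ lie in $\nu M\cap\nu^2 M$. So the paper obtains a structural description of $\nu M$ and reads off many witnesses at once, whereas you bypass this description entirely and construct a single explicit witness $z$ as a join or meet of (part of) $M$, using a chosen element $n_0\in\nu M$ to ensure $z$ stays strictly between $\hat 0$ and $\hat 1$. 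Your argument is more direct and avoids the induction; the paper's gives more information about $\nu M$ than is strictly needed here.
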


\begin{proof}
Let $M$ be a non-empty subset of~$P$, regarded as a set of vertices of~$G$.
Induction on the cardinality of $M$ shows that there are $k\ge 0$ and
$a_1\le b_1\le a_2\le b_2\le\cdots\le a_k\le b_k$, allowing
$a_1=\hat0$ and $b_k=\hat 1$, such that
$\nu M=([a_1,b_1]\unite\cdots\unite[a_k,b_k])\wo\set{\hat0,\hat1}$.
It follows that
$\set{a_1,b_1,\dots,a_k,b_k}\wo\set{\hat0,\hat1}\subset\nu M\intersect\nu^2 M$.
\end{proof}

\begin{thm}\label{thm:GP}
Let $G$ be a fine graph and $T$ a graph without isolated
vertices.  Denote by $\looped T$ the reflexive graph obtained from $T$
by adding loops to all vertices.
Then the inclusion map
\begin{equation*}
i\colon\Hom(\looped T, G)\to\Hom(T, G)
\end{equation*}
induced by the inclusion map $T\to\looped T$ induces a homotopy equivalence
of spaces with the homotopy inverse and the homotopies natural in $T$ and
with respect to automorphisms of~$G$.
\end{thm}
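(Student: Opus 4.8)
The plan is to realize $\Hom(\looped T, G)$ as the subposet of $\Hom(T,G)$ consisting of those $\alpha$ for which every $\alpha(v)$ is a clique of looped vertices --- equivalently, $\alpha(v)\subseteq\nu\alpha(v)$ for all $v$ --- and then to prove that the inclusion $i$ is a homotopy equivalence in three steps: first a single closure operator retracts both $\Hom(T,G)$ and $\Hom(\looped T, G)$ onto ``$\nu^2$-closed'' subposets; then naturality of such retractions reduces the statement to the inclusion of the smaller closed subposet into the larger; and finally Quillen's fiber lemma handles that inclusion, the fiber contractions being assembled from \prettyref{lem:nu}. Two observations are used throughout. Since $T$ has no isolated vertices, for every $\alpha\in\Hom(T,G)$ and every vertex $v$ we have $\nu\alpha(v)\ne\emptyset$ (choose a neighbor $w$ of $v$; then $\emptyset\ne\alpha(w)\subseteq\nu\alpha(v)$), so fineness of $G$ gives $\nu\alpha(v)\intersect\nu^2\alpha(v)\ne\emptyset$. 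And the operator $M\mapsto\nu^2 M$ is monotone, expanding, idempotent (as $\nu^3=\nu$), and carries a clique of looped vertices to a clique of looped vertices (from $M\subseteq\nu M$ one gets $\nu^2M\subseteq\nu M=\nu(\nu^2M)$).

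\emph{Step 1.} Setting $c(\alpha)(v)\deq\nu^2\alpha(v)$ defines a closure operator on $\Hom(T,G)$: it is monotone and expanding, and it lands in $\Hom(T,G)$ because for $v\sim w$ the inclusion $\alpha(w)\subseteq\nu\alpha(v)$ forces $\nu^2\alpha(w)\subseteq\nu^3\alpha(v)=\nu\alpha(v)$. Since $\nu^2$ preserves the clique-of-looped-vertices property, $c$ restricts to a closure operator on $\Hom(\looped T, G)$. Write $Q$, $Q'$ for the respective images; then $Q'=Q\intersect\Hom(\looped T, G)$, and the inclusions $Q\hookrightarrow\Hom(T,G)$ and $Q'\hookrightarrow\Hom(\looped T, G)$ are homotopy equivalences with homotopy inverse $c$, natural in $T$ and with respect to $\Aut(G)$.

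\emph{Step 2.} Let $c'$ be the restriction of $c$ to $\Hom(\looped T, G)$ and $j\colon Q'\hookrightarrow Q$ the restricted inclusion. Then $c\circ i=j\circ c'$ (both send $\beta$ to $\nu^2\beta$), and since $c,c'$ are homotopy equivalences, $i$ is a homotopy equivalence if and only if $j$ is.

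\emph{Step 3.} For $\alpha\in Q$ set $F_\alpha\deq\set{\beta\in Q'\colon\beta\le\alpha}$; by Quillen's fiber lemma it suffices to show that every $F_\alpha$ is contractible. The multihomomorphism $\beta_\alpha$ with $\beta_\alpha(v)\deq\nu\alpha(v)\intersect\alpha(v)$ ($=\nu\alpha(v)\intersect\nu^2\alpha(v)$, as $\alpha$ is $\nu^2$-closed) lies in $F_\alpha$: its values are non-empty by fineness and are cliques of looped vertices, it is $\nu^2$-closed (a short computation gives $\nu^2(\nu M\intersect M)=\nu M\intersect M$ whenever $\nu^2M=M$), and it is a valid multihomomorphism by the parts of \prettyref{lem:nu} applied with $M=\alpha(w)$, $N=\alpha(v)$ for $v\sim w$. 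Next, $\beta\mapsto\beta''$ given by $\beta''(v)\deq\nu^2\bigl(\beta(v)\union\beta_\alpha(v)\bigr)$ is a closure operator on $F_\alpha$: that $\beta''$ is again $\nu^2$-closed, clique-of-looped-valued, a valid multihomomorphism and $\le\alpha$ reduces once more to \prettyref{lem:nu} together with the remark that every vertex of $\beta(v)$ is adjacent to every vertex of $\beta_\alpha(v)$, so that $\beta(v)\union\beta_\alpha(v)$ is again a clique of looped vertices; monotonicity, the inequality $\beta''\ge\beta$, and idempotence are then immediate. Its image is $\set{\beta\in F_\alpha\colon\beta\ge\beta_\alpha}$, which has $\beta_\alpha$ as least element and is therefore contractible; hence so is $F_\alpha$.

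The step I expect to be the real obstacle is precisely the passage to $j\colon Q'\hookrightarrow Q$ and the contractibility of its fibers. The naive approach --- retract $\Hom(T,G)$ onto $\Hom(\looped T, G)$ directly by $\alpha\mapsto(\nu\alpha(v)\intersect\nu^2\alpha(v))_v$ --- fails, since this assignment is \emph{not} order-preserving (intersection is not monotone), so the usual order-homotopy argument does not apply; one must first pass to the $\nu^2$-closed subposet $Q$, which is exactly what makes $\nu\alpha(v)\intersect\alpha(v)$ non-empty (via fineness), and then run a \emph{second}, $\alpha$-dependent closure operator inside each fiber. Verifying the well-definedness claims against \prettyref{lem:nu} is fiddly but routine, and since $c$, $\beta\mapsto\beta''$ and $\alpha\mapsto\beta_\alpha$ are built solely from $\nu$ and the vertex--edge structure of $T$, naturality of the resulting homotopy inverse and homotopies in $T$ and under $\Aut(G)$ falls out of the construction.
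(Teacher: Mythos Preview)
Your approach is genuinely different from the paper's and is essentially correct as a proof that $|i|$ is a homotopy equivalence; the one soft spot is the naturality claim in Step~3.

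The paper does not pass to a $\nu^2$-closed subposet and does not invoke Quillen's fiber lemma. Instead it writes down a single explicit poset map
\[
j\colon\Chain\bigl(\Hom(T,G)\bigr)\to\Hom(\looped T,G),\qquad
(M^r_u)_{u,r}\longmapsto\Bigl(\textstyle\bigcup_r\nu M^r_u\cap\nu^2 M^r_u\Bigr)_u,
\]
and checks via \prettyref{lem:nu} that $j$ lands in $\Hom(\looped T,G)$ and that $j\circ\Chain(i)$ and $i\circ j$ are order-comparable to the respective identities through an auxiliary map~$h$. The obstacle you correctly isolated---that $\alpha\mapsto(\nu\alpha(v)\cap\nu^2\alpha(v))_v$ is not monotone---is precisely what the union over a chain repairs: enlarging the chain can only enlarge each union, so $j$ \emph{is} a poset map on $\Chain(\Hom(T,G))$. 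Because $j$, $h$, and all the order-comparisons are defined vertexwise from~$\nu$, naturality in $T$ and under $\Aut(G)$ is immediate.

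Your Steps~1 and~2 are correct and already produce explicit, natural retractions. Your Step~3 fiber analysis is correct and rather elegant---the two-layer closure (first $\nu^2$-close, then force $\beta\ge\beta_\alpha$) does show each $F_\alpha$ is contractible, so Quillen gives that $Q'\hookrightarrow Q$ is a homotopy equivalence. The issue is the last sentence: Quillen's Theorem~A, as usually stated, yields only an abstract homotopy inverse, and naturality of that inverse and of the homotopies does not simply ``fall out''. For equivariance under $\Aut(G)$ (or $\Aut(T)$) you can rescue this: since $\beta_\alpha$ and the closure $\beta\mapsto\beta''$ are equivariant, the same argument shows each fixed-point inclusion $(Q')^H\hookrightarrow Q^H$ has contractible fibers, and the equivariant Whitehead theorem then upgrades the map to a $\Gamma$-homotopy equivalence. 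Full functorial naturality in~$T$ (compatibility with arbitrary graph maps $T\to T'$, not just automorphisms) is not a group action and would need a further argument, e.g.\ a homotopy-colimit reading of Quillen. The paper's explicit $j$ on $\Chain(\Hom(T,G))$ sidesteps all of this.

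In summary: your route---closure operators plus Quillen fibers on the $\nu^2$-closed subposet---is conceptually cleaner and makes the role of fineness very transparent, while the paper's one-shot map on the order complex is less structural but delivers the naturality demanded by the statement with no extra work.
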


\begin{proof}
We define
\begin{align*}
j\colon\Chain \big(\Hom(T, G)\big)&\to\Hom(\looped T, G)\\
  (M_u^r)_{u\in V(T)}^{0\le r\le k}
&\mapsto\left(\Union_{r=0}^k(\nu M^r_u\intersect\nu^2 M^r_u)\right)_{u\in V(T).}
\end{align*}
To clarify the notation, for example the left hand side represents the chain
$\set{f_0,\dots,f_k}$ with $f_r(u)=M^r_u$, and we assume $f_0\le\dots\le f_k$.
We have to check that the image of~$j$ is actually in $\Hom(\looped T,
G)$.  Since $G$ is fine, we have $\nu M^r_u\intersect\nu^2
M^r_u\ne\emptyset$ for all $u$ and~$r$. Furthermore, $\nu(\nu
M^r_u\intersect\nu^2 M^r_u)\supset \nu M^s_v\intersect\nu^2 M^s_v$ for
all $0\le r,s\le k$ and $(u,v)\in E(\looped T)$.  For $(u,v)\in E(T)$
this follows from \ref{lem:nu}(\ref{it:nu2}), for $u=v$ it follows from
\ref{lem:nu}(\ref{it:nu1}).

We have $\big(j\cmps\Chain(i) \big)\left((M_u^r)_{u\in V(T)}^{0\le r\le k}\right)
\ge(\nu M_u^k\intersect\nu^2 M_u^k)_u\ge(M_u^k)_u$ by \ref{lem:nu}(\ref{it:nua}),
so $\real j\cmps\real i\homot\id_{\real{\Hom(\looped T, G_p)}}$.

To examine $i\cmps j$, we define an auxiliary map
\begin{align*}
h\colon\Chain \big(\Hom(T, G)\big)&\to\Hom(T, G),\\
  (M_u^r)_{u\in V(T)}^{0\le r\le k}
&\mapsto\left(M_u^k\unite\Union_{r=0}^k(\nu M^r_u\intersect\nu^2 M^r_u)\right)_{u\in V(T)}.
\end{align*}
We have to check $M^k_u\subset\nu(\nu M^r_v\intersect\nu^2 M^r_v)$ for
$(u,v)\in E(T)$.  This follows from \ref{lem:nu}(\ref{it:nu3}).  Since
$i\cmps j\le h$ and $h\left((M_u^r)_{u\in V(T)}^{0\le r\le
  k}\right)\ge(M_u^k)_u$, we have $\real i\cmps\real
j\homot\id_{\real{\Hom(T, G_P)}}$.

All maps have been natural in $T$ and with respect to automorphisms of~$G$.
\end{proof}

For the following, recall that a (finite) reflexive graph $H$ is called \emph{dismantlable} if there exists an ordering of the vertices $v_1, \dots, v_n$ such that for all $i<n$ we have $N_{H_i}(v_i) \subset N_{H_i}(v_{i+1})$, where $H_i$ is the subgraph of $H$ induced on the vertices $\{v_i, v_{i+1}, \dots, v_n\}$.  In \cite{DocHom} it is shown that if $H$ is dismantlable then for any graph $G$ the homomorphism $H \rightarrow {\bf 1}$ induces a homotopy equivalence $\Hom({\bf 1}, G) \rightarrow \Hom(H, G)$, with homotopy inverse induced by the graph homomorphism which sends ${\bf 1}$ to $v_n$.

\begin{cor}\label{cor:ldismantlable}
Let $G$ be a fine graph and $T$ a graph without isolated
vertices such that $\looped T$ is dismantlable.  Then the map
\begin{equation*}
\Hom(\ug, G)\to\Hom(T, G)
\end{equation*}
induced by the graph homomorphism $T\to\ug$ induces a homotopy
equivalence of spaces with the homotopy inverse and the homotopies
natural with respect to automorphisms of~$G$.
\end{cor}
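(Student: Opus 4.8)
The plan is to factor the map induced by the homomorphism $T\to\ug$ through $\Hom(\looped T,G)$ and to recognise the two resulting arrows as homotopy equivalences that are already available. Concretely, $T\to\ug$ is the composite $T\to\looped T\to\ug$, where the first arrow adds a loop to every vertex and the second is the unique homomorphism to the one-point looped graph. Since $\Hom(\qm,G)$ is contravariant in its first entry, this produces a factorization
\[\Hom(\ug,G)\longrightarrow\Hom(\looped T,G)\longrightarrow\Hom(T,G)\]
of the map in the statement, in which the second arrow is exactly the inclusion~$i$ of \prettyref{thm:GP} and the first arrow is induced by the homomorphism $\looped T\to\ug$.

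For the second arrow I would invoke \prettyref{thm:GP} directly: $G$ is fine and $T$ has no isolated vertices, so $\real i$ is a homotopy equivalence whose homotopy inverse and homotopies are natural with respect to automorphisms of~$G$. For the first arrow, $\looped T$ is a finite reflexive graph which is dismantlable by hypothesis, so the result of \cite{DocHom} recalled immediately before the corollary applies: the homomorphism $\looped T\to\ug$ induces a homotopy equivalence $\Hom(\ug,G)\to\Hom(\looped T,G)$, with homotopy inverse induced by the homomorphism $\ug\to\looped T$ sending the looped vertex to the final vertex~$v_n$ of a dismantling order. Every map here is induced by a graph homomorphism not involving~$G$, and the homotopies are the standard ones attached to comparable poset maps, hence natural with respect to automorphisms of~$G$.

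Composing the two homotopy equivalences shows that $\Hom(\ug,G)\to\Hom(T,G)$ is a homotopy equivalence; a homotopy inverse is the composite of the two homotopy inverses, the homotopies concatenate, and all of this remains natural with respect to automorphisms of~$G$. There is no genuinely difficult step here; the only point deserving care is to confirm that the $\Aut(G)$-naturality of the two homotopy equivalences being composed is mutually compatible, which is automatic because each map involved is induced by a $G$-independent graph homomorphism and each homotopy is the canonical one determined by an inequality of poset maps.
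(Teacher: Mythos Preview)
Your proof is correct and follows exactly the paper's approach: factor $T\to\ug$ as $T\to\looped T\to\ug$, then apply \prettyref{thm:GP} for the second arrow and the dismantlability result from \cite{DocHom} for the first. The paper's proof says precisely this in two sentences; your version simply expands the details.
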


\begin{proof}
The map $T\to\ug$ factorizes as $T\to\looped T\to\ug$.  Hence this follows from \prettyref{thm:GP} and the result from \cite{DocHom} mentioned above.
\end{proof}

\begin{cor}\label{cor:Kn1}
Let $G$ be a fine graph and $n\ge2$. Then the map
\begin{equation*}
\Hom(\ug, G)\to\Hom(K_n, G)
\end{equation*}
induced by the graph homomorphism $K_n\to\ug$ induces a homotopy
equivalence of $S_n$-spaces, where the action on $\Hom(\ug, G)$ is
trivial, with the homotopy inverse and the homotopies natural
with respect to automorphisms of~$G$.
\end{cor}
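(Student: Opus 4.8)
The plan is to factor the graph homomorphism $K_n\to\ug$ as the composite $K_n\hookrightarrow\looped{K_n}\to\ug$, where $\looped{K_n}$ is the complete graph on $n$ vertices with a loop at every vertex, and to analyse the two induced maps on $\Hom$ complexes separately. Both of these graph homomorphisms are $S_n$-equivariant once $\ug$ is given the trivial $S_n$-action and we recall that $S_n=\Aut(K_n)$ acts on $\looped{K_n}$ as $\Aut(\looped{K_n})$. Hence
\[\Hom(\ug,G)\xrightarrow{\ \iota\ }\Hom(\looped{K_n},G)\xrightarrow{\ i\ }\Hom(K_n,G)\]
is a composition of $S_n$-equivariant maps whose source carries the trivial action, and it suffices to show that each factor is an $S_n$-homotopy equivalence with homotopy inverse and homotopies natural in~$G$.

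For $i\colon\Hom(\looped{K_n},G)\to\Hom(K_n,G)$, I would invoke \prettyref{thm:GP} with $T=K_n$ (which has no isolated vertices since $n\ge2$). The decisive feature is that the homotopy equivalence of \prettyref{thm:GP}, together with its homotopy inverse and the homotopies, is natural in~$T$; specializing this naturality to the automorphisms of $K_n$ (which act as automorphisms of $\looped{K_n}$) shows that $i$ is an $S_n$-homotopy equivalence, and the naturality with respect to automorphisms of~$G$ asserted there is kept verbatim.

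For $\iota\colon\Hom(\ug,G)\to\Hom(\looped{K_n},G)$ (induced by the collapse $\looped{K_n}\to\ug$), the one subtle point — and the place where one cannot simply quote \prettyref{cor:ldismantlable} or the dismantlability result of \cite{DocHom} — is $S_n$-equivariance: $\looped{K_n}$ is dismantlable via any ordering of its vertices, but the homotopy inverse coming from dismantlability is evaluation at the chosen last vertex, which is not $S_n$-equivariant. Instead I would build a symmetric retraction by hand. Since $\looped{K_n}$ is reflexive and complete, any multihomomorphism $\alpha\colon\looped{K_n}\to G$ has $\alpha(i)\times\alpha(j)\subset E(G)$ for all $i,j$ (including $i=j$), so $C_\alpha\deq\bigcup_{i=1}^n\alpha(i)$ is a clique of looped vertices of~$G$, i.e.\ an element of $\Hom(\ug,G)$. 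One then checks that $r(\alpha)\deq C_\alpha$ is an order-preserving, $S_n$-invariant map $r\colon\Hom(\looped{K_n},G)\to\Hom(\ug,G)$, natural in~$G$, that $\iota$ sends a clique $C$ to the constant multihomomorphism $i\mapsto C$, and that $r\circ\iota=\id$ while $\iota\circ r\ge\id$. Thus $\iota\circ r$ is a closure operator with image $\iota\bigl(\Hom(\ug,G)\bigr)$, and the standard poset homotopy arising from the map $\Hom(\looped{K_n},G)\times\{0<1\}\to\Hom(\looped{K_n},G)$, $(x,0)\mapsto x$, $(x,1)\mapsto(\iota\circ r)(x)$, is $S_n$-equivariant (with the trivial action on the $\{0<1\}$ factor) and natural in~$G$; hence $\iota$ is an $S_n$-homotopy equivalence with homotopy inverse~$r$.

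Composing the two steps shows that the map $\Hom(\ug,G)\to\Hom(K_n,G)$ induced by $K_n\to\ug$ is an $S_n$-homotopy equivalence, with homotopy inverse and homotopies natural with respect to automorphisms of~$G$ and with $\Hom(\ug,G)$ carrying the trivial action. The main (and essentially only) obstacle is the equivariance of the collapsing step, which is overcome by replacing the non-equivariant coordinate-evaluation homotopy inverse by the symmetric union map~$r$; everything else is routine bookkeeping on top of \prettyref{thm:GP}.
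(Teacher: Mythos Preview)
Your proof is correct and follows the same two-step factorization through $\looped{K_n}$ as the paper. Your symmetric retraction $r(\alpha)=\bigcup_i\alpha(i)$ is exactly precomposition with the paper's multihomomorphism $t\in\Hom(\ug,\looped{K_n})$, $t(\ast)=V(K_n)$ (the paper writes $K_n$ here, but $\looped{K_n}$ is clearly intended, since $t(\ast)$ must be a clique of looped vertices); the relations $r\circ\iota=\id$ and $\iota\circ r\ge\id$ you verify are precisely the paper's $c\circ t=\id$ and $t\circ c\ge\id$ transported to the $\Hom(\qm,G)$ level.
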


\begin{proof}
The map $K_n\to\ug$ factorizes as $K_n\to\looped K_n\to\ug$.  The
constant homomorphism $c\colon K_n\to\ug$ and the multi-homomorphism
$t\in\Hom(\ug, K_n)$, $t(\ast)=V(K_n)$, satisfy
$c\cmps t=\id\in\Hom(\ug, \ug)$ and $t\cmps c\ge\id\in\Hom(K_n, K_n)$.
\end{proof}

\begin{example}\label{ex:K2Cm}
For $m>2$ the graph~$\Cpm$ is fine.  This is easily checked
directly but also follows from \prettyref{prop:GPfine} applied to the
face poset of an $m$-gon.  Hence \prettyref{cor:Kn1} can be applied.
\prettyref{fig:Homretract} shows $\Hom(K_2, \Cpm)$ and in bold
the subposet which is the image of $\Hom({\bf 1}, \Cpm)$.  The two rows in the labels on the vertices correspond to the images of the two vertices of $K_2$.

\begin{figure}\label{fig:Homretract}.
\begin{center}
\input{Homretract.pspdftex}

\caption{Retracting $|\Hom(K_2, \Cpm)|$ onto $|\Hom({\bf 1},
\Cpm)|$.}

\end{center}
\end{figure}

\end{example}

With these preliminary results in place we now turn to the
universality theorems.  The first one is a reformulation of a result by
Csorba~\cite{Cs05} in our language.

\begin{thm}\label{thm:univ2}
Let $X$ be a finite simplicial complex with a free $\Z_2$-action.
Then $G\deq K_2\times_{\Z_2}\Chain(FX)^1$ is a loopless graph for which
$\real{\Hom(K_2, G)}\homot_{\Z_2}\real X$.
\end{thm}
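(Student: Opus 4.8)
The plan is to imitate the proof of \prettyref{thm:secondentry}, but to work directly with the underlying \prettyref{prop:bundle0} rather than quoting the theorem itself: the discontinuity hypothesis of \prettyref{thm:secondentry} fails here (the graph $\Chain(FX)^1$ is in general only $2$-discontinuous), yet the bipartiteness of $K_2$ makes the hypothesis of \prettyref{prop:bundle0} automatic, and one extra ingredient, \prettyref{cor:Kn1}, identifies $X$ inside $\Hom(K_2,\Chain(FX)^1)$. Looplessness of $G=K_2\times_{\Z_2}\Chain(FX)^1$ is immediate: $K_2\times\Chain(FX)^1$ is loopless because $K_2$ is, and a loop in the quotient would give a vertex $(a,p)$ with $(a,p)\sim(\bar a,\gamma p)$, forcing $p$ and $\gamma p$ comparable in $FX$, which is impossible for a free action ($\gamma p<p$ would imply $p<\gamma p$, and $\gamma p=p$ is excluded).

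For the homotopy statement I would run the three steps from the proof of \prettyref{thm:secondentry} with $\Gamma=\Z_2$, first coordinate $K_2$, and second coordinate $\Chain(FX)^1$. The only use of discontinuity there is to apply \prettyref{prop:bundle0} (equivalently \prettyref{cor:bundle}) to the graph $K_2\times\Chain(FX)^1$ with its diagonal $\Z_2$-action; but its hypothesis — that no vertex $v$ is joined to $\gamma v$ by a walk of length $4$ — holds for free, since such a walk projects to a walk of even length in the bipartite loopless graph $K_2$, which therefore returns to the same part, whereas $\gamma$ flips the $K_2$-coordinate. So \prettyref{cor:bundle} gives that the diagonal action on $\Hom(K_2,K_2\times\Chain(FX)^1)$ is strongly regular and free and that $\Hom(K_2,G)\isom\Hom(K_2,K_2\times\Chain(FX)^1)/\Z_2$; \prettyref{lem:bundle0'} then produces a homotopy equivalence of this quotient with $\Hom(K_2,K_2)\times_{\Z_2}\Hom(K_2,\Chain(FX)^1)$, natural in the domain $K_2$ (hence $\Aut(K_2)$-equivariant) and still carrying a strongly regular free $\Z_2$-action; and \prettyref{lem:regular} lets realization commute with this quotient. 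Thus $\real{\Hom(K_2,G)}$ is $\Aut(K_2)$-equivariantly homotopy equivalent to $\real{\Hom(K_2,K_2)}\times_{\Z_2}\real{\Hom(K_2,\Chain(FX)^1)}$.

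It remains to compute the factors and follow the actions. Since $\Hom(K_2,K_2)$ is the two-point antichain, $\real{\Hom(K_2,K_2)}=\Sphere^0$, with the structure $\Z_2$ (postcomposition, which is being quotiented out) and the $\Aut(K_2)$-action (precomposition) both acting as the transposition; hence $\Sphere^0\times_{\Z_2}Y\cong Y$ for $Y=\real{\Hom(K_2,\Chain(FX)^1)}$, and under this identification the residual $\Aut(K_2)$-action becomes the composite of the $\Aut(K_2)$-action on $Y$ with the structure-$\Z_2$-action on $Y$ (the two commute). Now $\Chain(FX)^1$ is the comparability graph of $FX$, and $FX$ is the proper part of the finite lattice obtained by adjoining an empty face below and a maximum above, so $\Chain(FX)^1$ is fine by \prettyref{prop:GPfine}; by \prettyref{cor:Kn1} (with $n=2$) there is an $\Aut(K_2)$-equivariant homotopy equivalence $\real{\Hom(\ug,\Chain(FX)^1)}\to Y$ with the $\Aut(K_2)$-action trivial on the source, natural in $\Chain(FX)^1$ and hence also equivariant for the $\Z_2$ coming from $X$. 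Finally $\Hom(\ug,\Chain(FX)^1)$ is the poset of nonempty cliques of that comparability graph, i.e. the poset $\Chain(FX)$ of chains of $FX$; being the face poset of the order complex of $FX$, its realization is canonically $\Z_2$-homeomorphic to $\real X$. Stringing these equivalences together converts the residual $\Aut(K_2)$-action into the given free $\Z_2$-action on $\real X$, yielding $\real{\Hom(K_2,G)}\simeq_{\Z_2}\real X$.

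The hard part will be this equivariance bookkeeping: keeping the two commuting $\Z_2$-actions distinct (the one quotiented out in $K_2\times_{\Z_2}\Chain(FX)^1$ versus the $\Aut(K_2)$-action that appears in the conclusion) and checking that after passing through the $\Sphere^0$-bundle and \prettyref{cor:Kn1} the surviving action is exactly the prescribed action on $X$, with the $\Sphere^0$-twist precisely converting the trivial $\Aut(K_2)$-action on $\real{\Hom(\ug,\Chain(FX)^1)}$ into the action transported from $X$. The verifications that \prettyref{cor:bundle} applies and that $FX$ is the proper part of a lattice are routine by comparison.
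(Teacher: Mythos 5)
Your proof is correct and follows essentially the same route as the paper's, which reduces to \prettyref{prop:bundle0} via the bipartiteness of $K_2$ (replacing the discontinuity hypothesis of \prettyref{thm:secondentry}) and then reruns the chain of equivalences from the proof of \prettyref{thm:univn}: Lemma~\ref{lem:bundle0'}, Lemma~\ref{lem:regular}, the $\mathbb S^0$-twist, Corollary~\ref{cor:Kn1}/Proposition~\ref{prop:GPfine}, and the identification $\Hom(\ug,\Chain(FX)^1)\isom\Chain(FX)$. You supply a bit more explicit bookkeeping (the lattice-completion of $FX$ for fineness, the two commuting $\Z_2$-actions after the $\mathbb S^0$-cancellation), but these are exactly the details the paper defers to its earlier argument.
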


\begin{rem}\label{rem:univcsorba}
We note that the vertices of $G$ correspond to vertices of the barycentric
subdivision of~$X$.  The neighborhood in~$G$ of such a vertex is composed of the vertex of the barycentric subdivision to which it is mapped under the
$\Z_2$-action, along with its neighbors there.  With this direct description of
the construction, the theorem is proved in \cite{Cs05}.
\end{rem}

We postpone the proof of \prettyref{thm:univ2}, and first establish
the following new result with $K_n$ in the place of~$K_2$.  This is
the main result of this section.

\begin{thm}\label{thm:univn}[restatement of \prettyref{thm:univintro}]
Let $X$ be a finite simplicial complex with a free $S_n$-action,
$n\ge 2$.
Then $G\deq K_n\times_{S_n}\Chain^3(FX)^1$ is a loopless graph for which
$\real{\Hom(K_n, G)}\homot_{S_n}\real X$.
\end{thm}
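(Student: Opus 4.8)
The plan is to realize the chain of $S_n$-equivalences announced in \prettyref{sec:univ},
\[
\real{\Hom(K_n,G)}\homot_{S_n}\real{\Hom(K_n,H)}\homot_{S_n}\real{\Hom(\ug,H)}\homeo_{S_n}\real X,
\]
where $H\deq(\Chain^3 FX)^1=\Chain(P)^1$ with $P\deq\Chain^2(FX)$ the face poset of the second barycentric subdivision of~$X$, and $G\deq K_n\times_{S_n}H$; the first step will come from \prettyref{thm:secondentry}, the second from \prettyref{cor:Kn1}, and the third is an iterated barycentric subdivision.  I would begin with the bookkeeping on~$H$.  The free $S_n$-action on~$X$ induces free $S_n$-actions on $FX$, on~$P$ and on~$H$; by \prettyref{lem:discont-subdiv} the action on $H=(\Chain^3 FX)^1$ is $8$-discontinuous.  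Two consequences: first, $G$ is loopless, since $K_n$ has no loops and, the action on~$H$ being in particular $2$-discontinuous, no nonidentity $\gamma\in S_n$ sends a vertex of~$H$ to one of its neighbours, so a loop at $[(i,h)]\in G$ is impossible; second, $H$ is \emph{fine}, because the face poset of the simplicial complex~$\Chain^2 FX$ together with an adjoined bottom~$\hat 0$ and top~$\hat 1$ is a finite lattice --- meets are face intersections (or~$\hat 0$), while the join of two faces is their union if that union is a face and $\hat 1$ otherwise --- with proper part~$P$, so \prettyref{prop:GPfine} applies to $H=\Chain(P)^1$.

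For the first equivalence I would apply \prettyref{thm:secondentry} with $K_n$ in the role of both the source graph and the twisted graph~$T$, with $\Chain^3(FX)$ for~$P$ and $S_n$ for~$\Gamma$.  A star is a spanning tree of~$K_n$ of diameter $d\le2$, so the hypothesis requires $\max\set{5,d+2}=5$-discontinuity of the action on~$H$, which we have.  This gives
\[
\real{\Hom(K_n,G)}\homot\real{\Hom(K_n,K_n)}\times_{S_n}\real{\Hom(K_n,H)},
\]
naturally and equivariantly for the residual $\Aut(K_n)$-action that precomposes on the source copy of~$K_n$ (acting on both factors of the product).  Now $\Hom(K_n,K_n)$ is as small as possible: every homomorphism $K_n\to K_n$ is a bijection, and every multihomomorphism $\alpha\in\Hom(K_n,K_n)$ has pairwise disjoint values~$\alpha(i)$, forcing $|\alpha(i)|=1$; hence $\Hom(K_n,K_n)$ is the discrete poset $\Aut(K_n)\cong S_n$, on which $S_n$ acts by both left and right translation, each simply transitively.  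Therefore $\real{\Hom(K_n,K_n)}\times_{S_n}Z\homeo Z$ for any $S_n$-space~$Z$, and chasing the residual action through this identification shows that it becomes, on $\real{\Hom(K_n,H)}$, the combination of the source-precomposition $\Aut(K_n)$-action with the $S_n$-action on~$H$ (a diagonal action, up to the inversion conventions inherent in such bisets).  So $\real{\Hom(K_n,G)}\homot_{S_n}\real{\Hom(K_n,H)}$ for that action.

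For the second equivalence I would use \prettyref{cor:Kn1}: since $H$ is fine, the homomorphism $K_n\to\ug$ induces an $S_n$-homotopy equivalence $\real{\Hom(\ug,H)}\homot\real{\Hom(K_n,H)}$ for the source-precomposition action (trivial on $\Hom(\ug,H)$), with homotopy inverse and homotopies natural in automorphisms of~$H$.  Combining the $\Aut(K_n)$-statement with the naturality in $\Aut(H)$ gives an equivariance for the combined action above, so restricting to it yields $\real{\Hom(K_n,H)}\homot_{S_n}\real{\Hom(\ug,H)}$ with the $H$-induced action.  Finally, $\Hom(\ug,H)=\Hom(\ug,\Chain(P)^1)$ is the poset of cliques of the comparability graph of~$P$, i.e.\ the poset $\Chain P=\Chain^3 FX$ of chains of~$P$; thus $\real{\Hom(\ug,H)}=\real{\Chain^3 FX}$, which is $S_n$-homeomorphic to~$\real X$.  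Composing the three equivalences gives the theorem, and setting $n=2$ and comparing with \prettyref{rem:univcsorba} recovers Csorba's graph.

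The step I expect to be the main obstacle is the equivariance bookkeeping in the middle paragraph: one must keep the two commuting $S_n$-actions on $\Hom(K_n,H)$ --- source precomposition versus postcomposition through the $S_n$-action on~$H$ --- carefully apart, identify precisely (with the correct inverses) the action that survives the $\times_{S_n}$-quotient of $\Hom(K_n,K_n)\times\Hom(K_n,H)$, and confirm that the naturality of \prettyref{cor:Kn1} is exactly strong enough to absorb it.  By comparison, the looplessness of~$G$ and the lattice verification feeding \prettyref{prop:GPfine} are routine.
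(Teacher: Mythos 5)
Your argument follows the same route as the paper's proof: apply \prettyref{thm:secondentry} (equivalently \prettyref{prop:bundle0} with \prettyref{lem:bundle0'}, which is how the paper phrases it) to split off $\Hom(K_n,K_n)\isom S_n$, identify the resulting twisted product with $\Hom(K_n,H)$ carrying the conjugation action $\beta\mapsto\gamma\beta\gamma^{-1}$, invoke \prettyref{cor:Kn1} together with \prettyref{prop:GPfine} to pass to $\Hom(\ug,H)\isom\Chain^3(FX)$, and recognize that as an iterated barycentric subdivision of $X$; the inversion bookkeeping you flag as the main obstacle is exactly the step the paper works out explicitly, and your informal description of the residual action (source precomposition combined with the $S_n$-action on $H$) is the correct answer. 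The only slip is tangential: the $n=2$ specialization of this statement uses $\Chain^3(FX)$ and so does not literally reproduce Csorba's graph $K_2\times_{\Z_2}\Chain(FX)^1$ --- that is \prettyref{thm:univ2}, which the paper proves separately with a $K_2$-specific parity argument in place of the discontinuity count.
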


We will need the following lemma.  Recall that the concept of $d$-discontinuity is defined
in \prettyref{def:discont}.

\begin{lem}\label{lem:discont-subdiv}
Let $P$ be a poset with a free action of a finite group~$\Gamma$,
and let $k\ge0$.
Then the induced $\Gamma$-action on $(\Chain^{k}(P))^1$ is $2^k$-discontinuous.
\end{lem}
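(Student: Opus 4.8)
The plan is to induct on $k$. For $k=0$ the graph $(\Chain^0 P)^1=P^1$ has the atoms of $P$ as vertices, and $1$-discontinuity just says that no nonidentity element of $\Gamma$ fixes an atom of $P$, which is immediate from freeness. For $k\ge 1$ I would use that $(\Chain^k P)^1=\bigl(\Chain(\Chain^{k-1}P)\bigr)^1$ is precisely the reflexive comparability graph of $Q:=\Chain^{k-1}P$: its vertices are the atoms of $\Chain Q$, i.e.\ the one-element chains $\{x\}$ with $x\in Q$, and $\{x\}\sim\{y\}$ exactly when some chain of $Q$ contains both $x$ and $y$, i.e.\ when $x,y$ are comparable. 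So it suffices to prove (i) the comparability graph of $P$ is $2$-discontinuous, and (ii) (the doubling step) for every finite poset $Q$ with a free $\Gamma$-action, if the comparability graph of $Q$ is $d$-discontinuous then that of $\Chain Q$ is $2d$-discontinuous. Granting these, and noting that $\Chain$ sends free actions to free actions (an order automorphism fixing a finite chain setwise fixes it pointwise), $k-1$ applications of (ii) to $P,\Chain P,\Chain^2 P,\dots$ upgrade (i) to: the comparability graph of $\Chain^{k-1}P$ — that is, $(\Chain^k P)^1$ — is $2^k$-discontinuous. Statement (i) is the usual cyclic-chain argument: if $q\le\gamma q$ for some $\gamma\ne e$ of order $n$, then $q\le\gamma q\le\dots\le\gamma^n q=q$ forces $q=\gamma q$, contradicting freeness (and the case $\gamma q\le q$ is symmetric), so $q$ and $\gamma q$ are distinct and incomparable.

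The substance is (ii). Suppose, for some $c\in\Chain Q$ and $\gamma\ne e$, there were a path $c=c_0,c_1,\dots,c_\ell=\gamma c$ in the comparability graph of $\Chain Q$ with $\ell\le 2d-1$; here the $c_i$ are chains of $Q$ and consecutive ones are nested. Mark each step as an up-step ($c_{i-1}\subsetneq c_i$) or a down-step, and let $p_1<\dots<p_r$ be the peaks (an up-step in, a down-step out). Consecutive peaks differ by at least $2$, and summing the lengths of the $r+1$ maximal monotone segments $[0,p_1],[p_1,p_2],\dots,[p_r,\ell]$ — the outer two of length $\ge 1$, the inner $r-1$ of length $\ge 2$ — gives $\ell\ge 2r$, so $r\le\lfloor\ell/2\rfloor\le d-1$. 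Each segment descends to a valley chain $c_{m_j}$ and re-ascends, so $c_{m_j}$ lies inside both of its flanking peak chains; picking a vertex $w_j\in c_{m_j}$, the inclusions $c_{m_{j-1}},c_{m_j}\subseteq c_{p_j}$ make $w_{j-1}$ and $w_j$ comparable in $Q$. This yields a walk $w_0,\dots,w_r$ of length $r$ in the comparability graph of $Q$ with $w_0\in c_0=c$ and $w_r\in c_\ell=\gamma c$. (If $r=0$ the path is a single valley $c_{m_0}\subseteq c\cap\gamma c$; then for any $v\in c_{m_0}$ both $v$ and $\gamma^{-1}v$ lie in the chain $c$, so they are comparable, hence at distance $\le 1<d$, already a contradiction.)

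The delicate point — and the step I expect to be the real obstacle — is to convert this into a walk between a vertex and its own $\gamma$-image without losing a constant. Writing $w_r=\gamma z$ with $z\in c$, the vertices $w_0$ and $z$ both lie in the chain $c$ and so are comparable; prepending the edge joining $z$ and $w_0$ gives a walk from $z$ to $\gamma z$ of length $\le r+1$, and $r+1\le d$ is not yet a contradiction. I would settle this by a case split on the lengths of the two outermost segments. If $[0,p_1]$ has length $1$, then $c\subseteq c_{p_1}$, so $w_0$ may be taken to be any vertex of $c$, in particular $z$, and the walk already runs from $z$ to $\gamma z$ with length $r\le d-1$. Symmetrically, if $[p_r,\ell]$ has length $1$, then $c_\ell\subseteq c_{p_r}$, so one may instead take $w_r:=\gamma w_0$, giving a walk from $w_0$ to $\gamma w_0$ of length $r\le d-1$. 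And if both outermost segments have length $\ge 2$, then $\ell\ge 2r+2$, so $r\le\lfloor\ell/2\rfloor-1$ and the length-$(\le r+1)$ walk from $z$ to $\gamma z$ has length $\le\lfloor\ell/2\rfloor\le d-1$. In every case we obtain $v\in Q$ with $\gamma\ne e$ such that $v$ and $\gamma v$ are at distance $\le d-1$ in the comparability graph of $Q$, contradicting its $d$-discontinuity; this proves (ii), and with it the lemma. Everything else — the $\Chain$/comparability-graph identifications, freeness passing through $\Chain$, and the cyclic-chain argument for (i) — is routine.
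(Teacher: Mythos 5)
Your proof is correct and follows the same high-level strategy as the paper — induction on $k$, with base cases $k=0,1$ and a doubling step showing that $d$-discontinuity on the comparability graph of a poset yields $2d$-discontinuity on that of its chain poset — but the doubling step is argued differently. The paper first normalizes a \emph{minimal} path $c_0,\dots,c_r$ so that both endpoints are singleton chains (if $c_0\ge c_1$, replace $c_0$ by $\{p\}$ for $p\in c_1$ and append $\{\gamma p\}$, preserving length), after which minimality forces a strictly alternating path $c_0<c_1>c_2<\cdots>c_r$ of even length $r=2s$; the $s+1$ valleys then give a walk of length exactly $s$ from $c_0$ to $\gamma c_0$ in the smaller comparability graph, so $s\ge d$ and $r\ge 2d$, with no endpoint fuss. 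You instead take an arbitrary short path, extract a length-$r$ walk from the $r+1$ valleys of its raw peak structure, and then repair the fact that the walk's endpoints $w_0\in c$ and $w_r\in\gamma c$ need not lie in one $\gamma$-orbit via the three-way case split on the lengths of the two outermost inter-peak segments. Both are sound: the paper's normalization eliminates the endpoint mismatch at the outset (and forces $r$ even, streamlining the counting), while yours trades the replacement-plus-minimality argument for the case analysis and the "prepend one edge" step. One cosmetic slip worth fixing: the segments $[0,p_1],[p_1,p_2],\dots,[p_r,\ell]$ are \emph{not} maximal monotone segments (each interior one descends to a valley and re-ascends); you mean the inter-peak segments. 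The length bound $\ell\ge 2r$ and the subsequent arithmetic are nevertheless correct.
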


\begin{proof}
The proof is by induction on~$k$.
If the action is free on $P$ then it is also free on $P^1$, and is therefore is $1$-discontinuous.

Note that for a poset $P$, the graph $\Chain(P)^1$ can be regarded as
the comparability graph of~$P$.  Let $p\in P$ and $\gamma\in\Gamma$.
If $\gamma p\le p$ then since~$\Gamma$ is finite it follows that $\gamma p = p$.  Hence if the action on~$P$ is free, we see that the action on $\Chain(P)^1$ is $2$-discontinuous.

Next we show that if the action on $\Chain^k(P)^1$ is $d$-discontinuous for $k\ge1$
then the action on $\Chain^{k+1}(P)^1$ is $2d$-discontinuous.  Let $c_0,c_1,\dots,c_r$ be a
finite sequence of chains in~$\Chain^{k-1}(P)$, which represents a path in
$\Chain^{k+1}(P)^1$ from $c_0$ to~$c_r=\gamma c_0$ with $\gamma\ne e$, and
is of minimal length among such paths.  If $c_0\ge c_1$ then for any
$p\in c_1$ we have $\gamma p\in c_r$ and $\set p,
c_2,\dots,c_r,\set{\gamma p}$ is a path of the same length.  The same
construction can be applied if $c_{r-1}\le c_r$.  Together with
minimality we can therefore assume that
$c_0<c_1>c_2<\dots>c_{r-2}<c_{r-1}>c_r$, and in particular that $r=2s$ for
some integer~$s$.  Furthermore, by picking elements we can assume that
all of the $c_{2i}$ are singletons. Then $c_0,c_2,\dots,c_{2s}$ is a path in
$\Chain^k(P)^1$ from $c_0$ to~$\gamma c_{2s}$.  Hence $s\ge d$ and $r\ge2d$.
\end{proof}

\begin{proof}[Proof of \prettyref{thm:univn}]
Since $K_n$ is loopless and
since the action on $X$ (and hence that on $\Chain^3(FX)^1$) is free,
the graph $K_n\times_{S_n}\Chain^3(FX)^1$ is loopless.
From
\prettyref{prop:bundle0} we get
\[\Hom(K_n, K_n\times_{S_n}\Chain^3(FX)^1)\isom
   \Hom(K_n, K_n\times \Chain^3(FX)^1)/S_n,\]
provided that no two vertices in the same $S_n$-orbit of
$ K_n\times \Chain^3(FX)^1$ are connected by a path of length $3$ or~$4$.  Since the action on~$X$ is free,  all such paths have at least length~$8$ by
\prettyref{lem:discont-subdiv}.
Now by \prettyref{lem:bundle0'},
\begin{equation}\label{eq:univnb}
\Hom(K_n, K_n\times \Chain^3(FX)^1)/S_n\homot
\Hom(K_n,K_n)\times_{S_n}\Hom(K_n,\Chain^3(FX)^1).
\end{equation}
Both equivalences
are natural with respect to automorphisms of the~$K_n$ in coordinate, so we obtain
\begin{equation}\label{eq:univnc}
\varphi\colon \Hom(K_n, K_n\times_{S_n}\Chain^3(FX)^1)
\homot_{S_n}\Hom(K_n,K_n)\times_{S_n}\Hom(K_n,\Chain^3(FX)^1),
\end{equation}
\noindent
where the $S_n$-action on the right hand side is obtained by acting
simultaneously
on the first variables of both factors, i.e.
$\gamma[\alpha,\beta]=[\gamma\alpha, \gamma\beta]$
with $\gamma\in S_n$, $\alpha\in\Hom(K_n,K_n)$,
$\beta\in\Hom(K_n,\Chain^3(FX)^1)$.
Also note that the equivalence relation
on the right hand side is $[\alpha,\beta]=[\alpha\gamma,\beta\gamma]$ with
$(\beta\gamma)(v)=\gamma^{-1}\beta(v)$.
Since $\Hom(K_n, K_n)\isom S_n$,
where $S_n$ is regarded as a trivial poset with $n!$ disjoint elements, we have
\[\rho\colon
\Hom(K_n,K_n)\times_{S_n}\Hom(K_n,\Chain^3(FX)^1)\isom
\Hom(K_n,\Chain^3(FX)^1)\] with $\rho^{-1}(h)=[\id,h]$.  Now
$\rho(\gamma [\id,\beta])=\rho([\gamma\id,\gamma
  \beta])=\rho([\id\gamma, \gamma\beta])=\rho([\id,\gamma\beta\gamma^{-1}])$ and
therefore $\rho(\gamma x)=\gamma\cdot\rho(x)\cdot \gamma^{-1}$ for all~$x$.
So $(\gamma,\beta)\mapsto \gamma\beta\gamma^{-1}$ is the $S_n$-action on
$\Hom(K_n,\Chain^3(FX)^1)$ that we will have to keep track of.\sloppy

Finally, by \prettyref{cor:Kn1} and \prettyref{prop:GPfine}, we have
\[\real{\Hom(K_n,\Chain^3(FX)^1)}\homot_{S_n\times S_n}
\real{\Hom(\ug, \Chain^3(FX)^1)}.
\]
If $u$ is the unique vertex of~$\ug$ and
$\beta\in \Hom(\ug, \Chain^3(FX)^1)$, then
$(\gamma\beta\gamma^{-1})(u)=\gamma\cdot\beta(u\gamma)=\gamma\cdot\beta(u)$.
Since, with only slight abuse of notation
(identify a vertex of $\Chain^3(FX)^1$ with an element of
$\Chain^2(FX)$), $\beta\mapsto \beta(u)$ is an
isomorphism
\[\Hom(\ug, \Chain^3(FX)^1)\xto\isom\Chain^3(FX),\]
we obtain an $S_n$-homotopy equivalence
\[\real{\Hom(K_n, K_n\times_{S_n}\Chain^3(FX)^1)}
\homot_{S_n}
\real{\Chain^3(FX)},\]
and since $\Chain^3(FX)$ is the face poset of an iterated barycentric
subdivision of~$X$, the result follows.
\end{proof}

\begin{rem}
Following the maps in the proof, one sees that the map
that induces the homotopy equivalence is
\begin{align*}
\Chain^3(FX)&\to \Hom(K_n, K_n\times_{S_n}\Chain^3(FX)^1),\\
M&\mapsto\left(u\mapsto \set{[(u,v)]\colon v\in
  M}\right).
\end{align*}
Here we again regard vertices of $\Chain^3(FX)^1$ as elements
of~$\Chain^2(FX)$ (instead of singletons), so that an element of
$\Chain^3(FX)$ is a set of vertices of~$\Chain^2(FX)^1$.  It is also
easy to see that this map is equivariant, since for~$\gamma\in S_n$ we
have $\set{[(u,v)]\colon v\in \gamma M}=
\set{[(u,\gamma v)]\colon  v\in M}
=\set{[(u \gamma,v)]\colon v\in M}$.
\end{rem}

\begin{rem}
Since every element of $\Hom(K_n, K_n)$ is a true graph homomorphism, the
equivalence in \prettyref{eq:univnb} and hence in
\prettyref{eq:univnc} is actually an isomorphism.
\end{rem}

\begin{proof}[Proof of \prettyref{thm:univ2}]
Again, the graph $G$ is loopless, since $K_2$ is loopless and the
action on~$X$ is free (see also \prettyref{rem:univcsorba}).
Since there is no path of even length from one vertex of~$K_2$ to the other,
there is no path of even length from a vertex of $K_2\times\Chain(FX)^1$
to the other vertex of its orbit under the diagonal $\Z_2$-action.  Hence
\prettyref{prop:bundle0} can be applied to obtain
\[\Hom(K_2, K_2\times_{\Z_2}\Chain(FX)^1)
\isom
\Hom(K_2, K_2\times\Chain(FX)^1)/\Z_2.\]
The rest of the proof now proceeds exactly as for \prettyref{thm:univn}.  In short, we have
\begin{align*}
\real{\Hom(K_2, K_2\times_{\Z_2}\Chain(FX)^1)}
&\homot_{\Z_2} \real{\Hom(K_2, K_2)\times_{\Z_2}\Hom(K_2, \Chain(FX)^1)}\\
&\homeo_{\Z_2} \real{\Hom(K_2, \Chain(FX)^1)}\\
&\homot_{\Z_2} \real{\Hom({\bf 1}, \Chain(FX)^1))}\\
&\homeo_{\Z_2}\real{X}.
\end{align*}
For details on the various actions of~$\Z_2$ we refer to
the proof of \prettyref{thm:univn}.
\end{proof}

\section{Further comments -- enriched categories}\label{sec:enrich}
One can ask is if the homotopy equivalence of posets described in \prettyref{thm:firstentry} can be seen as a true adjointness statement between the functors $\Hom(T,\qm)$ and $T\times\qm^1$.  It turns out that this is indeed the case and we sketch the construction here, focussing our attention on the non-equivariant situation ($\Gamma$ taken to be trivial).  For this it will be convenient to use the language of \emph{enriched category theory}, and we refer to \cite{Kel} for undefined terms.  The basic idea behind enriched category theory is to identify (in a natural way) the set of morphisms between any two objects in a certain category with an object in some other (monoidal) category.

As we have seen, $\Hom(G,H)$ is a way to assign a poset structure to the set of homomorphisms between graphs $G$ and $H$.  To correctly understand this in the context of an enriched category, we have to work with the following slightly modified category of posets, one which mimics the category of simplicial complexes.

\begin{defn}
Define $\mathcal P_0$ to be the category whose objects are finite
posets~$P$ with the property that for every element $x \in P$, the set
$\{y \in P\colon y \leq x, y \textrm{ is an atom}\}$ has a least upper
bound.  The morphisms of ${\mathcal P_0}$ are equivalence classes of
order-preserving maps $f\colon P
\rightarrow Q$ which take atoms to atoms, under the equivalence
relation $f \sim g$ if $f$ and $g$ agree on atoms.
\end{defn}

Note that the existence of least upper bounds implies that each
equivalence class~$[f]$ has a minimal element when regarded as a
subposet of $\Poset(P,Q)$.  Therefore, if $[f]=[g]$ then there is an
$h\in [f]$ such that $f\ge h\le g$ and hence
$\real f\homot\real h\homot\real g\colon\real P\to\real Q$.
We conclude that geometric realization $|\qm|$ is a well defined functor from ${\mathcal P_0}$ to $\mathcal{HTOP}$, the homotopy category of topological spaces.

Also note that for each poset $P \in \mathcal P_0$ we obtain a
simplicial complex $S(P)$ whose faces are given by all bounded sets of
atoms of $P$.  Similarly, the face poset $F(X)$ of a simplicial
complex $X$ gives an object in ${\mathcal P_0}$.  These two functors
determine an equivalence of categories.  Indeed, we have $S \circ F =
\id_{\mathcal S}$ and a natural isomorphism $(F \circ
S) \rightarrow \id_{\mathcal P}$.  One advantage of viewing the
category in terms of $\mathcal P_0$ is the following fact, whose proof
we leave to the reader.

\begin{lem}
There exists a categorial product $P \times Q$ in ${\mathcal P_0}$, given by the usual product of posets.  In particular the elements of $P \times Q$ are given by the underlying set $P \times Q$ with relation $(p,q) \leq (p^\prime, q^\prime)$ if $p \leq p^\prime$ and $q \leq q^\prime$.
\end{lem}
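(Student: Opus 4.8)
The plan is to check, for the poset $P\times Q$ equipped with the coordinatewise order, the two defining properties of a categorical product: that it is again an object of $\mathcal P_0$, and that the two coordinate projections, viewed as morphisms of $\mathcal P_0$, enjoy the requisite universal property. Everything rests on two elementary facts about ordinary products of posets. First, the atoms of $P\times Q$ (that is, its minimal elements) are exactly the pairs $(p,q)$ with $p$ an atom of $P$ and $q$ an atom of $Q$. Second, upper bounds in a product poset are taken coordinatewise: a pair $(u,v)$ is an upper bound of a subset of the form $A\times B$ if and only if $u$ is an upper bound of $A$ and $v$ one of $B$, so that the least upper bound of $A\times B$, when it exists, is $\bigl(\bigvee A,\bigvee B\bigr)$.

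First I would verify $P\times Q\in\mathcal P_0$, the empty case being trivial so that we may assume $P,Q$ nonempty. Since the posets are finite, every element has an atom below it, and one checks directly that $(p,q)$ is minimal in $P\times Q$ precisely when $p$ is minimal in $P$ and $q$ in $Q$. Hence, writing $A_x$ for the set of atoms of $P$ below $x$ and $A_y$ for the set of atoms of $Q$ below $y$, the atoms of $P\times Q$ below a given element $(x,y)$ form the nonempty set $A_x\times A_y$, whose least upper bound, by the coordinatewise observation, is $\bigl(\bigvee A_x,\bigvee A_y\bigr)$; the two joins exist because $P,Q\in\mathcal P_0$. Thus every element of $P\times Q$ admits the least upper bound over the atoms beneath it that is required by the definition of $\mathcal P_0$.

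Next I would establish the universal property. The projections $p_1\colon P\times Q\to P$ and $p_2\colon P\times Q\to Q$ are order-preserving and, by the atom description above, send atoms to atoms, so they determine morphisms $[p_1],[p_2]$ of $\mathcal P_0$. Given an object $R$ with morphisms $[f]\colon R\to P$ and $[g]\colon R\to Q$, choose representatives $f,g$ and put $h(r)=(f(r),g(r))$; this map is order-preserving and atom-preserving, hence a morphism $[h]\colon R\to P\times Q$, and $p_1\circ h=f$, $p_2\circ h=g$ literally, so $[p_1]\circ[h]=[f]$ and $[p_2]\circ[h]=[g]$. For uniqueness, if $[h']$ also satisfies these two equations then, on every atom $a$ of $R$, the relations $p_1(h'(a))=f(a)$ and $p_2(h'(a))=g(a)$ force $h'(a)=(f(a),g(a))=h(a)$; so $h'$ and $h$ agree on atoms and $[h']=[h]$. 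The same computation shows that $[h]$ does not depend on the chosen representatives of $[f]$ and $[g]$.

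I do not expect a genuine obstacle here: the argument is purely formal. The only points needing a moment's care are the coordinatewise-join observation, which is exactly what keeps $\mathcal P_0$ closed under products, and the bookkeeping with equivalence classes of maps — namely that composition descends to classes and that the mediating morphism is well defined — both of which become automatic once one uses that a morphism of $\mathcal P_0$ is completely determined by its restriction to atoms, together with the fact that the atoms of $P\times Q$ are the pairs of atoms of $P$ and $Q$.
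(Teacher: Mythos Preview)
Your proposal is correct and complete. The paper does not actually supply a proof of this lemma---it explicitly leaves it to the reader---so there is no approach to compare against; your argument is exactly the routine verification one expects, and the two points you flag as needing care (the coordinatewise description of atoms and joins in $P\times Q$, and the well-definedness of composition and of the mediating morphism on equivalence classes) are precisely the places where something could go wrong, and you handle them correctly.
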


One can check that the category $\mathcal P_0$ is a \emph{cartesian category}, with final object given by the poset ${\textbf 1}$
consisting of a single element.  In particular, this gives $\mathcal P_0$ the structure of a \emph{symmetric monoidal category} (with product given by the categorical product), which we call $\mathcal P$.
The category $\mathcal P$ is also \emph{closed} in the sense that the product has a right
adjoint, provided by the following construction.  For posets $Q$ and $R$ in ${\mathcal P}$ we define $[Q,R]
\deq \Poset(Q,R)$ to be the poset of \emph{all} order-preserving maps
$f:Q \rightarrow R$ with relation $f \leq g$ if $f(x) \leq g(x)$ for
all $x \in Q$.  We then have the following adjunction, whose proof we again leave to the reader.

\begin{lem}\label{lem:Pclosed}
For any $P$, $Q$, and $R$ in ${\mathcal P}$ there exists a natural
bijection of sets
\[{\mathcal P_0}(P \times Q, R) \isom {\mathcal P_0}(P, [Q,R]).\]
\end{lem}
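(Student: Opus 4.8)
The plan is to deduce the bijection from the classical currying isomorphism for ordinary posets, and then to control the two features that distinguish $\mathcal{P}_0$ from $\Poset$: that morphisms are taken only modulo agreement on atoms, and that they must send atoms to atoms.

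First I would recall that for arbitrary finite posets the currying map
\[
\lambda\colon \Poset(P\times Q,R)\longrightarrow \Poset\bigl(P,\Poset(Q,R)\bigr),\qquad (\lambda f)(p)(q)=f(p,q),
\]
is an isomorphism of posets, natural in $P$, $Q$ and $R$, with inverse $g\mapsto\bigl((p,q)\mapsto g(p)(q)\bigr)$. Since the atoms of $P\times Q$ are exactly the pairs $(a,b)$ with $a$ an atom of $P$ and $b$ an atom of $Q$, a morphism $[f]\in\mathcal{P}_0(P\times Q,R)$ is the same thing as a rule assigning an atom of $R$ to each such pair that extends to an order-preserving map $P\times Q\to R$, and similarly on the other side; so the lemma reduces to showing that $\lambda$ and $\lambda^{-1}$ carry the maps representing a $\mathcal{P}_0$-morphism on the two sides onto one another and respect the relation ``agree on atoms'', naturality then being inherited from that of $\lambda$.

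The technical heart is an analysis of the atoms, and of the bounded families of atoms, of $[Q,R]=\Poset(Q,R)$. I would prove that a minimal element $h$ of $\Poset(Q,R)$ is precisely an order-preserving map sending each atom of $Q$ to an atom of $R$ with $h(q)$ a minimal upper bound of $\{\,h(b)\colon b\le q,\ b\text{ an atom}\,\}$ for every $q$, and, using the least-upper-bound property in the definition of $\mathcal{P}_0$ applied to $R$, that every $\sim$-class of order-preserving maps $P\times Q\to R$ sending atoms to atoms has a representative $\widehat f$ with $\widehat f(p,q)=\bigvee\{\,f(a,b)\colon a\le p,\ b\le q,\ a,b\text{ atoms}\,\}$, for which $\lambda\widehat f$ sends atoms of $P$ to atoms of $\Poset(Q,R)$; dually, an atom-preserving $g\colon P\to\Poset(Q,R)$ uncurries to an atom-preserving $\lambda^{-1}g$. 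Granting these, $[f]\mapsto[\lambda\widehat f]$ and $[g]\mapsto[\lambda^{-1}g]$ are well defined on $\sim$-classes and mutually inverse. A cleaner organisation of the same content is to use the equivalence of $\mathcal{P}_0$ with finite simplicial complexes: then $P\times Q$ corresponds to the categorical product $S(P)\times S(Q)$ (a set of pairs of atoms is bounded in $P\times Q$ iff both projections are bounded), one checks directly that simplicial complexes with the categorical product are cartesian closed, with $Z^Y$ having the simplicial maps $Y\to Z$ as vertices and a set $\{h_1,\dots,h_k\}$ of them as a simplex exactly when $h_1(\tau)\cup\dots\cup h_k(\tau)\in Z$ for every $\tau\in Y$, and then one must only match this internal hom up with $\Poset(Q,R)$.

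The step I expect to be the main obstacle is exactly this matching: relating the minimal order-preserving maps $Q\to R$ (the atoms of $\Poset(Q,R)$), and boundedness of a family of them, to the simplicial-map data that the closed structure produces. This is where the least-upper-bound hypothesis built into the definition of $\mathcal{P}_0$ is genuinely needed --- to produce the order-preserving extension $q\mapsto\bigvee\{\,h(b)\colon b\le q\text{ an atom}\,\}$ of a simplicial map and to verify it is minimal --- and care is required because, absent such a hypothesis, a minimal order-preserving map into $R$ need not be pinned down by its restriction to the atoms of $Q$. Once the matching is nailed down the remaining verifications are routine but tedious, which is presumably why the authors leave it to the reader.
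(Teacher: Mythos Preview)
The paper does not actually prove this lemma: it explicitly says ``whose proof we again leave to the reader.'' So there is no argument in the paper to compare your proposal against. Your outline is correct and identifies precisely the points that require work, namely that $[Q,R]=\Poset(Q,R)$ is again an object of~$\mathcal P_0$ (characterising its atoms and verifying the least-upper-bound condition) and that the classical currying bijection respects both the atom-preservation requirement and the equivalence relation of agreeing on atoms; your alternative route via the equivalence with finite simplicial complexes is equally valid and arguably cleaner, and your closing remark that this is ``presumably why the authors leave it to the reader'' is on the mark.
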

This gives the category ${\mathcal P}$ the structure of a $\mathcal
P$-category by setting $\mathcal P(P,Q)\deq[P,Q]$.

Returning now to our situation, if $P$ is an object of~$\mathcal P_0$
and $T$ and $G$ are finite graphs, then the proof of
\prettyref{thm:firstentry} provides a natural isomorphism
\begin{equation}\label{eq:adj-v0}
\Poset \bigl(P, \Hom(T,G)\bigr) \isom
\Hom\bigl(T \times P^1, G\bigr),
\end{equation}
in the category ${\mathcal P_0}$.  There are now two ways to interpret this as
an adjunction of functors.  For the stronger statement we first note that we
can obtain a $\mathcal P$-category $\mathcal G$ with objects finite graphs
and $\mathcal G(G,H)=\Hom(G,H)$.  We then see that $\Hom(T,\qm)\colon\mathcal G\to\mathcal P$ and
$T\times\qm^1\colon\mathcal P\to\mathcal G$ become $\mathcal P$-functors,
and \prettyref{eq:adj-v0} an adjunction
\begin{equation*}
\mathcal P\big(P,\Hom(T,G)\big)\isom\mathcal G \big(T\times P^1,G \big)
\end{equation*}
of $\mathcal P$-functors.  To obtain an adjunction of ordinary
functors, we apply the `underlying elements' functor $\mathcal P_0(\mathbf 1,\qm)\colon
\mathcal P_0\to\mathcal{SET}$ to this isomorphism.  As a special case
of \prettyref{lem:Pclosed} we obtain
$\mathcal P_0(\mathbf 1,\Poset(P,Q))\isom\mathcal P_0(P,Q)$.
On the other hand, if we denote by $\mathcal G_0$ the category of
finite graphs and graph homomorphisms, we have
$\mathcal P_0 \big(\mathbf 1, \Hom(H, G)\big)\isom\mathcal G_0(H,G)$, since the atoms of
$\Hom(H,G)$ are the graph homomorphisms from $H$ to~$G$.
Therefore we obtain
\begin{equation*}
{\mathcal P_0}\big(P, \Hom(T,G)\big) \isom {\mathcal G_0}\big(T \times P^1, G\big),
\end{equation*}
an adjunction between the (ordinary) functors
$\Hom(T, \qm)\colon\mathcal G_0\to\mathcal P_0$ and
$T\times\qm^1\colon\mathcal P_0\to\mathcal G_0$.

\bibliographystyle{plain}
\bibliography{litgraph}

\end{document}